\documentclass[11pt,reqno]{amsart}
\usepackage{lmodern} 
\usepackage{amsmath,amssymb,amsfonts,amsthm}
\usepackage{graphicx,multirow}
\usepackage{tikz}
\usetikzlibrary{calc, positioning, arrows.meta, chains}
\usepackage{booktabs} 
\usepackage{float}
\usepackage [numbers,sort&compress]{natbib}
\usepackage{caption}
\usepackage{array}      
\usepackage{cellspace}  
\usepackage{mathrsfs}   
\usepackage[all]{xy}    
\usepackage{xcolor} 
\usepackage{cancel} 
\usepackage{url}        
\usepackage{longtable}
\usepackage{arydshln} 
\usepackage{enumitem}
\usepackage{hyperref}

\def\Der{{\rm \textup{Der}}}
\def\Inn{{\rm \textup{Inn}}}

\def\ad{{\rm \textup{ad}}}

\theoremstyle{plain}
\newtheorem{theorem}{Theorem}[section]

\newtheorem{proposition}[theorem]{Proposition}

\theoremstyle{definition}
\newtheorem{definition}[theorem]{Definition}
\newtheorem{example}[theorem]{Example}
\newtheorem{remark}[theorem]{Remark}

\newtheorem{claim}[theorem]{Claim}

\begin{document}

\title[]{On Lie Algebras with Only Inner Derivations}

\author[Bakhrom Omirov]{Bakhrom Omirov}
\address{Bakhrom A. Omirov \newline \indent
Institute for Advanced Study in Mathematics,
Harbin Institute of Technology, Harbin 150001 \newline \indent
Suzhou Research Institute, Harbin Institute of Technology, Harbin  215104, Suzhou, China}
\email{{\tt omirovb@mail.ru}}

\author[Jie Ruan]{Jie Ruan}
\address{Jie Ruan \newline \indent School of Mathematics and Statistics,
Northeast Normal University, Changchun,  130024, China.}
\email{{\tt ruanriss@126.com}}


\begin{abstract} This paper is devoted to the study of non-semisimple Lie algebras of the form $\mathcal{L} = \mathcal{S} \ltimes \mathcal{N}$ whose derivations are all inner. By generalizing the methods of Sato and Angelopoulos, we introduce new families of Lie algebras and establish the vanishing of their first adjoint cohomology. As an application, we construct a family of complete non-perfect Lie algebras, thereby providing examples that yield a positive answer to Carles’ question on the existence of such algebras. In addition, we reduce the dimension of known examples of perfect Lie algebras with non-trivial center and only inner derivations to $31$.

Furthermore, we employ the Hochschild--Serre factorization theorem to analyze the second adjoint cohomology groups, providing insights non-vanishing of the second adjoint cohomology groups for the algebras obtained through the paper. 

\end{abstract}

\thanks{\textit{MSC2020:} 17B10, 17B30, 17B40, 17B56, 17-08}
\thanks{\textit{Key words:} Lie algebra; nilpotent Lie algebra; derivation; complete Lie algebra; sympathetic Lie algebra; Hochschild–Serre factorization; irreducible module.}

\maketitle


\renewcommand{\arraystretch}{1.2}

\section{Introduction}

The systematic study of derivations in abstract algebra with investigation of algebraic operators capturing the formal properties of differentiation was initiated in \cite{Jac-37}. It is well known that the exceptional simple Lie algebras $F_4$ and $G_2$ arise as the derivation algebras of the Cayley algebra and the exceptional Jordan algebra, respectively \cite{Che-50,Jac-39}. More generally, the structure of all exceptional Lie algebras is closely related to derivations of suitable non-associative algebras. Moreover, derivations admit a natural interpretation as infinitesimal automorphisms \cite{Hoc-42}. More precisely, a derivation $d$ can be viewed as the tangent vector at the identity of a one-parameter group of automorphisms $(h_t)$, related by the exponential map $h_t=\exp(td)$. The condition that $h_t$ preserves the algebraic product, $h_t(x\cdot y)=h_t(x)\cdot h_t(y)$, is equivalent, at first order in $t$, to the Leibniz rule
\[
d(x\cdot y)=d(x)\cdot y + x\cdot d(y).
\]
Inner derivations then form precisely the Lie algebra corresponding to the normal subgroup of inner automorphisms. These connections have significantly stimulated the study of derivations in Lie theory.

The structure of the derivation algebra $\Der(\mathcal{L})$ reflects deep properties of the Lie algebra $\mathcal{L}$. It is a classical result that a finite-dimensional Lie algebra over a field of characteristic zero is simple if and only if its derivation algebra is simple \cite{jaco62}. This result has recently been extended to arbitrary dimension over general fields \cite{FeSi-26}. Furthermore, it is proved that the existence of a non-singular derivation implies nilpotency \cite{Jac-55}, while the converse fails \cite{Dix-57}. This observation led to the introduction of characteristically nilpotent Lie algebras, that is, nilpotent Lie algebras whose derivations are all nilpotent.

Derivations of the nilradical play a fundamental role in the structure theory of solvable Lie algebras. In particular, any solvable (non-nilpotent) Lie algebra can be reconstructed from its nilradical (which are never characteristically nilpotent) together with special types of derivations \cite{Mub-63}.

A classical problem in Lie theory is the characterization of Lie algebras all of whose derivations are inner. While every semisimple Lie algebra has this property and every nilpotent Lie algebra admits a non-trivial outer derivation \cite{Jac-55}, the solvable case is considerably more delicate. Recent progress in this direction has been made in \cite{Omi-26}. By Levi's decomposition, this problem naturally reduces to studying extensions of solvable radicals by semisimple Lie algebras, where representation-theoretic methods play a crucial role. In particular, since the representation theory of $\mathfrak{sl}_2$ is completely understood, it is natural to focus on the case where the Levi factor is $\mathfrak{sl}_2$. Nevertheless, determining whether $\Der(\mathcal{L})=\Inn(\mathcal{L})$ for a general Lie algebra $\mathcal{L}$ remains a difficult problem.

In this context, G. Leger has posed the question of whether there exists a Lie algebra with only inner derivations and non-trivial center \cite{Leg-63}. This question was answered affirmatively by E. Luks, who constructed a $58$-dimensional example \cite{Luk-70}. Subsequently, T. Sato exhibited a Lie algebra of the form $\mathcal{L}=\mathfrak{sl}_2 \ltimes \mathcal{N}$ satisfying $\Der(\mathcal{L})=\Inn(\mathcal{L})$ \cite{Sato-71}. Such examples are of particular interest, as they provide non-semisimple Lie algebras with vanishing first cohomology. 

Motivated by properties of semisimple Lie algebras, several constructions have been proposed. In particular, Angelopoulos  introduced the notion of \emph{sympathetic} Lie algebras, characterized by properties analogous to semisimplicity, such as trivial center, completeness, and vanishing first cohomology, and provided explicit constructions \cite{An-1988}. Later,  Benayadi refined this approach using  isomorphic copies of irreducible $\mathfrak{sl}_2$-modules, obtaining a $25$-dimensional example \cite{bena96}. Their constructions relies on identifying $\mathfrak{sl}_2$ and the nilradical with polynomial algebras of the form $\mathbb{C}[u,w]$, allowing explicit computation of multiplication tables \cite{ABBBP-1992}.

Lie algebras of the form $\mathcal{S}\ltimes \mathcal{N}$ with only inner derivations are relatively rare, and their classification remains largely open. In particular, a question raised by R. Carles (see \cite{Car-84}) concerning the existence of complete non-perfect Lie algebras highlights that the structure theory of such algebras is still far from being fully understood. 

Recall, the second adjoint cohomology group governs infinitesimal deformations of an algebra and provides a criterion for its rigidity \cite{Campoamor-Stursberg, Gru-88, GO-93}. In particular, it parametrizes infinitesimal deformations, and its vanishing implies that all formal deformations are trivial up to equivalence. From a geometric viewpoint, $n$-dimensional Lie algebra structures form an affine algebraic variety $\mathcal{L}_n$ endowed with a natural $\mathrm{GL}_n$-action, where orbits correspond to isomorphism classes and degenerations correspond to Zariski closures of orbits. The relationship between deformations and degenerations has been clarified in \cite{Lauret, Weimar}.

Thus, alongside the study of the first cohomology group, it is natural to investigate the second cohomology group. Regarding this, the adjoint second cohomology of Benayadi's example was computed in \cite{Burde24}, where it was shown to be one-dimensional.

The main objective of this paper is to  construct new families of Lie algebras of the form $\mathfrak{sl}_2 \ltimes \mathcal{N}$ that admit only inner derivations. Generalizing Sato’s construction, we introduce a class of generalized \emph{quasi-cyclic} nilpotent Lie algebras, that is, Lie algebras admitting a decomposition $\mathcal{N}=\bigoplus \mathcal{U}^i$ with $\mathcal{U}^i=[\mathcal{U}^i,\mathcal{U}^1]$ and $\mathcal{U}^1$ generating $\mathcal{N}$ (see \cite{Leg-63}). We combine this class with $\mathfrak{sl}_m$-actions for arbitrary $m\geq 2$. Furthermore, we apply the Hochschild--Serre factorization theorem to describe their second adjoint cohomology. We also adapt constructions of sympathetic Lie algebras to obtain new families with only inner derivations, thereby providing a positive answer to Carles's question on the existence of complete non-perfect Lie algebras of the form $\mathcal{S}\ltimes \mathcal{N}$. Finally, we present an additional construction, distinct from that of Sato, yielding perfect Lie algebras with non-trivial center and only inner derivations.

The paper is organized as follows. In Section~\ref{sec2}, we generalize Sato's construction by introducing quasi-cyclic algebras $\mathfrak{N}(n)$ and constructing the class $\mathcal{GN}(a,b)$. We prove that the semidirect products $\mathfrak{sl}_2 \ltimes \mathcal{GN}(a,b)$ and their extensions admit only inner derivations (Theorem~\ref{thm2.9}), and we discuss higher-rank analogues $\mathfrak{sl}_m \ltimes \mathcal{N}$ for $m\geq 3$.

Section~\ref{sec3} is devoted to the second adjoint cohomology of algebras $\mathfrak{sl}_2 \ltimes \mathcal{GN}(a,b)$. In particular, we show that $\dim H^2(\mathcal{L},\mathcal{L})=2$ for $\mathcal{L}=\mathfrak{sl}_2 \ltimes \mathcal{GN}(1,1)$ (Proposition~\ref{prop3.3}) and establish non-vanishing results for the family $\mathfrak{sl}_2\mathcal{G}(a_i,b_i)$ (Theorem~\ref{thm3.4}).

In Section~\ref{sec4}, we extend the construction of Angelopoulos to arbitrary collections (of size at least five) of irreducible $\mathfrak{sl}_2$-modules and prove the non-vanishing of the second adjoint cohomology for these algebras (Theorem~\ref{thm4.3}). We also construct the first known examples of complete but non-perfect Lie algebras (Examples~\ref{exam4.4}, ~\ref{exam4.7} and Theorem~\ref{thm4.5}), and reduce the dimension of known examples of perfect Lie algebras with non-trivial center and only inner derivations from $41$ and $58$ to $31$ (Theorem~\ref{thm4.7}). A summary of these constructions is presented in Table~\ref{tab1}.

Throughout the paper, all Lie algebras are assumed to be finite-dimensional over $\mathbb{C}$.

\section{Construction of generalized Sato-type Lie algebras}\label{sec2}

In this section, motivated by Sato’s example, we construct a family of quasi-cyclic nilpotent Lie algebras by introducing inductively on its  natural gradation the Lie brackets. This leads to the algebra $\mathfrak{N}(n)$ for odd $n$, which is then extended via an $\mathfrak{sl}_2$-action to obtain generalized quasi-cyclic Lie algebras $\mathcal{GN}(a,b)$. We further investigate their structure and derivations, showing in particular that the corresponding semidirect products admit only inner derivations. For background on Lie algebras results, as well as on complete Lie algebras, we refer the reader to \cite{Hum-78, jaco62, Men-94, Men-95, Men-96} and the references therein.

We construct a nilpotent Lie algebra $\mathcal{N}$ by introducing a graded decomposition associated with its lower central series. Namely, for $3 \leq k \leq n$, the components
$\mathcal{U}^k$ are defined as follows:
\[
\begin{array}{llll}
\mathcal{U} = \mathcal{V}_1 \cup \mathcal{W}_1, 
& \mathcal{U}^2 = \mathcal{V}_2, 
& \mathcal{U}^k = \mathcal{V}_k \cup \mathcal{W}_k, \\[2mm]
\mathcal{V}_1 = \{x_1, x_2\}, 
& \mathcal{V}_2 = \{c, z_1, z_2, z_3\}, 
& \mathcal{V}_k = \{x_1^k, x_2^k, \dots, x_{k+1}^k\}, \\[2mm]
\mathcal{W}_1 = \{y_1, y_2\}, 
& \mathcal{W}_k = \{y_1^k, y_2^k, \dots, y_{k+1}^k\}.
\end{array}
\]

The Lie bracket on $\mathcal{N}$ is defined inductively by 
\begin{equation}\label{eq1}
\begin{array}{llllllllll}
\mathcal{U}^2 = [\mathcal{U},\mathcal{U}] &:&  [x_1,x_2] = c, & [x_1,y_1] = z_1, & [x_2,y_1] = z_2, \\[3mm]
& & [y_1,y_2] = c, & [x_1,y_2] = z_2, & [x_2,y_2] = z_3. \\[3mm]
\mathcal{U}^3 = [\mathcal{U},\mathcal{U}^2] &:& [x_1,z_i] = x_i^3,  & [y_1,z_i] = y_{i}^3,\\[3mm]
& &[x_2,z_i] = x_{i+1}^3, & [y_2,z_i] = y_{i+1}^3, & 1\leq i\leq 3, \\[3mm]
\mathcal{U}^k = [\mathcal{U},\mathcal{U}^{k-1}]&:& [x_1,x_i^{k-1}] = x_i^k, & [y_1,y_i^{k-1}] = y_i^k, &1\leq i\leq k,\\[3mm]
& & [x_2,x_i^{k-1}] = x_{i+1}^k, & [y_2,y_i^{k-1}] = y_{i+1}^k, & 4\leq k\leq n-1, \\[3mm]
\mathcal{U}^n = [\mathcal{U},\mathcal{U}^{n-1}] &:& [x_1,x_i^{n-1}] = x_i^n, & [y_1,x_i^{n-1}] = y_i^{n}, \\[3mm]
& & [x_2,x_i^{n-1}] = x_{i+1}^n, & [y_2,x_i^{n-1}] = y_{i+1}^{n}, & 1\leq i\leq n.\\[3mm] 
\end{array}
\end{equation}

For the sake of convenience we shall use notation: $\mathcal J(x,y,z)= [x,[y,z]] + [y,[z,x]] + [z,[x,y]].$

\begin{proposition} \label{prop2.1} 
Let $n\geq 5$ be an odd integer. Consider the vector space
\[\mathcal{N}=\mathcal{U}\oplus \mathcal{U}^2\oplus\cdots \oplus \mathcal{U}^n\] where the subspaces $\mathcal{U}^i$ are defined as above. Then the bracket operations defined above along with the following non-zero products  
\begin{equation}\label{eq2}
\begin{split}
[z_j, x_i^{n-2}] &= -y_{i+(j-1)}^{n}, \quad 1\leq i\leq n-1, \quad 1\leq j\leq 3,\\
[x_j^p, x_i^{n-p}] &= (-1)^{p-1} y_{i+(j-1)}^{n}, \quad 3\leq p\leq \frac{n-1}{2}, \quad  1\leq i\leq n-p+1,\quad 1\leq j\leq p+1,
\end{split}
\end{equation}
becomes a quasi-cyclic Lie algebra.
\end{proposition}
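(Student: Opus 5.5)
Since the brackets are given only on basis elements and extended by antisymmetry and bilinearity, the one thing to prove is the Jacobi identity $\mathcal J(a,b,c)=0$ for all triples of basis vectors. Quasi-cyclicity then follows from the construction: $\mathcal{U}^k=[\mathcal{U}^{k-1},\mathcal{U}]$, and $\mathcal{U}$ generates $\mathcal{N}$.

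\textbf{Grading and first reductions.} Let $\deg=k$ on $\mathcal{U}^k$. All products in \eqref{eq1} and \eqref{eq2} respect this grading. In \eqref{eq2} the degrees are $2+(n-2)=n$ and $p+(n-p)=n$. Any bracket landing in degree $>n$ is zero. Hence $\mathcal J(a,b,c)$ is homogeneous of degree $\deg a+\deg b+\deg c$, and it vanishes automatically unless this sum is $\le n$.

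The $\mathcal{U}^n$ component is central. So we may assume no entry lies in $\mathcal{U}^n$, and moreover the total degree is at most $n$.

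\textbf{Triples of total degree $<n$.} Here only the products in \eqref{eq1} can occur. I would identify $\mathcal{U}^k$ (for $3\le k\le n-1$) with $\mathcal{V}_k\oplus\mathcal{W}_k$, indexed like binary polynomials of degree $k$. In this picture, bracketing with $x_1$ or $y_1$ keeps the index, and bracketing with $x_2$ or $y_2$ shifts the index by one.

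Two triples of degree $<n$ need no work. Triples with at most one entry from $\mathcal{U}$ are trivial, since the only nonzero brackets between two elements of degree $\ge2$ are those of \eqref{eq2}, which have total degree $n$. Triples of the form $(u,u',w)$ with $u,u'\in\mathcal{U}$ and $\deg w = k\ge 2$ are the real content.

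The triple $(u,u',w)$ gives
$$[u,[u',w]]-[u',[u,w]]=[[u,u'],w].$$
The right-hand side lies in $[\mathcal{U}^2,\mathcal{U}^{k}]$, which is nonzero only when $k=n-2$. So for $k\le n-3$ one needs $[u,[u',w]]=[u',[u,w]]$.

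This commutation is exactly what the index bookkeeping gives. The $x$-operators act only on $x$-chains, and $x_1,x_2$ commute as shift operators, as do $y_1,y_2$. Mixed pairs $(x_\ast,y_\ast)$ acting on $w$ with $k\ge3$ give zero on both sides, because $x$ kills $\mathcal{W}_k$ and $y$ kills $\mathcal{V}_k$. The exceptions are $k=2$, where $w=z_i$, and the special top degree $n-1$, where $y$ acts on $x^{n-1}$.

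I would check these exceptional rows directly from the table. For $w=c$, everything is zero. For $w=z_i$, the indices $i,i+1$ produce consistent results.

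\textbf{The degree-$n$ triples (main obstacle).} The substantive cases have total degree exactly $n$:
\begin{itemize}
\item $(u,u',x^{n-2}_i)$, with $[u,u']\in\{c,z_1,z_2,z_3\}$;
\item $(u,z_j,x^{n-3}_i)$;
\item $(u,x_j^p,x_i^{n-p-1})$;
\item $(u,u',z_i)$ with $n=5$.
\end{itemize}
Each must be compared with the products in \eqref{eq2}.

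Take $u=x_a$ and $u'=y_b$, so $[u,u']=z_{a+b-1}$. Then $[x_a,[y_b,x_i^{n-2}]]=0$ (for $n-2\ge3$), and
$$[y_b,[x^{n-2}_i,x_a]]=-[y_b,x^{n-1}_{i+a-1}]=-y^n_{i+a+b-2}.$$
Meanwhile $[x_i^{n-2},z_{a+b-1}]=y^n_{i+a+b-2}$ by \eqref{eq2}. These sum to zero. This case is what motivates the sign $-1$ in the first line of \eqref{eq2}.

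Similarly, the triple $(x_a,x_j^p,x_i^{n-p-1})$ reduces to the identity
$$(-1)^{p}+(-1)^{p-1}=0,$$
using $[x_a,x_j^p]=x^{p+1}_{j+a-1}$ and the index additivity $i+(j-1)$. This is the source of the alternating sign $(-1)^{p-1}$.

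The boundary values need separate matching. For $p=3$ the neighbour is $z_j$, which ties to the first line. For $p=(n-1)/2$ the two factors $x^p$ and $x^{n-p}$ have degrees differing by one, and the defined range of $p$ together with antisymmetry must cover all pairs. This is where oddness of $n$ enters: the pairs $(p,n-p)$ split without a middle term.

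I expect this sign/index bookkeeping at the ends of the range to be the main work. I would organise it by writing every degree-$n$ product as a bilinear form on index polynomials, and verifying that the Jacobiator vanishes generically, then treating $p=2,3,(n-1)/2$ by hand.
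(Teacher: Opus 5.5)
Your proposal is correct. It rests on the same three families of Jacobi triples the paper uses: $\mathcal J(x_a,y_b,x_i^{n-2})$, $\mathcal J(x_a,z_j,x_i^{n-3})$ and $\mathcal J(x_a,x_j^p,x_i^{n-1-p})$. You run them in the opposite direction, however. The paper uses these identities inductively in $p$ to \emph{derive} the products \eqref{eq2} from \eqref{eq1}, which shows how the signs and indices are dictated. It then only asserts that the remaining Jacobi identities hold.

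You skip the derivation and supply the sufficiency argument the paper leaves implicit. The grading shows that only triples of total degree $\le n$ containing a generator can fail. Below degree $n$, only triples $(u,u',w)$ matter, and they reduce to commutation of the shift operators $x_1,x_2$ (resp.\ $y_1,y_2$). At degree $n$, everything comes down to explicit cancellations such as $-y^n+y^n=0$ and $(-1)^p+(-1)^{p-1}=0$. Since the statement only asserts that the given brackets define a Lie algebra, your route makes explicit the part actually needed. The paper's route adds the information of where \eqref{eq2} comes from.

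Three small corrections:
\begin{itemize}
\item Your last bullet is misstated. For $n=5$ the triple $(u,u',z_i)$ has degree $4$, not $5$. The genuine extra case is $(u,z_j,z_i)$, which is your second bullet with $x^{n-3}=z$.
\item The boundary cases you plan to treat by hand disappear if you put $z_j=x_j^2$. Both lines of \eqref{eq2} then read $[x_j^p,x_i^{n-p}]=(-1)^{p-1}y^n_{i+j-1}$. Because $n$ is odd, $(-1)^{p-1}=-(-1)^{n-p-1}$, so this single formula is compatible with antisymmetry for every $2\le p\le n-2$. Your generic cancellation then covers $p=2,3,\frac{n-1}{2}$ at once.
\item For completeness, also list the degree-$n$ triples involving some $y^k$ with $k\ge 3$, together with the choice $u=y_b$ in your second and third bullets. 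An example is $(y_a,y_b,y_i^{n-2})$, which vanishes because $[\mathcal W_1,\mathcal W_{n-1}]=0$. All of these are zero term by term.
\end{itemize}
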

\begin{proof}

First, applying induction we get the following products:
\begin{equation}\label{eq3}
[x_1^{p},x_i^{n-p}]=(-1)^{p
-1}y_i^{n}, \quad  3 \leq p\leq \frac{n-1}{2}, \quad 1\leq i\leq n-p+1.
\end{equation}
From $\mathcal J(x_1,y_1,x_i^{n-2})=0$ and \eqref{eq1}, we obtain $[z_1,x_i^{n-2}]=-y_{i}^{n}$.  Using this product the equality $\mathcal J(x_1,z_1,x_i^{n-3})=0$, 
we derive $[x_1^3,x_i^{n-3}]=(-1)^2y_{i}^{n}$. Thus, we have the base of induction. 

Taking into account the induction hypothesis in equality $\mathcal J(x_1, x_1^p, x_i^{n-1-p})=0$, we derive that $\eqref{eq3}$ is true. 
In a similar way one can derive the products \eqref{eq2}. The products except given in \eqref{eq1} and \eqref{eq2} are assumed to be zero. Then the  verification of the Jacobi identity confirms that $\mathcal{N}$ is indeed a Lie algebra.
\end{proof}

We shall denote the algebra obtained in Proposition \ref{prop2.1} by $\mathfrak{N}(n)$ and refer it as {\it model quasi-cyclic Lie algebra}.

\begin{remark} It should be noted that for the case of $n$ even the products \eqref{eq1} can not be extended by adding suitable bracket products on $\mathcal{N}$ such that it becomes a quasi-cyclic Lie algebra. Indeed, assuming $n=2k$ and applying recursive process, one can get the product $[x_1^{k-1},x_1^{k+1}]=(-1)^{k-2}y_{1}^{2k}.$ Then the Jacobi identity does not hold for $\{x_1, x_1^{k-1},x_1^{k}\}$. 
\end{remark}

The next step of our construction consists of adding products on the space 
$$\mathcal{N}=\mathcal{U}\oplus \mathcal{U}^2\oplus\cdots \oplus \mathcal{U}^n,$$ 
where the products \eqref{eq1}-\eqref{eq2} already given. Since we consider the algebra $\mathfrak{sl}_2\ltimes \mathcal N$, then define the action of $\mathfrak{sl}_2$ on $\mathcal N$ as follows
\begin{equation}\label{eq4}
\left\{\begin{array}{llllll}
[h, v_i^{k}]=(k+2-2i)v_i^{k}, &[e,v_i^{k}]  = (i-1)v_{i-1}^{k}, & [f ,v_i^{k}] = (k+1-i)v_{i+1}^{k}, & 1\leq i \leq k+1,\\[3mm]
[h, z_i] = (4-2i)z_i, & [e , z_i] = (i-1)z_{i-1}, & [f, z_i] = (3-i)z_{i+1}, & 1\leq i \leq 3,\\
\end{array}\right.
\end{equation}
where $v_i^{k}\in \{x_i^{k}, y_i^{k}\}$ and $[\mathfrak{sl}_2, c]=0$.

Let us introduce notations
$$\begin{array}{lllll}
u(i, j) = (i - 1 - (j - 1)n) y_{i + j - 2}^{n-1}, & 1 \leq i \leq n + 1, &  1\leq j \leq 2\\[3mm]
v(p, i, j) = (-1)^p \left( (i - 1)p - (j - 1)(n - p + 1) \right) y_{i + j - 2}^{n-1}, & 1 \leq i \leq n - p + 1, & 1 \leq j \leq p + 1,
\end{array}
$$
where $2 \leq p \leq \frac{n-1}{2} + 1$ and we identify $z_j$ with $x_j^2$.

\begin{proposition} \label{prop2.3}
The vector space 
$\mathcal{N}=\mathcal{U}\oplus \mathcal{U}^2\oplus\cdots \oplus \mathcal{U}^n$ endowed with products \eqref{eq1}-\eqref{eq2} and \eqref{eq4}, together with the additional brackets
\begin{equation}\label{eq5}
    [x_1, y_{2}^{n}] = ay_{1}^{n-1}, \quad  [y_1, x_2^n] = -by_{1}^{n-1},
\end{equation}
admits a unique Lie algebra structure of the form $\mathfrak{sl}_2\ltimes \mathcal N$, whose table of multiplications is defined by adding the only following non-zero products:
\begin{equation}\label{eq6}
 [x_j,y_i^n]=au(i,j),\quad [y_j,x_i^n]=-bu(i,j),\quad [x_j^p,x_i^{n-p+1}]=\frac{(p-1)a+b}{p}v(p,i,j). 
\end{equation}
\end{proposition}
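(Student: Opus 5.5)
We will derive the products \eqref{eq6} as forced consequences of the Jacobi identity, which gives uniqueness. Then we will check that the resulting table satisfies all Jacobi identities, which gives existence. The guiding structure is the $\mathfrak{sl}_2$-module decomposition coming from \eqref{eq4}. Each $\mathcal{V}_k=\mathrm{span}\{x_i^k\}$ and $\mathcal{W}_k=\mathrm{span}\{y_i^k\}$ is the irreducible module $V(k)$ of highest weight $k$, with $z_j$ spanning $V(2)$ and $c$ spanning the trivial module. For $\mathfrak{sl}_2\ltimes\mathcal{N}$ to be a Lie algebra, the bracket $\mathcal{N}\times\mathcal{N}\to\mathcal{N}$ must be $\mathfrak{sl}_2$-equivariant. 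Therefore every bracket $V(k)\otimes V(l)\to V(m)$ is determined, up to a scalar, by the Clebsch--Gordan projection, and it is determined by its value on a single pair of basis vectors.

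\textbf{Step 1 (uniqueness).} We start from the seed products \eqref{eq5}. Since $x_1,y_1$ and $x_2^n,y_2^n$ are weight vectors, applying $\mathcal{J}(e,\cdot,\cdot)$ and $\mathcal{J}(f,\cdot,\cdot)$ propagates $[x_1,y_2^n]=a y_1^{n-1}$ to all $[x_j,y_i^n]$. The coefficients are those of the unique equivariant map $V(1)\otimes V(n)\to V(n-1)$, and a direct check on the lowering action shows they are exactly $a\,u(i,j)$. The same argument gives $[y_j,x_i^n]=-b\,u(i,j)$.

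For the mixed products $[x_j^p,x_i^{n-p+1}]$ we use induction on $p$, with $z_j=x_j^2$ as the base. For $p=2$, the identity $\mathcal{J}(x_1,y_1,x_i^{n-1})=0$ (or $\mathcal{J}(y_1,x_1,\cdot)$) combined with \eqref{eq1} expresses $[z_1,x_i^{n-1}]$ through $[x_1,y_i^n]$ and $[y_1,x_i^n]$. This yields the coefficient $\frac{a+b}{2}$ times the equivariant map $v(2,i,j)$. For the inductive step, $\mathcal{J}(x_1,x_1^{p-1},x_i^{n-p+1})=0$ together with \eqref{eq1} gives
\[
[x_1^p,x_i^{n-p+1}] = [x_1,[x_1^{p-1},x_i^{n-p+1}]] - [x_1^{p-1},x_i^{n-p+2}].
\]
Both terms on the right are known by the induction hypothesis and Step 1. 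Evaluated at highest weight, this becomes a scalar recursion $\lambda_p=\alpha_p\lambda_{p-1}+\beta_p a$. We will check that it is solved by $\lambda_p=\frac{(p-1)a+b}{p}$ once the normalisation in $v(p,i,j)$ is accounted for. We then extend from $j=1$ to all $j$ by equivariance. By weight considerations, together with the quasi-cyclic grading (the degrees must sum to $n+1$, landing in $\mathcal{U}^{n+1}=0$ unless the target is $\mathcal{W}_{n-1}$), no other new products can be nonzero. This proves that the multiplication table is unique.

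\textbf{Step 2 (existence).} We must verify $\mathcal{J}=0$ on all triples. Identities involving an element of $\mathfrak{sl}_2$ hold by construction, because every bracket we wrote is an equivariant projection. Among triples from $\mathcal{N}$, those not involving the new products already hold by Proposition~\ref{prop2.1}. Every new product lands in $\mathcal{W}_{n-1}$, which has degree $n-1$. So the only triples that can pick up new terms have total degree $n+1$ (where the new terms must cancel among themselves) or higher (where everything vanishes, since brackets of $\mathcal{W}_{n-1}$ with $\mathcal{U}$ land in degree $n$ and $y_i^{n-1}$ brackets beyond that are zero). Moreover, because $\mathcal{J}$ is itself an equivariant trilinear map, it suffices to check it on highest-weight vectors of each isotypic component. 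This reduces the check to finitely many scalar identities, essentially the same recursions as in Step 1 read consistently in every order.

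\textbf{Main obstacle.} The hardest part is this consistency check in Step 2. The recursion determining $[x_j^p,x_i^{n-p+1}]$ can be derived along several different Jacobi triples, for example via $x_1$ or via $y_1$, and via $p$ or via $n-p+1$. All of these must give the same $\frac{(p-1)a+b}{p}$. In particular, the symmetric middle case $p=\frac{n-1}{2}+1$, where $[x^p,x^p]$ must be antisymmetric, is where the oddness of $n$ enters; this parallels the even-$n$ obstruction noted in the Remark. We would first verify the two-parameter family at $p=2,3$ by hand. We would then argue that the general case follows because the difference of two derivations satisfies a homogeneous recursion with zero initial value.
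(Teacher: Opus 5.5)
\textbf{Verdict: there is a genuine gap, in Step~2 (existence), and it cannot be closed for $a\neq b$.} Your Step~1 follows the paper's proof essentially verbatim: the same identities $\mathcal J(x_1,y_1,x_2^{n-1})=0$ and $\mathcal J(x_1,x_1^{p-1},x_i^{n-p+1})=0$, then propagation by $e,f$. It correctly shows that any Lie structure must have the table \eqref{eq6}. The problem is that the consistency you single out as the main obstacle is false unless $a=b$.

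Evaluate the Jacobi identity on $(x_1,y_2,x_1^{n-1})$. By \eqref{eq1} and \eqref{eq6},
$[x_1,y_2^n]=a\,u(2,1)=a\,y_1^{n-1}$, $[y_2,x_1^n]=-b\,u(1,2)=nb\,y_1^{n-1}$, and $[z_2,x_1^{n-1}]=\tfrac{a+b}{2}v(2,1,2)=-\tfrac{(n-1)(a+b)}{2}\,y_1^{n-1}$. Hence
\[
\mathcal J(x_1,y_2,x_1^{n-1})=[x_1,y_2^n]-[y_2,x_1^n]-[z_2,x_1^{n-1}]=\frac{(n+1)(a-b)}{2}\,y_1^{n-1}.
\]
These three values are not choices. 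They are forced by \eqref{eq5}, by equivariance, and by $\mathcal J(x_1,y_1,x_2^{n-1})=0$, that is, by your own Step~1. For equivariance, apply $f$ to $[y_1,x_1^n]=0$ and to $[z_1,x_1^{n-1}]=0$; both vanish because weight $n+1$ does not occur in $\mathcal N$. Equivalently, $\mathcal J(x_1,y_2,x_1^{n-1})=0$ and $\mathcal J(x_2,y_1,x_1^{n-1})=0$ would require $[z_2,x_1^{n-1}]$ to equal both $(a-nb)\,y_1^{n-1}$ and $(b-na)\,y_1^{n-1}$.

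So for $a\neq b$ no Lie algebra exists at all. Your closing argument (a ``homogeneous recursion with zero initial value'') fails at the very first step, since at $p=2$ the two routes already differ by $\frac{(n+1)(a-b)}{2}$. The paper's proof does not supply this check either. It only derives necessary products and asserts that the remaining ones vanish; it never verifies the Jacobi identity for the resulting table.

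\textbf{What can be proved is the case $a=b$.} Then $\frac{(p-1)a+b}{p}=a$ for every $p$. Identify $v_i^k$ with $s^{k-i+1}t^{i-1}$ and set $\{A,B\}=\partial_sA\,\partial_tB-\partial_tA\,\partial_sB$. One checks that $u(i,j)=\{x_j,y_i^n\}$ and $v(p,i,j)=(-1)^p\{x_j^p,x_i^{n-p+1}\}$. So the new products $\mu$ in \eqref{eq6} are exactly:
\begin{itemize}
\item $a\{\cdot,\cdot\}$ on $\mathcal V_1\times\mathcal W_n$;
\item $-a\{\cdot,\cdot\}$ on $\mathcal W_1\times\mathcal V_n$;
\item $(-1)^p a\{\cdot,\cdot\}$ on $\mathcal V_p\times\mathcal V_{n-p+1}$.
\end{itemize}
These values lie in $\mathcal W_{n-1}$, which is central in $\mathcal N$ and not in the domain of $\mu$. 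Hence your reduction of Step~2 to the identity $\circlearrowleft\mu([X,Y],Z)=0$ on triples of total degree $n+1$ is correct. For $a=b$, each such identity either vanishes trivially or is an instance of the Leibniz rule for $\{\cdot,\cdot\}$. For example, on a triple from $\mathcal V_1\times\mathcal V_k\times\mathcal V_{n-k}$ with polynomial representatives $u,A,B$, it reads $-\{uA,B\}+\{u,AB\}+\{A,uB\}=0$. So your plan goes through once the hypothesis $a=b$ is added, but not for the two-parameter family as stated.
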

\begin{proof}
Consider the equality $\mathcal J(x_1, y_1, x_2^{n-1})=0$. A direct computation gives
\begin{equation}\label{eq7}
    [z_1,x_2^{n-1}] = (a+b)y_1^{n-1}.
\end{equation}
It follows by induction that
\begin{equation}\label{eq8}
 [x_1^p,x_2^{n-p+1}]=(-1)^p((p-1)a+b)y_1^{n-1} \quad \text{for } 3\leq p\leq \frac{n-1}{2}+1.   
\end{equation}
Indeed, from $\mathcal J(x_1,z_1,x_2^{n-2})=0$  and product \eqref{eq7}, we derive  $[x_1^3,x_2^{n-2}]=-(2a+b)y_1^{n-1}$. This verifies the base of the induction. Then applying the induction assumption in the Jacobi identity $\mathcal J(x_1,x_1^{p-1},x_2^{n-p+1})=0$, we obtain \eqref{eq8}.

Below, we are going to produce the remaining non-zero products by the action of $\mathfrak{sl}_2$ given in \eqref{eq4} to products \eqref{eq5}, \eqref{eq7}, and \eqref{eq8}. 

The actions of elements $e$ and $h$ to the product $[x_1,y_3^n]$ provide   $[x_1,y_3^n] = 2ay_2^{n-1}$, which can be used as a base of induction to 
get the following products 
$$[x_1,y_i^n] = a(i-1)y_{i-1}^{n-1}, \ 1\leq i \leq n+1.$$
These products are derived by once again considering the actions of the elements $e$ and $h$ on the product $[x_1, y_{i+1}^n]$, together with the induction hypothesis. 

Now, due to $\mathcal J(f, x_1,y_i^n)=0$ we get $[x_2,y_i^n] = a(i-1-n)y_i^{n-1}, 1\leq i \leq n+1,$ which completes the proof of the first product of \eqref{eq6}.

By a similar approach, starting from the action of $\mathfrak{sl}_2$ on $[y_1, x_2^n]$, one can establish the second equality in \eqref{eq6}.

Consider the case $p=2$ in \eqref{eq6}. The $\mathfrak{sl}_2$-action applied to the product $[z_1, x_2^{n-1}]$ yields $[z_1, x_i^{n-1}] = (a+b)(i-1)y_{i-1}^{n-1}, \ 1 \leq i \leq n.$ By successive applications of the raising and lowering operators $e$ and $f$, respectively, and using induction, one obtains 
$$[z_j, x_i^{n-1}] = \frac{a+b}{2}\big(2(i-1) - (j-1)(n-1)\big)y_{i+j-2}^{\,n-1}, 
\quad 1 \leq j \leq 3,\ 1 \leq i \leq n.$$

The case $3\leq p\leq \frac{n-1}{2}+1$ is established through a double induction argument. 
Beginning with $j=1$, an induction on $i$ yields the base relations
$$[x_1,x_i^{n-p+1}]=(-1)^p ((p-1)a+b)(i-1) y_{i-1}^{n-1}, \quad 1\leq i \leq n-p+2.$$
Subsequent application of the lowering operator $f$ to these products, combined with the Leibniz identity, produces the inductive step for $j=2$
$$
[x_2^p,x_i^{n-p+1}]=(-1)^p\frac{(p-1)a+b}{p}((i-1)p-(n-p+1))y_i^{n-1}.
$$

The general case follows by induction on $j$ through repeated application of the $\mathfrak{sl}_2$-action on $[x_2^p,x_i^{n-p+1}]$, completing the proof of \eqref{eq6}.

It can be verified that all remaining products, except those already obtained, vanish.
\end{proof}

We call the nilpotent Lie algebra $\mathcal{N}$ of Proposition \ref{prop2.3} 
a \emph{generalized quasi-cyclic Lie algebra}, and denote it by $\mathcal{GN}(a,b).$
 
Prior establishing the innerness of derivations for the algebra $\mathfrak{sl}_2\ltimes \mathcal{GN}(a,b)$, we recall a result from \cite{Leg-53}.

\begin{theorem}{\rm (\cite[Lemma~2.4]{Leg-53})}\label{thm2.4}
The Lie algebra $\mathcal S \ltimes \mathcal R$ has no outer derivation if and only if 
any derivation of $\mathcal R$ which commutable with all the elements of $\operatorname{ad}({\mathcal S})_{| {\mathcal R}}$ is an inner derivation.  
\end{theorem}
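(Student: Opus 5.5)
We read the statement with $\mathcal S$ a Levi factor (semisimple) and $\mathcal R$ the solvable radical of $\mathcal L=\mathcal S\ltimes\mathcal R$. We interpret ``inner derivation of $\mathcal R$'' as a map of the form $\operatorname{ad}(x)|_{\mathcal R}$ with $x\in\mathcal L$. The plan is to prove the two implications separately. The converse direction is a simple extension argument. The direct direction uses two standard facts about semisimple Lie algebras in characteristic zero: Whitehead's first lemma, $H^1(\mathcal S,M)=0$, and Weyl's complete reducibility.

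\emph{Step 1 (only inner derivations $\Rightarrow$ condition on $\mathcal R$).}
Let $d$ be a derivation of $\mathcal R$ with $d\circ\operatorname{ad}(s)|_{\mathcal R}=\operatorname{ad}(s)|_{\mathcal R}\circ d$ for all $s\in\mathcal S$.
\begin{itemize}
\item Define $D$ on $\mathcal L$ by $D|_{\mathcal S}=0$ and $D|_{\mathcal R}=d$.
\item We check the Leibniz rule on each type of pair. For $s,s'\in\mathcal S$, both sides vanish. For $s\in\mathcal S$ and $r\in\mathcal R$, the commuting hypothesis gives
\[
D[s,r]=d[s,r]=[s,d r]=[Ds,r]+[s,Dr].
\]
For $r,r'\in\mathcal R$, the rule holds because $d$ is a derivation of $\mathcal R$.
\item So $D\in\Der(\mathcal L)=\Inn(\mathcal L)$. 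Hence $D=\operatorname{ad}(x)$ for some $x\in\mathcal L$, and therefore $d=\operatorname{ad}(x)|_{\mathcal R}$ is inner.
\end{itemize}

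\emph{Step 2 (condition on $\mathcal R$ $\Rightarrow$ only inner derivations).}
Let $D\in\Der(\mathcal L)$.
\begin{itemize}
\item First normalize $D$ on $\mathcal S$. The restriction $D|_{\mathcal S}\colon\mathcal S\to\mathcal L$ is a $1$-cocycle for the $\mathcal S$-module $\mathcal L$ (with the adjoint action). By Whitehead's first lemma it is a coboundary, so $D|_{\mathcal S}=\operatorname{ad}(x_0)|_{\mathcal S}$ for some $x_0\in\mathcal L$.
\item Replacing $D$ by $D-\operatorname{ad}(x_0)$, we may assume $D(\mathcal S)=0$.
\item Next, $D(\mathcal R)\subseteq\mathcal R$, because the radical is invariant under every derivation in characteristic zero. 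So $d:=D|_{\mathcal R}$ is a derivation of $\mathcal R$.
\item Since $D(s)=0$, the Leibniz rule gives $D[s,r]=[s,Dr]$. Thus $d$ commutes with $\operatorname{ad}(\mathcal S)|_{\mathcal R}$, and by hypothesis $d=\operatorname{ad}(x)|_{\mathcal R}$ for some $x\in\mathcal L$.
\end{itemize}

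\emph{Step 3 (the main obstacle: choosing $x$ centralizing $\mathcal S$).}
The issue is that $D-\operatorname{ad}(x)$ kills $\mathcal R$, but on $\mathcal S$ it equals $-\operatorname{ad}(x)|_{\mathcal S}$, which need not vanish. We resolve this with complete reducibility.
\begin{itemize}
\item Consider the map $\varphi\colon\mathcal L\to\Der(\mathcal R)$, $y\mapsto\operatorname{ad}(y)|_{\mathcal R}$.
\item $\varphi$ is a homomorphism of $\mathcal S$-modules. Here $\mathcal S$ acts on $\mathcal L$ by $\operatorname{ad}$, and on $\Der(\mathcal R)$ by commutator with $\operatorname{ad}(s)|_{\mathcal R}$.
\item By Weyl's theorem, $\mathcal L=\ker\varphi\oplus M$ for an $\mathcal S$-submodule $M$, and $\varphi|_M\colon M\to\operatorname{im}\varphi$ is an isomorphism of $\mathcal S$-modules.
\item The element $d$ lies in $\operatorname{im}\varphi$ and is $\mathcal S$-invariant, i.e. annihilated by the $\mathcal S$-action. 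Its unique preimage $x\in M$ is therefore also $\mathcal S$-invariant, that is, $[\mathcal S,x]=0$.
\item With this choice, $D-\operatorname{ad}(x)$ vanishes on $\mathcal R$, and also on $\mathcal S$ because $D(\mathcal S)=0$ and $[x,\mathcal S]=0$.
\item Hence $D=\operatorname{ad}(x)$, and undoing the normalization, the original derivation equals $\operatorname{ad}(x_0+x)$, which is inner.
\end{itemize}
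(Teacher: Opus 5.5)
Your proof is correct. The paper gives no argument of its own for this statement; it is quoted from Leger \cite[Lemma~2.4]{Leg-53}, so there is no in-paper proof to compare against. Your route is the standard proof: Whitehead's first lemma to reduce to $D(\mathcal S)=0$, invariance of the radical under derivations, and complete reducibility applied to the $\mathcal S$-equivariant map $y\mapsto\operatorname{ad}(y)|_{\mathcal R}$ to choose an $\mathcal S$-invariant implementing element. It is also the same splitting $D=\operatorname{ad}_x+d$ that the paper uses informally in Proposition~\ref{prop2.11} and Theorem~\ref{thm4.5}. Your Step~3 is the point those arguments leave implicit.

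Your reading of ``inner'' as $\operatorname{ad}(x)|_{\mathcal R}$ with $x\in\mathcal L$ costs nothing. In Step~1 the element $x$ with $D=\operatorname{ad}(x)$ satisfies $[x,\mathcal S]=D(\mathcal S)=0$. Moreover $C_{\mathcal L}(\mathcal S)\subseteq\mathcal R$ because $Z(\mathcal S)=0$. Hence $d=\operatorname{ad}_{\mathcal R}(x)$ with $x\in\mathcal R$, and the equivalence also holds under the stricter meaning of an inner derivation of $\mathcal R$.
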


\begin{proposition}\label{prop2.5} Any algebra of the family $\mathfrak{sl}_2\ltimes \mathcal{GN}(a,b)$ admits only inner derivations. 
\end{proposition}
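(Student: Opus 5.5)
By Theorem~\ref{thm2.4}, it suffices to take a derivation $d$ of $\mathcal N=\mathcal{GN}(a,b)$ that commutes with $\ad(\mathfrak{sl}_2)_{|\mathcal N}$ and show that $d$ is inner. Such a $d$ is an $\mathfrak{sl}_2$-module endomorphism of $\mathcal N$. As an $\mathfrak{sl}_2$-module, $\mathcal N$ decomposes into irreducibles:
\begin{itemize}
\item $V(1)$ twice, spanned by $\{x_1,x_2\}$ and $\{y_1,y_2\}$;
\item $V(0)=\langle c\rangle$ and $V(2)=\langle z_1,z_2,z_3\rangle$;
\item for $3\le k\le n$, $V(k)$ twice, spanned by the $x^k_i$ and the $y^k_i$.
\end{itemize}
By Schur's lemma, $d$ maps each isotypic component to itself, acting on it as a $2\times 2$ matrix (on the pairs $V(1)$ and $V(k)$, $k\ge 3$) or as a scalar (on $c$ and on the $z$'s). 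Moreover, $d$ is determined by its restriction to the generating set $\mathcal U=\mathcal V_1\cup\mathcal W_1$. So we write
\[
d(x_1)=\alpha x_1+\beta y_1,\qquad d(y_1)=\gamma x_1+\delta y_1,
\]
with the values on $x_2$ and $y_2$ forced by commutation with $f$. The values of $d$ on every $\mathcal U^k$ are then propagated by the Leibniz rule from \eqref{eq1}.

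The next step applies the Leibniz rule to the defining brackets and matches the result with the Schur form. From $[x_1,x_2]=c=[y_1,y_2]$ we get $d(c)$ in terms of $\alpha,\beta,\gamma,\delta$. Expanding $d[x_1,y_1]$ produces terms such as $\beta[y_1,y_1]=0$ and $\gamma[x_1,x_1]=0$. But $d[x_1,y_2]$ and $d[x_1,x_2]$ must lie in the correct isotypic components: $[x_1,x_2]=c$ lies in $V(0)$, whereas $[x_1,y_1]=z_1$ lies in $V(2)$. Writing $d(x_2)=\alpha x_2+\beta y_2$ and using the antisymmetric products onto $c$ versus the symmetric products onto the $z_i$, I expect to force $\beta=\gamma=0$ at this stage, or at the latest in degree $3$. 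In degree $3$ the $x$- and $y$-chains $[x_1,z_i]$ and $[y_1,z_i]$ must each be mapped by $d$ into the span of $x^3$ and $y^3$ consistently with the $2\times2$ Schur block.

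The key step is the top of the filtration. The brackets \eqref{eq2} send pairs of $x$-elements into the $y^n$-chain, and $[y_j,x_i^{n-1}]=y^n_{i+j-1}$ mixes the chains. The products \eqref{eq5}--\eqref{eq6} bring elements back down to $y^{n-1}$ with coefficients $a$ and $b$. On $x^k_i$ for $k<n$, $d$ acts as the scalar $\alpha+(k-1)\alpha'$ of the appropriate weight pattern. Here $\alpha$ denotes the eigenvalue on $x$-generators and $\delta$ that on $y$-generators; elements obtained by $k$ brackets of $x$'s get $k\alpha$, those built from $y$'s get $k\delta$, and $z$ gets $\alpha+\delta$. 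Then $[y_1,x^{n-1}_i]=y^n_i$ gives $d$ eigenvalue $\delta+(n-1)\alpha$ on $y^n$. The alternative route $[y_1,y^{n-1}_i]$ is zero, so it imposes nothing, but \eqref{eq2} gives eigenvalue $n\alpha$ on $y^n$. Hence $\delta=\alpha$. Finally, $[x_1,y_2^n]=a\,y^{n-1}_1$ requires $\alpha+n\alpha=(n-1)\delta$ when $a\ne0$ (and similarly when $b\ne0$), which forces $\alpha=0$. The off-diagonal Schur parameters at each level are killed in the same way, by comparing the two chains against the mixed brackets. This leaves $d$ acting with zero diagonal, i.e.\ $d$ is determined by a strictly nilpotent remainder.

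The last step shows that this remainder is $\ad(w)$ for some $w\in\mathcal N$ commuting with $\mathfrak{sl}_2$, that is, $w$ in the trivial isotypic component. The only $\mathfrak{sl}_2$-invariant vector is $c$, so the remainder must be a multiple of $\ad c$, which is zero. We therefore need the remainder to vanish outright: an $\mathfrak{sl}_2$-equivariant derivation of positive degree would map $V(1)$ to some $V(1)$ in higher degree, but no $V(1)$ occurs above degree $1$. So $d=0$ on generators, hence $d=0$, which is inner. The main obstacle is the bookkeeping in the middle step: checking that the top-degree brackets \eqref{eq2} and \eqref{eq6} really force $\beta=\gamma=0$ and $\alpha=\delta=0$ for all admissible $(a,b)$. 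One must also identify any degenerate values, for instance $a=b=0$, where a grading derivation survives and the claim would need $(a,b)\neq(0,0)$.
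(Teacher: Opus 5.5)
Your overall strategy is the paper's: reduce via Theorem~\ref{thm2.4} to an $\mathfrak{sl}_2$-equivariant derivation $d$ of $\mathcal{GN}(a,b)$, confine $d$ on $\mathcal V_1\oplus\mathcal W_1$ to a $2\times 2$ Schur block, propagate by the Leibniz rule, and kill the remaining scalar with the non-homogeneous brackets \eqref{eq5}. However, the key step fails as written because the weight bookkeeping is wrong.

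For $k\ge 3$ the elements $x^k_i$ are not built from $x$'s alone. Each $x^3_i$ is a bracket of some $x_j$ with some $z_l$, and each $z_l$ is a bracket of an $x$ with a $y$. So, with $\beta=\gamma=0$, $d$ acts by:
\begin{itemize}
\item $(k-1)\alpha+\delta$ on $x^k$;
\item $\alpha+(k-1)\delta$ on $y^k$ for $3\le k\le n-1$;
\item $(n-2)\alpha+2\delta$ on $y^n=[y_1,x^{n-1}]$.
\end{itemize}
With these values every bracket in \eqref{eq2} is homogeneous, so \eqref{eq2} gives no relation at all. Every non-zero bracket in \eqref{eq5}--\eqref{eq6} gives the same single relation $(n-2)\alpha=(n-4)\delta$, which does not force $\alpha=\delta=0$.

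The missing relation is one you wrote down but never used. We have $d(c)=d[x_1,x_2]=2\alpha c$, since the $\beta$-terms cancel because $[y_1,x_2]+[x_1,y_2]=0$. Likewise $d(c)=d[y_1,y_2]=2\delta c$, so $\alpha=\delta$; this is exactly how the paper obtains it. Only then does $[x_1,y_2^n]=a\,y_1^{n-1}$ give $(n+1)\alpha=(n-1)\alpha$, hence $\alpha=0$.

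The off-diagonal step is also not carried out, and the place where you expect it to happen is wrong. Degree $2$ only yields $\beta+\gamma=0$, because the $c$-component of $d(z_2)=d[x_1,y_2]$ must vanish. Degree-$3$ relations such as $[x_1,z_2]=[x_2,z_1]$ are automatically consistent and give nothing. What is needed is a vanishing mixed bracket, as in the paper. Since $3<n-1$, one has $[y_1,x_1^3]=0$, and applying $d$ gives $\gamma x_1^4+\beta y_1^4=0$, so $\beta=\gamma=0$. Once all four Schur parameters vanish, $d$ is zero on the generating set, hence $d=0$; the ``strictly nilpotent remainder'' discussion is unnecessary.

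Your caveat about degenerate parameters is correct and worth keeping. The paper's proof tacitly divides by $a$. If $a=0\neq b$, one uses $[y_1,x_2^n]=-b\,y_1^{n-1}$ instead. For $a=b=0$, all products in \eqref{eq5}--\eqref{eq6} vanish, so $\mathcal{GN}(0,0)=\mathfrak N(n)$ is graded and its degree derivation is outer (cf.\ Proposition~\ref{prop2.11}). So the statement genuinely requires $(a,b)\neq(0,0)$.
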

\begin{proof} 

Let $D$ be a derivation of $\operatorname{Der}\mathcal{GN}(a,b)$ that commutes with the $\mathfrak{sl}_2$-action. 
By Schur’s lemma, $D$ has the following form:
\[
D(x_i) = \alpha x_i + \gamma y_i, \qquad   
D(y_i) = \beta y_i + \delta x_i, \qquad  
1 \leq i \leq 2, \quad \alpha, \beta, \gamma, \delta \in \mathbb{C}.
\]

Applying the Leibniz rule for $D$ to the products: 
$$[x_1, x_2], \quad [y_1, y_2], \quad [x_1, y_1], \quad [x_1, z_1], \quad [y_1, x_1^3],$$
we obtain that $\alpha = \beta$ and $\gamma = \delta = 0$. Similarly, from the products $[x_1, y_1], [y_1, z_1], [y_1, y_1^{i}]$, by recursion we deduce $$D(y_1^{n-1}) = (n-1)\alpha\, y_1^{\,n-1}.$$

On the other hand, from $D([x_1, y_2]) = 2\alpha z_2$ we obtain $D([x_1, z_2]) = 3\alpha x_2^3$.  
Using this as the initial step of a recursive argument for $D([x_1, x_2^{i}])$, we find $D(x_2^{n-1}) = (n-1)\alpha\, x_2^{\,n-1}.$ Since $y_2^n = [y_1, x_2^{n-1}]$, it follows that $D(y_2^n) = n\alpha\, y_2^n.$ Furthermore, noting that $a y_1^{n-1} = [x_1, y_2^n]$, we deduce
$$D(y_1^{n-1}) = (n+1)\alpha\, y_1^{\,n-1}.$$
Comparing the two expressions for $D(y_1^{n-1})$, we conclude that $\alpha = 0$.  
Hence $D = 0$, and by Theorem~\ref{thm2.4} the proof of the theorem is complete. \end{proof}

\begin{remark} We remark that the Lie algebra constructed by Sato (see \cite{Sato-71}) is isomorphic to $\mathcal{GN}\!\left(\tfrac{1}{60}, \tfrac{1}{60}\right)$ for $n = 5$. The corresponding isomorphism is given by
$$\left\{\begin{array}{llllll}
\varphi(x_7) = x_7 + x_5, & \varphi(x_{34}) = \tfrac{1}{60}x_{34}, & 
\varphi(x_{35}) = \tfrac{1}{30}x_{35}, & \varphi(x_{36}) = \tfrac{1}{20}x_{36}, \\[3mm]
\varphi(x_{37}) = \tfrac{1}{15}x_{37}, & \varphi(x_{38}) = \tfrac{1}{12}x_{38}, & \varphi(x_i) = x_i, & \text{ otherwise.}\\[3mm]
\end{array}\right.$$
\end{remark}

Next result extend the assertion of Proposition~\ref{prop2.5} to the case of direct products of copies of $\mathcal{GN}(a,b)$.
\begin{proposition} \label{prop2.7} Any algebra of the family $\mathfrak{sl}_2\ltimes \Big(\mathcal{GN}(a_1,b_1)\oplus \cdots \oplus\mathcal{GN}(a_s, b_s)\Big)$, with each $\mathcal{GN}(a_i,b_i)\cong\mathcal{GN}(a,b)$, admits only inner derivations.  
\end{proposition}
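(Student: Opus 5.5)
Following the proof of Proposition~\ref{prop2.5}, we apply Theorem~\ref{thm2.4} with $\mathcal{S}=\mathfrak{sl}_2$ and $\mathcal{R}=\mathcal{N}=\mathcal{N}_1\oplus\cdots\oplus\mathcal{N}_s$, where $\mathcal{N}_k=\mathcal{GN}(a_k,b_k)$ and $\mathcal{N}$ is the direct sum of ideals, so $[\mathcal{N}_k,\mathcal{N}_l]=0$ for $k\neq l$. It suffices to show that any derivation $D$ of $\mathcal{N}$ commuting with $\operatorname{ad}(\mathfrak{sl}_2)|_{\mathcal{N}}$ is zero. Write $x_i^{(k)},y_i^{(k)}$ for the generators of $\mathcal{N}_k$, so that $\mathcal{U}^{(k)}=\operatorname{span}\{x_1^{(k)},x_2^{(k)},y_1^{(k)},y_2^{(k)}\}$. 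Since $D$ is $\mathfrak{sl}_2$-equivariant and preserves the generating space only up to higher terms, first observe that the degree-one component of $D$ on $\bigoplus_k\mathcal{U}^{(k)}$ is a module map between copies of $V(1)$. By Schur's lemma,
\[
D(x_i^{(k)})=\sum_l\bigl(\alpha_{kl}x_i^{(l)}+\gamma_{kl}y_i^{(l)}\bigr)+(\text{higher}),\qquad D(y_i^{(k)})=\sum_l\bigl(\delta_{kl}x_i^{(l)}+\beta_{kl}y_i^{(l)}\bigr)+(\text{higher}).
\]
As in the single case, equivariance together with the fact that $V(1)$ appears in $\mathcal{N}$ only in $\mathcal{U}$ should kill the higher terms; we need to check the weight and degree bookkeeping here.

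\textbf{Key step: killing cross terms.} Apply $D$ to the zero brackets $[x_1^{(k)},x_2^{(m)}]=0$ and $[x_1^{(k)},y_1^{(m)}]=0$ for $k\neq m$. The Leibniz rule gives
\[
0=\sum_l\bigl(\alpha_{kl}[x_1^{(l)},x_2^{(m)}]+\dots\bigr)+\sum_l\bigl(\alpha_{ml}[x_1^{(k)},x_2^{(l)}]+\dots\bigr).
\]
Only the terms with $l=m$ in the first sum and $l=k$ in the second survive. These land in the distinct summands $\mathcal{U}^{2,(m)}$ and $\mathcal{U}^{2,(k)}$, so each vanishes separately. Because $[x_1,x_2]=c$, $[x_1,y_1]=z_1$, $[x_1,y_2]=z_2$ and so on are linearly independent in $\mathcal{U}^2$, this forces $\alpha_{km}=\gamma_{km}=\delta_{km}=\beta_{km}=0$ for $k\neq m$. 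Hence $D$ preserves each $\mathcal{N}_k$, since each $\mathcal{N}_k$ is generated by $\mathcal{U}^{(k)}$, and $D|_{\mathcal{N}_k}$ is an $\mathfrak{sl}_2$-equivariant derivation of $\mathcal{GN}(a_k,b_k)$.

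The proof of Proposition~\ref{prop2.5} then gives $D|_{\mathcal{N}_k}=0$ for every $k$: it yields $\alpha=\beta$ and $\gamma=\delta=0$, and comparing the two computations of $D(y_1^{n-1})$, namely $(n-1)\alpha$ versus $(n+1)\alpha$, gives $\alpha=0$. This uses $a_k\neq0$, implicit in the isomorphism to $\mathcal{GN}(a,b)$. So $D=0$, and Theorem~\ref{thm2.4} finishes the proof.

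\textbf{Main obstacle.} The delicate point is justifying that $D$ maps $\bigoplus_k\mathcal{U}^{(k)}$ into itself. The copies of $V(1)$ in each $\mathcal{N}_k$ occur only in $\mathcal{U}$, since the other components are $V(j)$ with $j\ge2$ together with the trivial $c$. Hence the $\mathfrak{sl}_2$-isotypic component of type $V(1)$ in $\mathcal{N}$ is exactly $\bigoplus_k\mathcal{U}^{(k)}$, and equivariance of $D$ gives invariance directly. One must check that no $V(1)$ occurs among the $\mathcal{U}^j$; this holds because $\mathcal{U}^j\cong V(j)\oplus V(j)$ for $j\ge3$ and $\mathcal{U}^2\cong V(0)\oplus V(2)$.
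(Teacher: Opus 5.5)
Your proposal is correct and follows essentially the paper's route: Schur's lemma on the generating $V(1)$-isotypic part, then the vanishing cross brackets $[\mathcal{N}_k,\mathcal{N}_m]=0$ to kill the off-diagonal coefficients, then the argument of Proposition~\ref{prop2.5} on each summand. The paper only reorders this, doing the diagonal reduction before the cross terms, and your isotypic-component remark supplies a justification the paper leaves tacit. Two small touch-ups: the two brackets you list never detect $\delta_{km}$, so add e.g.\ $[y_1^{(k)},y_2^{(m)}]=0$, whose $\mathcal{N}_m$-component gives $\delta_{km}z_2^{(m)}+\beta_{km}c^{(m)}=0$; and what is really needed is $(a_k,b_k)\neq(0,0)$ (if $a_k=0$, use $[y_1,x_2^n]=-b\,y_1^{n-1}$ instead), which is a tacit hypothesis shared with Proposition~\ref{prop2.5} rather than a consequence of the isomorphism assumption, since $\mathcal{GN}(0,0)=\mathfrak{N}(n)$ carries the equivariant grading derivation of Proposition~\ref{prop2.11}.
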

\begin{proof} 
For the sake of convenience, we restrict our discussion to the case $s = 2$, as the general case can be treated by analogous reasoning. More precisely, we show that every algebra in the family $\mathfrak{sl}_2 \ltimes \big( \mathcal{GN}(a_1, b_1) \oplus \mathcal{GN}(a_2, b_2) \big)$ admits only inner derivations.

Let $D$ be a derivation of $\mathcal{GN}(a_1,b_1)\oplus \mathcal{GN}(a_2,b_2)$ that commutes with the $\mathfrak{sl}_2$-action. Denote by $\{x_1, x_2, y_1, y_2\}$ and $\{u_1, u_2, v_1, v_2\}$ the generators of $\mathcal{GN}(a_1, a_2)$ and $\mathcal{GN}(b_1, b_2)$, respectively.
Applying Schur’s lemma and the same argument as used in the proof of Proposition~\ref{prop2.5}  to each direct summand, we conclude that
\begin{align*}
D(x_i) &= \alpha_1 x_i + \alpha_2u_i+\alpha_3v_i, & D(u_i) &= \gamma_1 x_i +\gamma_2y_i+\gamma_3u_i,\\[3mm]
 D(y_i) &= \alpha_1 y_i+\beta_1u_i+\beta_2v_i, & D(v_i) &= \delta_1 x_i +\delta_2y_i+\gamma_3v_i,
\end{align*}
where  $1\leq i\leq 2.$ 

From $D([h_i,k_j])=0$ for $h_i\in\{x_1, y_1\}$ and $k_j\in \{u_1, v_1\}$ we 
obtain that $D(h)=\lambda h $ and $D(k)=\mu k$ for $h\in\{x_1,x_2,y_1,y_2\}, \ k\in\{u_1,u_2,v_1,v_2\}$. The subsequent proof is analogous to the latter part of Proposition~\ref{prop2.5} . We only need to compute $D(y_1^{n-1})$ and $D(v_1^{n-1})$ for $\mathcal{GN}(a_1,b_1)$ and $\mathcal{GN}(a_2,b_2)$ respectively, from which we can conclude that $\lambda=\mu=0$. Thus, $D = 0$, completing the proof. \end{proof}

For given Lie algebras $(\mathcal G_1, [-,-]_1)$ and $(\mathcal G_2, [-,-]_2)$ along with a Lie algebra homomorphism $\phi\colon\mathcal G_2 \to \operatorname{Der}(\mathcal G_1)$, we define on the vector space $\mathcal G_1\oplus \mathcal G_2$ the Lie algebra structure (called semi-direct product and denoted by $\mathcal G_1\rtimes \mathcal G_2$), by setting  
$$[(g_1, h_1), (g_2, h_2)] = \left( [g_1, g_2]_1 + \phi(h_1)(g_2) - \phi(h_2)(g_1), [h_1, h_2]_2 \right), \quad g_1,g_2\in \mathcal G_1, \quad h_1, h_2 \in \mathcal G_2.$$
For details on semi-direct products of Lie algebras, we refer the reader to \cite{Bou-75}.

Assuming that 
\begin{equation}\label{eq9}
\phi(u_i) = \operatorname{ad}_{x_i}, \qquad 
\phi(v_i) = \operatorname{ad}_{y_i}, \qquad 1 \leq i \leq 2,
\end{equation}
one can extend this set to a Lie algebra homomorphism $\phi\colon\mathcal{GN}(a_2,b_2) \rightarrow \operatorname{Der}\!\big(\mathcal{GN}(a_1,b_1)\big).$ Thus, we obtain the Lie algebra $\mathcal{GN}(a_1,b_1)\rtimes \mathcal{GN}(a_2,b_2)$.

\begin{proposition}\label{prop2.8} For arbitrary given values of $a_i,b_i$ the algebra $\mathfrak{sl}_2\ltimes \Big(\mathcal{GN}(a_1,b_1) * \mathcal{GN}(a_2,b_2)\Big)$ with $*\in \{\rtimes, \ltimes\}$ admits only inner derivations.
\end{proposition}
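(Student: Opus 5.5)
We apply Theorem~\ref{thm2.4} with $\mathcal S=\mathfrak{sl}_2$ and $\mathcal R=\mathcal N:=\mathcal{GN}(a_1,b_1)*\mathcal{GN}(a_2,b_2)$. For $*=\ltimes$ the two factors are swapped relative to $\rtimes$, so it suffices to treat $\rtimes$ with the homomorphism $\phi$ of \eqref{eq9}. First we check that the $\mathfrak{sl}_2$-action on $\mathcal N$ is by derivations. The action on each factor is given by \eqref{eq4}, and $\phi$ sends the generator $u_i$ to $\operatorname{ad}_{x_i}$. Since $u_i$ and $x_i$ span isomorphic $\mathfrak{sl}_2$-modules with matching bases, $\phi$ is $\mathfrak{sl}_2$-equivariant. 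Hence $\mathfrak{sl}_2\ltimes\mathcal N$ is well defined, and $\mathcal N$ is its nilradical.

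Next, let $D\in\operatorname{Der}(\mathcal N)$ commute with $\operatorname{ad}(\mathfrak{sl}_2)|_{\mathcal N}$. Modulo $[\mathcal N,\mathcal N]$, the generating space is $\langle x_i,y_i,u_i,v_i\rangle$, a sum of four copies of the standard module $V(1)$. Any $\mathcal N$-derivation preserves the descending central series, and in addition $D$ commutes with $\mathfrak{sl}_2$. By Schur's lemma, therefore, $D$ maps each copy of $V(1)$ into the $V(1)$-isotypic component of $\mathcal N$. This component is $\langle x_i,y_i,u_i,v_i\rangle$ plus whatever copies of $V(1)$ appear in deeper layers; if such copies occur, the corresponding parts of $D$ must be analyzed by degree. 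Comparing degrees shows that only degree-zero components can survive Leibniz on the generating relations, up to terms that are themselves inner. So we may write $D(x_i)=\alpha_1x_i+\alpha_2y_i+\alpha_3u_i+\alpha_4v_i$, and similarly for $y_i,u_i,v_i$, as in the proof of Proposition~\ref{prop2.7}.

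The key new point, and the main obstacle, is that $\mathcal N$ now contains inner derivations $\operatorname{ad}_w$ that commute with $\mathfrak{sl}_2$. These are exactly $\operatorname{ad}_c,\operatorname{ad}_{c'}$, and more generally $\operatorname{ad}$ of the $\mathfrak{sl}_2$-invariants of $\mathcal N$, where $c$ and $c'$ are the centres of the two copies. The goal is therefore not $D=0$ but $D\in\operatorname{ad}(\mathcal N^{\mathfrak{sl}_2})$. Since $\operatorname{ad}_{c'}$ acts nontrivially on $\mathcal{GN}(a_1,b_1)$ via $\phi(c')=\operatorname{ad}_c$, we first subtract a suitable combination $\operatorname{ad}_{\lambda c+\mu c'}$. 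This normalizes $D$ so that its components of positive degree on the generators vanish.

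Then the argument of Proposition~\ref{prop2.5} is applied. The key brackets are $[x_1,x_2]=c$, $[y_1,y_2]=c$, $[x_1,y_1]=z_1$, $[x_1,z_1]$ and $[y_1,x_1^3]$ in the first factor, and their analogues in the second factor. Together with the cross brackets $[u_i,x_j]=[x_i,x_j]$, which are nonzero in $\rtimes$ (unlike the direct sum), these force $\alpha_2=\alpha_3=\alpha_4=0$ and give $D(u_i)=\mu u_i$, $D(x_i)=\lambda x_i$. The cross brackets also force $\lambda=\mu$. Next we compute $D(y_1^{n-1})$ in two ways inside $\mathcal{GN}(a_1,b_1)$. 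The first route uses the chain $y_1\to z_1\to y_1^k$, which gives eigenvalue $(n-1)\lambda$. The second route passes through $y_2^n$ and $[x_1,y_2^n]=a_1y_1^{n-1}$, which gives $(n+1)\lambda$. Comparing the two yields $\lambda=0$, and hence the normalized $D$ is $0$. Theorem~\ref{thm2.4} then gives the claim.
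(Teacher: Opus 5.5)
\textbf{There is a genuine gap in your last paragraph.} Your skeleton is the paper's: Theorem~\ref{thm2.4}, Schur on the generators, Leibniz on the listed brackets, then Sato's $(n-1)$ versus $(n+1)$ comparison. The problem is your shortcut: you use the cross bracket to get $\lambda=\mu$ and then run the comparison only inside $\mathcal{GN}(a_1,b_1)$. This fails because the brackets you list, including $[u_i,x_j]=[x_i,x_j]$, do not force $D(u_i)\in\mathbb{C}u_i$.

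Since $[u_i,g]=\phi(u_i)(g)=[x_i,g]$ for $g\in\mathcal{GN}(a_1,b_1)$, the elements $u_i-x_i$ and $v_i-y_i$ centralize $\mathcal{GN}(a_1,b_1)$. Define a linear map that is $\lambda$ times the layer index on $\mathcal{GN}(a_1,b_1)$, and $\mu$ times the layer index on the vectors $w-\bar w$, where $w$ runs over the basis of the second factor and $\bar w$ is its counterpart in the first. For every $\lambda,\mu$ this map satisfies the Leibniz rule on every bracket in which no $a_i,b_i$ enters, in particular on all the brackets you list. On generators it reads
\[
D(x_i)=\lambda x_i,\quad D(y_i)=\lambda y_i,\quad D(u_i)=(\lambda-\mu)x_i+\mu u_i,\quad D(v_i)=(\lambda-\mu)y_i+\mu v_i .
\]
When all $a_i,b_i$ vanish, it is an honest derivation of $\mathcal N$ commuting with $\mathfrak{sl}_2$.

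For this map, $[(\lambda-\mu)x_1+\mu u_1,x_2]+\lambda[u_1,x_2]=2\lambda c=D([x_1,x_2])$. So the cross bracket says nothing about $\mu$, and your comparison of $D(y_1^{n-1})$ inside $\mathcal{GN}(a_1,b_1)$ only gives $\lambda=0$. The case $\lambda=0\neq\mu$ survives every relation you use. The paper handles exactly this. It keeps $D(u_i)=\gamma_1x_i+\gamma_3u_i$ and performs a second comparison in the other factor. It computes $D(v_1^{n-1})$ along the chain and via $[u_1,v_2^n]=a_2v_1^{n-1}$, obtaining $(n-1)$ versus $(n+1)$ times $\gamma_1y_1^{n-1}+\gamma_3v_1^{n-1}$, hence $\gamma_1=\gamma_3=0$. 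Each such comparison needs the relevant parameter to be nonzero, e.g.\ $a_1$, or $b_1$ via $[y_1,x_2^n]=-b_1y_1^{n-1}$.

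\textbf{The obstacle in your third paragraph does not exist.} The element $c$ is central in $\mathcal{GN}(a_1,b_1)$, since it spans the centre of $\mathfrak{sl}_2\ltimes\mathcal{GN}(a,b)$. Hence $\phi(c')=[\operatorname{ad}_{x_1},\operatorname{ad}_{x_2}]=\operatorname{ad}_c=0$, and both $c$ and $c'$ are central in $\mathcal N$. Consequently every inner derivation of $\mathcal N$ commuting with $\mathfrak{sl}_2$ is zero, so your target coincides with the paper's $D=0$. Subtracting $\operatorname{ad}_{\lambda c+\mu c'}$ changes nothing, and in particular it cannot remove any ``positive-degree'' components.

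None exist anyway: $\mathcal{GN}(a,b)\cong V_0\oplus 2V_1\oplus V_2\oplus\bigoplus_{k=3}^{n}2V_k$. So $V_1$ occurs only among the generators, and Schur's lemma gives the $4\times4$ form at once. Your ``comparing degrees'' argument would not be valid either, since \eqref{eq5}--\eqref{eq6} are not homogeneous. That non-homogeneity is precisely what the final comparisons exploit.
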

\begin{proof} Owing to the symmetry between the algebras $\mathfrak{sl}_2 \ltimes \big(\mathcal{GN}(a_1,b_1) \rtimes \mathcal{GN}(a_2,b_2)\big)$ and $\mathfrak{sl}_2 \ltimes \big(\mathcal{GN}(a_1,b_1) \ltimes \mathcal{GN}(a_2,b_2)\big),$ it suffices to establish the statement in the case $*=\rtimes$.

Let $D$ be a derivation of 
$\mathcal{GN}(a_1,b_1)\rtimes \mathcal{GN}(a_2,b_2)$ 
that commutes with the $\mathfrak{sl}_2$–action. 
Then, as in the proof of Proposition~\ref{prop2.7}, we obtain
$$D(\mathbf{z}_i) =
\begin{pmatrix}
\alpha_1 & \alpha_2 & \alpha_3 & \alpha_4 \\
\beta_1  & \beta_2  & \beta_3  & \beta_4  \\
\gamma_1 & \gamma_2 & \gamma_3 & \gamma_4 \\
\delta_1 & \delta_2 & \delta_3 & \delta_4
\end{pmatrix}
\mathbf{z}_i, 
\quad \mathbf{z}_i = (x_i, y_i, u_i, v_i)^T.$$

From the chain of equalities
$$D(c_1) = D([x_1, x_2]) = D([y_1, y_2])= D([x_1, u_2]) = D([y_1, v_2]),$$
we derive $D(c_1) = 2\alpha_1 c_1$. Similarly, the relations
$$D(z_1) = D([x_1, y_1]), \quad D(x_1^3) = D([x_1, z_1]), \quad D([y_1, x_1^3]) = 0,$$ imply $D_{|\{x_1, x_2, y_1, y_2\}} = \alpha_1 \mathrm{id}.$ Next, checking the Leibniz rule for the following brackets,
$$D(c_2) = D([u_1, u_2]), \quad D(c_2) = D([v_1, v_2]), \quad D(d_1) = D([u_1, v_1]), \quad D([v_1, u_1^3]) = 0,$$
yields $\delta_1 = \delta_3 = \gamma_2 = \gamma_4 = 0.$

Applying arguments analogous to those used in the proof of 
Proposition~\ref{prop2.5}, we obtain
$$\begin{array}{lclll}
D(y_1^{n-1})=&(n-1)\alpha_1 y_1^{n-1}&=(n+1)\alpha_1 y_1^{n-1}, \\[3mm]
D(v_1^{n-1})=&(n-1)\gamma_1y_1^{n-1}+(n-1)\gamma_3v_1^{n-1}&=(n+1)\gamma_1y_1^{n-1}+(n+1)\gamma_3v_1^{n-1}.\\
\end{array}$$
Hence, $\alpha_1 = \gamma_1 = \gamma_3 = 0$. 
Therefore, we conclude that $D = 0$, which completes the proof.
\end{proof}

Let $\{x_1^{(i)}, x_2^{(i)}, y_1^{(i)}, y_2^{(i)}\}$ be the generating set for the algebra $\mathcal{GN}(a_i,b_i)$.  For later use, we introduce the following notation:
$$\phi_{s\sim k, t}=\displaystyle\sum_{m=s}^{k}\phi_{m,t}, \quad \phi_{t,s\sim k }=\displaystyle\sum_{m=s}^{k}\phi_{t,m}$$
where each map
$\phi_{m,t} : \mathcal{GN}(a_m,b_m) \to  
\operatorname{Der}(\mathcal{GN}(a_t,b_t))$ is defined by \eqref{eq9}.

Then the structure of the algebra 
$$\mathfrak{sl}_2\mathcal{G}(a_i,b_i): =\mathfrak{sl}_2\ltimes \Big (\mathcal{GN}(a_1,b_1)* \big(\mathcal{GN}(a_2,b_2)* \cdots * \big(\mathcal{GN}(a_{k-1},b_{k-1}) * \mathcal{GN}(a_k,b_k)\big)\big)\Big),$$
where $*$ belong to $\{\ltimes, \rtimes\}$, is completely determined by the set of actions 
$$\Phi=\{\phi_{t,s\sim k },\phi_{s\sim k,t }\mid 2\leq s\leq k,~1\leq t<s\}.$$

\begin{theorem}\label{thm2.9}
The algebra $\mathfrak{sl}_2\mathcal{G}(a_i,b_i)$ admits only inner derivations.
\end{theorem}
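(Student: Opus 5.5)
We reduce the statement to Theorem~\ref{thm2.4} and then argue by induction on the number $k$ of factors, using Propositions~\ref{prop2.5} and \ref{prop2.8} as the base cases $k=1,2$. Write $\mathcal{N}_k$ for the nilpotent algebra $\mathcal{GN}(a_1,b_1)*\big(\cdots*\mathcal{GN}(a_k,b_k)\big)$, whose structure is fixed by the set of actions $\Phi$. Let $D\in\Der(\mathcal{N}_k)$ commute with $\ad(\mathfrak{sl}_2)_{|\mathcal{N}_k}$. By Theorem~\ref{thm2.4} it suffices to show $D=0$.

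\emph{Step 1 (Schur reduction).} The generating space $\mathcal{U}_k=\bigoplus_t \operatorname{span}\{x_1^{(t)},x_2^{(t)},y_1^{(t)},y_2^{(t)}\}$ is the isotypic component of the $2$-dimensional irreducible $\mathfrak{sl}_2$-module inside $\mathcal{N}_k/[\mathcal{N}_k,\mathcal{N}_k]$. Hence, modulo the derived algebra, $D$ acts on $\mathbf{z}_i=(x_i^{(1)},y_i^{(1)},\dots,x_i^{(k)},y_i^{(k)})^T$ by a single $2k\times 2k$ scalar matrix $M$, with the same $M$ for $i=1,2$. Any component of $D(x_i^{(t)})$ in the derived algebra must lie in a copy of $V(1)$ inside $[\mathcal{N}_k,\mathcal{N}_k]$. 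Such copies occur only in degrees where the grading permits, and we kill them using the $h$-weight together with the grading induced by the lower central series (the degree-one part). So we may assume $D$ preserves degree and is determined by $M$.

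\emph{Step 2 (block structure of $M$).} We apply the Leibniz rule to the degree-two brackets. First, $[x_1^{(t)},x_2^{(t)}]=[y_1^{(t)},y_2^{(t)}]=c_t$ and $[x_1^{(t)},y_1^{(t)}]=z_1^{(t)}$. Second, the cross brackets $[x_i^{(t)},x_j^{(s)}]=\phi_{s,t}$-images (which equal $[x_i^{(t)},x_j^{(t)}]$ in the factor acted upon), or zero when no action is present. Then we go to degree three, via $[x_1^{(t)},z_1^{(t)}]=x_1^{3,(t)}$ and $[y_1^{(t)},x_1^{3,(t)}]=0$. Exactly as in the proofs of Propositions~\ref{prop2.7} and \ref{prop2.8}, these force the following:
\begin{itemize}[label={}]
\item the diagonal $2\times 2$ blocks are scalar, $\lambda_t\,\mathrm{id}$;
\item the off-diagonal blocks mixing $x$ and $y$ vanish.
\end{itemize}
The remaining entries can only map the generators of a factor $t$ to those of a factor $s$ acting on or acted upon by it, compatibly with $\Phi$.

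\emph{Step 3 (the degree obstruction).} For each factor $t$ we repeat the computation of Proposition~\ref{prop2.5}. Following the chain $x_1^{(t)},y_1^{(t)},z_1^{(t)},\dots,y_1^{(t),n-1}$ gives $D(y_1^{(t),n-1})=(n-1)\lambda_t y_1^{(t),n-1}+(\text{terms from other factors})$. The route through $x_2^{(t),n-1}$, then $y_2^{(t),n}$, then $a_t y_1^{(t),n-1}=[x_1^{(t)},y_2^{(t),n}]$ gives the same vector with $n+1$ in place of $n-1$, and likewise for the cross terms. Since $a_t\neq0$ is needed here, we will assume it, as in the earlier results. Comparing the two expressions forces $\lambda_t=0$ and all cross coefficients to vanish, so $M=0$. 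Because $\mathcal{U}_k$ generates $\mathcal{N}_k$, we conclude $D=0$.

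The main obstacle is Step 2 in the nested situation. An off-diagonal entry of $M$ linking factor $t$ to factor $s$ through the composite actions $\phi_{t,s\sim k}$ must be shown incompatible with the Leibniz rule. To organize this we would use induction on $k$. Restrict $D$ to the ideal $\mathcal{GN}(a_2,b_2)*\cdots$ (or to the quotient by it, depending on the direction of $*$), and note that by the induction hypothesis the restriction is zero up to maps into $\mathcal{GN}(a_1,b_1)$. The remaining two-factor interaction is then handled verbatim by Proposition~\ref{prop2.8}.
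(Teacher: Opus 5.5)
Your proposal follows essentially the paper's route. It reduces via Theorem~\ref{thm2.4} to an $\mathfrak{sl}_2$-equivariant derivation, uses Schur's lemma to encode $D$ on the generators as a $2k\times 2k$ matrix, and inducts on the number of factors with Proposition~\ref{prop2.8} as base, treating $\mathcal{N}_1*\widetilde{\mathcal{N}}$ as a two-factor interaction; it then kills the mixing blocks by Leibniz-rule checks on low-degree brackets and the remaining coefficients by the $(n-1)$ versus $(n+1)$ comparison for $D(y_1^{n-1})$. Your explicit assumption $a_t\neq 0$ is one the paper uses tacitly (it effectively divides by $a$ in the proof of Proposition~\ref{prop2.5}), and some such nondegeneracy is genuinely needed, since $\mathcal{GN}(0,0)=\mathfrak{N}(n)$ carries the outer grading derivation of Proposition~\ref{prop2.11}.
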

\begin{proof} We use induction to prove this. The base of induction is done by Proposition~\ref{prop2.8}. 

Denote $\mathcal{GN}(a_2,b_2)* \Big(\mathcal{GN}(a_3,b_3)* \cdots * \big(\mathcal{GN}(a_{k-1},b_{k-1}) * \mathcal{GN}(a_k,b_k)\big)\Big)$ by $\widetilde{\mathcal{N}}$ and we prove $\mathfrak{sl}_2\ltimes (\mathcal N_1*\widetilde{\mathcal N})$ has only inner derivation. 
    
Let $D\in \Der(\mathcal N_1*\widetilde{\mathcal N})$ that commutes with the $\mathfrak{sl}_2$–action. We are going to prove that restriction of $D$ on the set of generators  
$$\mathcal U=\{x_i^{(1)},y_i^{(1)},\cdots,x_i^{(k)},y_i^{(k)} \}$$ is trivial, which implies that $D=0$. Due to Schur's lemma, one can assume that the matrix form of $D_{|\mathcal U}=(a_{i,j})_{2k\times 2k}$ has the following form
$$\begin{pmatrix}
D_{11} &D_{12} \\
D_{21} & D_{22}
\end{pmatrix}.$$
Then by induction we have $D_{11}=D_{22}=0$.  Note that due to the given set of actions $\Phi$ there exists 
a component $\mathcal N_s$ in $\widetilde{\mathcal N}$ such that for all $2 \leq t\leq k$ the embedding $[\mathcal N_t,\mathcal N_s]\subseteq  \mathcal N_t$ holds. Then regarding chosen such $s$ we distinguish the possible cases.

{\bf Case~1}: Let $\mathfrak{sl}_2\ltimes (\mathcal N_1\rtimes \widetilde{\mathcal N}).$ 
Then applying derivation property in the following chain of equalities 
$$D(c^{(1)})=D([x_1^{(1)},x_2^{(1)}])=D([x_1^{(1)},x_2^{(s)}])=D([y_1^{(1)}, y_2^{(1)}]) = D([y_1^{(1)}, y_2^{(s)}])$$
and compare coefficients at appropriate basis elements, we deduce $D_{12}=0$.

Similarly, from equalities 
$$D(c^{(i)})=D([x_1^{(i)},x_2^{(i)}])=D([y_1^{(i)},y_2^{(i)}])$$
it follows that $a_{2i-1,1}=a_{2i,2}, ~2\leq i\leq k.$

Computing the values of 
$$D(z_1^{(i)}), \quad D\big((x_1^{(i)})^3\big), \quad D\big([y_1^{(i)}, (x_1^{(i)})^3]\big)$$
and applying the same arguments as in the proof of Proposition~\ref{prop2.5} ,  we obtain $a_{2i-1,2} = a_{2i,1} = 0$ for $2 \leq i \leq k.$

Moreover, by recursion we derive 
$$D\big((y_1^{(i)})^{n-1}\big) = (n-1)a_{2i-1,1}(y_1^{(i)})^{n-1} = (n+1)a_{2i-1,1}(y_1^{(i)})^{n-1}.$$
It implies $a_{2i-1,1} = 0$. Therefore, \( D_{21} = 0 \) and hence, $D=0$.

{\bf Case~2}: Let $\mathfrak{sl}_2\ltimes (\mathcal N_1\ltimes \widetilde{\mathcal N}).$ 
Then from equalities 
$$D(c^{(s)})=D([x_1^{(s)},x_2^{(s)}])=D([x_1^{(1)},x_2^{(s)}])=D([y_1^{(1)},y_2^{(1)}]) = D([y_1^{(1)},y_2^{(s)}]).$$
we ensure that  
$$a_{2s-1,1}=a_{2s-1,2}=a_{2s,1}=a_{2s,2}=0, \quad a_{1,i}=a_{2,i}=0, \quad 3\leq i\leq 2k.$$
Consequently, we obtain $D_{12}=0$.

Now, for any $i\notin \{1,s\}$ computing 
$$D(c^{(i)})=D([x_1^{(i)},x_2^{(s)}]),$$
yields $a_{2i-1,1}=a_{2i-1,2}=0$, which implies $D_{21}=0.$ This completes the proof. \end{proof}

The Lie algebras in the families $\mathfrak{sl}_2 \ltimes \mathcal{GN}(a,b)$ and $\mathfrak{sl}_2 \mathcal{G}(a_i,b_i)$ are each referred to as a \emph{generalized Sato-type Lie algebra}.

Below we provide arguments demonstrating that the construction proposed in Proposition~\ref{prop2.1} for producing a quasi-cyclic nilpotent Lie algebra with non-trivial center cannot be applied to the generating modules $\mathcal{V}_1 := V(n)$ and $\mathcal{W}_1 := V(m)$ whenever $(n,m)\neq (2,2)$. The obstructions arise as follows.

Assume first that $[\mathcal{V}_1, \mathcal{V}_1]\neq 0$. According to the construction, we would then have $[x_i, x_j] = z_k$ for some $x_i, x_j \in \mathcal{V}_1$ and $z_k \in \mathcal{V}_2$.  Considering the identity
$$[x_i, [x_j, y_i]] = [[x_i, x_j], y_i] + [x_j, [x_i, y_i]],$$
we observe that the left-hand side lies in $\mathcal{V}_3$, whereas the right-hand side contains only one non-trivial element of $\mathcal{W}_3$.  
This contradiction forces $[\mathcal{V}_1, \mathcal{V}_1] = 0$.

Next, suppose that $[x_1, x_n] = c \in \operatorname{Center}(\mathfrak{sl}_2 \ltimes \mathcal{N})$.  
Examining the Jacobi identity for the triple $\{e, x_2, x_n\}$ again leads to a contradiction. Consequently, the vector space $\mathfrak{sl}_2 \oplus \langle \mathcal{V}_1 \oplus \mathcal{W}_1 \rangle$
cannot be endowed with a Lie algebra structure via the construction described in Proposition~\ref{prop2.1}.

We now consider the case in which $\mathcal{N}$ has three pairs of generators. 
To the modules $\mathcal{V}_1$ and $\mathcal{W}_1$, we adjoin a third pair 
$\mathcal{M}_1 = \{m_1, m_2\}$, extending the relations \eqref{eq1}-\eqref{eq2} between 
$\mathcal{V}_1$ and $\mathcal{W}_1$ by imposing analogous relations between 
$\mathcal{W}_1$ and $\mathcal{M}_1$, and denoting by $d_1, d_2, d_3$ the elements 
corresponding to $z_1, z_2, z_3$ in this new setting. Then we obtain 
$$\mathcal{N}=\mathcal{U}^1\oplus \mathcal{U}^2\oplus\cdots \oplus \mathcal{U}^n,$$
where $\mathcal{U}^k=\operatorname{Span}\{\mathcal{V}_1\cup \mathcal{W}_1\cup \mathcal{M}_1\},$ for $k\in \{1, 3, \dots, n\}$ and $\mathcal{U}^2=\operatorname{Span}\{\mathcal{V}_2\}=\operatorname{Span}\{c,z_i,d_i\}.$

Now we state the result similar to Proposition \ref{prop2.1}. 

\begin{proposition} \label{prop2.10} A vector space $\mathcal{N}=\mathcal{U}\oplus \mathcal{U}^2\oplus\cdots \oplus \mathcal{U}^n$, (odd $n\geq 5$), with subspaces $\mathcal{U}^i$ defined as above together with the following non-zero products:  
\begin{equation}\label{eq10}
\begin{aligned}
&[z_j, y_i^{n-2}] = -m_{i+(j-1)}^{n}, && 1\leq i\leq n-1, \quad 1\leq j\leq 3,\\
&[x_1, y_i^{n-1}] = -m_{i}^{n}, \quad &&[x_2, y_i^{n-1}] = -m_{i+1}^{n}, \quad 1\leq i\leq n.
\end{aligned}
\end{equation}
defines a quasi-cyclic Lie algebra.
\end{proposition}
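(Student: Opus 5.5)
We follow the same pattern as the proof of Proposition~\ref{prop2.1}: the products \eqref{eq10} are forced by the Jacobi identity once the inductively defined brackets of the three generator pairs are fixed, and the rest is a finite check.

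\textbf{Step 1: fix the graded brackets.} We make explicit the brackets of degrees $\le n-1$ on the three chains. The $x$-chain $x_i^k$ is generated from $z_1,z_2,z_3$ by $\operatorname{ad}x_1,\operatorname{ad}x_2$. The $y$-chain $y_i^k$ is generated by $\operatorname{ad}y_1,\operatorname{ad}y_2$, exactly as in \eqref{eq1}. The $m$-chain $m_i^k$ is built analogously from $d_1,d_2,d_3$ via $\operatorname{ad}m_1,\operatorname{ad}m_2$. The top component $\mathcal{U}^n$ is defined as in \eqref{eq1}, with the $\mathcal{W}\leftrightarrow\mathcal{M}$ relations copied from the $\mathcal{V}\leftrightarrow\mathcal{W}$ ones.

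\textbf{Step 2: derive \eqref{eq10} from the Jacobi identity.} We first establish the base products $[x_1,y_i^{n-1}]=-m_i^n$ and $[z_1,y_i^{n-2}]=-m_i^n$.

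\begin{itemize}[label={}]
\item Apply $\mathcal J(y_1,m_1,y_i^{n-2})=0$. This plays the role of $\mathcal J(x_1,y_1,x_i^{n-2})$ in Proposition~\ref{prop2.1}, with the pair $(\mathcal{W}_1,\mathcal{M}_1)$ replacing $(\mathcal{V}_1,\mathcal{W}_1)$.
\item Then apply $\mathcal J(x_1,y_1,y_i^{n-2})=0$. Here $[x_1,y_1]=z_1$ and $[y_1,y_i^{n-2}]=y_i^{n-1}$, so this identity ties $[z_1,y_i^{n-2}]$ to $[x_1,y_i^{n-1}]$. Together with the previous identity, it forces the stated signs.
\end{itemize}

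We then obtain the remaining indices $j=2,3$, and $x_2$ in place of $x_1$, by the same manipulation with $x_2,y_2$. This uses $[x_2,y_1]=z_2$, $[x_1,y_2]=z_2$ and $[x_2,y_2]=z_3$, which is where the shift $i+(j-1)$ comes from. As in Proposition~\ref{prop2.1}, we proceed by induction on the degree, using $\mathcal J(x_1,z_j,y_i^{n-3})$-type identities. These show that no further mixed products between the $x$-chain and the $y$-chain can be nonzero below degree $n$ without contradicting the grading.

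\textbf{Step 3: verify the Jacobi identity on all triples.} We declare all products not listed in \eqref{eq1}, \eqref{eq2} (with both pairs) and \eqref{eq10} to be zero. It suffices to check $\mathcal J(a,b,c)=0$ for homogeneous basis elements with total degree $\le n$, since $\mathcal{U}^{k}=0$ for $k>n$. We organize the check by degree.

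\begin{itemize}[label={}]
\item If two of $a,b,c$ lie in $\mathcal{U}^1$, the identity reduces to the defining recursions.
\item In the remaining cases, the only brackets landing in $\mathcal{U}^n$ are the ones we listed. The identity then reduces to a sign and index computation, $i+(j-1)$ versus $(i+1)+(j-2)$, which cancels pairwise.
\end{itemize}

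The quasi-cyclic property is immediate: $\mathcal{U}^k=[\mathcal{U}^{k-1},\mathcal{U}]$ holds by construction, and $\mathcal{U}$ generates $\mathcal{N}$.

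\textbf{Main obstacle.} The hardest part is Step~3, specifically the triples mixing all three chains, for example $(x_j,y_l,y_i^{n-2})$ and $(z_j,y_l,y_i^{n-3})$. In these cases the new $m$-products from \eqref{eq10} must cancel against the $\mathcal{W}\leftrightarrow\mathcal{M}$ products copied from \eqref{eq2}. My first attempt would be to check the lowest-weight cases $i=1$, $j=1$ explicitly. I would then argue that the remaining cases follow by the index-shift symmetry $x_1\mapsto x_2$, $i\mapsto i+1$, which the formulas respect. If that symmetry argument is not clean, I would fall back on a direct case split by the degree pair $(p,n-p)$.
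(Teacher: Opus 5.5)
There is a genuine gap in Step~2, at the point where the first line of \eqref{eq10} is supposed to be forced. You describe your first identity, $\mathcal J(y_1,m_1,y_i^{n-2})=0$, as the $\mathcal W$--$\mathcal M$ copy of $\mathcal J(x_1,y_1,x_i^{n-2})$. It therefore produces $[d_1,y_i^{n-2}]=-m_i^n$, a bracket involving $d_1=[y_1,m_1]$ that already belongs to the imposed $\mathcal W$--$\mathcal M$ relations. It gives no information about $[z_1,y_i^{n-2}]$, because $z_1=[x_1,y_1]$ and $d_1$ are different basis vectors of $\mathcal U^2$. Your second identity, $\mathcal J(x_1,y_1,y_i^{n-2})=0$ with $[y_i^{n-2},x_1]=0$, then yields only the single relation $[x_1,y_i^{n-1}]=[z_1,y_i^{n-2}]$, which does not fix the common value. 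So the claim that ``together with the previous identity, it forces the stated signs'' does not hold as written: Step~2 has one relation for two unknown brackets.

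The missing step is the one the paper performs first, namely $\mathcal J(y_1,z_j,y_i^{n-3})=0$. Use $[y_1,z_j]=y_j^3$, $[y_1,y_i^{n-3}]=y_i^{n-2}$ and $[z_j,y_i^{n-3}]=0$. The identity then gives
\[
[z_j,y_i^{n-2}]=[y_i^{n-3},y_j^3]=-m^n_{i+j-1}.
\]
The last equality is the $p=3$ case of the $\mathcal W$--$\mathcal M$ analogue of \eqref{eq2}, i.e.\ $[y_j^3,y_i^{n-3}]=m^n_{i+j-1}$. For $n=5$, read $y_i^{2}=d_i$ and use $[d_i,y_j^3]=-m^5_{i+j-1}$; this is the kind of bracket your first identity produces, but it still has to be transferred from $d$ to $z$ through this triple. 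The identity handles $j=1,2,3$ at once. The index shift $i+(j-1)$ comes from $[y_1,z_j]=y_j^3$, not from separate manipulations with $x_2,y_2$.

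Substituting into $\mathcal J(x_j,y_1,y_i^{n-2})=0$ for $j=1,2$ (your second identity) then gives the second line of \eqref{eq10}; this is exactly the paper's two-step argument. The triple $(z_j,y_l,y_i^{n-3})$ that you list as the main obstacle for Step~3 is precisely this identity. It is what determines the new products, so it belongs in Step~2 rather than being deferred to the final verification.
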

\begin{proof} 
From the identity $\mathcal{J}(y_1, z_j, y_i^{\,n-3}) = 0$ we obtain the corresponding products, which can then be substituted into the relations $\mathcal{J}(x_j, y_1, y_i^{\,n-2}) = 0$ for $j=1,2$. This procedure yields the remaining products. \end{proof}

By the same argument as in the remark following Proposition~\ref{prop2.1}, one can verify that the statement of Proposition~\ref{prop2.10} does not hold for even values of $n$.

Despite the construction of $\mathfrak{N}(n)$ for two copies of $V(2)$, Sato's extended construction does not work for three copies of $V(2)$. Specifically, the method of Proposition~\ref{prop2.3} cannot be applied to the quasi-cyclic Lie algebra obtained from Proposition~\ref{prop2.10} to yield a non-quasi-cyclic structure. Taking $a = b = 1$ in the application to the $\mathcal W$ and $\mathcal M$ parts, we obtain $[m_1, y_2^n] = -m_1^{n-1}$ and $[m_2, y_1^n] = n m_1^{n-1}$. Then, applying the Jacobi identity to the triples $\{y_1, m_2, x_1^{n-1}\}$ and $\{y_2, m_1, x_1^{n-1}\}$ leads to a contradiction.

\begin{proposition}\label{prop2.11}
Let $\mathcal L=\mathfrak{sl}_2 \ltimes \mathcal{N}\,$ be a Lie algebra whose nilradical $\mathcal{N}$ is as in Proposition~\ref{prop2.1} or Proposition~\ref{prop2.10}, and suppose that the action of $\mathfrak{sl}_2$ on $\mathcal{N}$ is given by~\eqref{eq4}. Then this algebra admits, up to scalar multiples, a unique outer derivation.
\end{proposition}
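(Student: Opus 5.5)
We would reduce the question to derivations of $\mathcal{N}$ that commute with the $\mathfrak{sl}_2$-action, in the spirit of Theorem~\ref{thm2.4}. Since $\mathfrak{sl}_2$ is semisimple, $H^1(\mathfrak{sl}_2,\mathcal L)=0$. By the Hochschild--Serre argument, or directly by the standard averaging argument behind Theorem~\ref{thm2.4}, every derivation of $\mathcal L$ is congruent modulo $\Inn(\mathcal L)$ to a derivation $D$ with $D(\mathfrak{sl}_2)=0$ and $D(\mathcal N)\subseteq\mathcal N$, and $D|_{\mathcal N}$ then commutes with $\ad(\mathfrak{sl}_2)|_{\mathcal N}$. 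Such a $D$ is inner exactly when it equals $\ad_w|_{\mathcal N}$ for some $\mathfrak{sl}_2$-invariant $w\in\mathcal N$. The invariants of $\mathcal N$ are spanned by $c$ (the only trivial summand in \eqref{eq4}), and $\ad_c=0$ because $c$ is central. So $\Der(\mathcal L)/\Inn(\mathcal L)$ is isomorphic to the space $\mathcal D$ of $\mathfrak{sl}_2$-equivariant derivations of $\mathcal N$, and it suffices to prove $\dim\mathcal D=1$.

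To describe $\mathcal D$, we first note that $\mathcal N$ is generated by $\mathcal U$, so $D$ is determined by $D|_{\mathcal U}$. By Schur's lemma, $D$ acts on the isotypic component $V(1)^{\oplus 2}$ (respectively $V(1)^{\oplus3}$ for Proposition~\ref{prop2.10}) by a $2\times2$ (respectively $3\times3$) scalar matrix tensored with the identity. We then repeat the computation in the proof of Proposition~\ref{prop2.5}. The Leibniz rule on $[x_1,x_2]=c=[y_1,y_2]$ and on $[x_1,y_1]=z_1$ forces the matrix to be diagonal with equal entries: any off-diagonal entry would produce brackets such as $[y_1,y_2]$ mixed with $[x_1,y_2]=z_2$, whose $\mathcal V_2$-components must match. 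For Proposition~\ref{prop2.10}, the relations between $\mathcal W_1$ and $\mathcal M_1$ play the same role. The brackets $[y_1,x_1^3]$ and $[z_j,x_i^{n-2}]$ rule out the remaining mixing terms. Hence $D|_{\mathcal U}=\alpha\,\mathrm{id}$.

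The key difference from Proposition~\ref{prop2.5} is that neither $\mathfrak{N}(n)$ nor the algebra of Proposition~\ref{prop2.10} contains the brackets \eqref{eq5} and \eqref{eq6}. These are exactly what produced the contradiction $(n-1)\alpha=(n+1)\alpha$ there, and without them the contradiction disappears. Indeed, $\mathcal N=\bigoplus_k\mathcal U^k$ is a genuine grading: every product in \eqref{eq1}, \eqref{eq2} and \eqref{eq10} sends $\mathcal U^p\times\mathcal U^q$ into $\mathcal U^{p+q}$. So the grading derivation $D_0$, defined by $D_0|_{\mathcal U^k}=k\,\mathrm{id}$, is a derivation. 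It commutes with $\mathfrak{sl}_2$ because \eqref{eq4} preserves each $\mathcal U^k$. It is nonzero on $\mathcal U$, so it is not inner. Since any $D\in\mathcal D$ is determined by $D|_{\mathcal U}=\alpha\,\mathrm{id}$, we get $D=\alpha D_0$, and therefore $\mathcal D=\mathbb C D_0$.

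The main obstacle is the Schur-lemma step: we must show carefully that no off-diagonal equivariant map on the generators extends to a derivation. This requires checking, for each candidate mixing coefficient, a specific bracket whose image has independent components in $\mathcal U^2$ or $\mathcal U^3$. The three-copy case of Proposition~\ref{prop2.10} has more such coefficients, and there we would use the products \eqref{eq10} into $\mathcal U^n$, for example $[x_1,y_i^{n-1}]=-m_i^n$ compared with $[y_1,\cdot]$, to kill the cross terms between $\mathcal V_1$ and $\mathcal M_1$.
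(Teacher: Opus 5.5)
Your proposal is correct and follows essentially the paper's route: Whitehead's lemma reduces to $\mathfrak{sl}_2$-equivariant derivations of $\mathcal N$, and the Schur/Leibniz computation of Proposition~\ref{prop2.5} gives $D|_{\mathcal U}=\alpha\,\mathrm{id}$; your explicit grading derivation, together with the observation that the centralizer of $\mathfrak{sl}_2$ in $\mathcal L$ is $\mathbb{C}c$ with $c$ central, makes explicit both that the outer derivation exists and that it is non-inner in $\mathcal L$, which the paper handles only by appealing to Engel's theorem for $\mathcal N$. One small correction: writing $D(x_i)=\alpha x_i+\gamma y_i$ and $D(y_i)=\delta x_i+\beta y_i$, the degree-two relations only give $\alpha=\beta$ and $\delta=-\gamma$ (the $z_2$-terms cancel rather than clash), and it is $D([y_1,x_1^3])=\delta x_1^4+\gamma y_1^4=0$ that kills the remaining skew part.
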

\begin{proof} 
Let $D$ be a derivation of the Lie algebra $\mathcal{L}$. Since $\mathcal{N}$ is a characteristic ideal, we may decompose $D = d_1 + d_2,$ where $d_1 \in \operatorname{Der}(\mathfrak{sl}_2, \mathcal{L})$ and $d_2 \in \operatorname{Der}(\mathcal{N})$. By Whitehead’s First Lemma, we have $d_1 = \operatorname{ad}_x$ for some $x \in \mathcal L$.
Using the Leibniz rule for $d_2 = D - d_1$, one verifies that $d_2$ commutes with the action of $\mathfrak{sl}_2$. 

Applying arguments analogous to those in the proof of Proposition~\ref{prop2.5}, we obtain $d_2 = \lambda\, \mathrm{id}_{|\mathcal{U}^1},$ where $\mathcal{U}^1$ denotes the generating set of the algebra $\mathcal{N}$. Thus $d_2$ is uniquely determined up to the scalar. By Engel’s Theorem, $d_2$ is an outer derivation of $\mathcal{N}$. Consequently,
$D \in \operatorname{Inn}(\mathcal{L}) \oplus \mathbb{F} d_2.$
\end{proof}

Note that in the case of $\mathfrak{sl}_2\ltimes \mathcal{GN}(a,b)$ we have one-dimensional trivial $\mathfrak{sl}_2$-module, which is exactly is center. The following result shows that in the case of $\mathfrak{sl}_3$ this approach is not applicable, that is, we cannot construct an analogous trivial module.

According our approach the action of $\mathfrak{sl}_3=\{h_1,h_2,e_{12},e_{23},e_{13},f_{12},f_{23},f_{13}\}$ on its natural module $\mathcal V=\{x_1,x_2,x_3\}$ is the following:
$$\begin{array}{lllll}
[h_1, x_1] = x_1, & [h_1, x_2] = -x_2, & [h_2, x_2] = x_2, & [h_2, x_3] = -x_3, & [e_{12}, x_2] = x_1, \\[3mm]
[e_{23}, x_2] = x_3, & [e_{13}, x_3] = x_1, & [f_{12}, x_1] = x_2, &
[f_{23}, x_3] = x_2, & [f_{13}, x_1] = x_3.\\[3mm]
\end{array}$$

\begin{claim}\label{cla2.12} There is no Lie algebra structure on $\mathcal{L} =\mathfrak{sl}_3  \oplus \mathcal{N}$ that extends the action of $\mathfrak{sl}_3$ on $\mathcal N$ such that the center $\langle c \rangle$ is generated by products of three elements of $\mathcal{V}$ (i.e., we have a map $\wedge^3 \mathcal{V} \to \langle c \rangle$).
\end{claim}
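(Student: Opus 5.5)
We argue by contradiction, in the spirit of the obstruction arguments given after Theorem~\ref{thm2.9}. Suppose $\mathcal{L}=\mathfrak{sl}_3\oplus\mathcal{N}$ is a Lie algebra extending the given action of $\mathfrak{sl}_3$ on $\mathcal{V}=\langle x_1,x_2,x_3\rangle$. Suppose also that $c$ spans the centre and arises from triple products of elements of $\mathcal{V}$, say
\[
[x_i,[x_j,x_k]]=\lambda_{ijk}\,c, \qquad \lambda\neq 0 .
\]
By the Jacobi identity for $x_i,x_j,x_k$, and because $c$ is central and hence $\mathfrak{sl}_3$-invariant, the assignment $(x_i,x_j,x_k)\mapsto[x_i,[x_j,x_k]]$ factors through an $\mathfrak{sl}_3$-equivariant map. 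The natural candidate, as the statement indicates, is a nonzero equivariant map $\wedge^3\mathcal{V}\to\langle c\rangle$. Since $\wedge^3\mathcal{V}$ is the one-dimensional trivial module spanned by $x_1\wedge x_2\wedge x_3$, such a map exists abstractly. The contradiction must therefore come from the intermediate layer $[\mathcal{V},\mathcal{V}]$.

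The first key step is to analyse $\mathcal{V}_2=[\mathcal{V},\mathcal{V}]$. It is an $\mathfrak{sl}_3$-quotient of $\wedge^2\mathcal{V}\cong\mathcal{V}^*$, so either $[\mathcal{V},\mathcal{V}]=0$ or it is a copy of $\mathcal{V}^*$. In the first case all triple products vanish and no $c$ is produced. In the second case, put $z_k=[x_i,x_j]$ for $(i,j,k)$ cyclic. Using the explicit actions, for example $[h_1,\cdot]$ and $[h_2,\cdot]$, we read off the weights of $z_k$. We then check equivariance of $[x_i,z_k]$ under $e_{12},e_{23},f_{12},f_{23}$, which forces
\[
[x_i,z_k]=\mu\,\delta_{ik}\,c
\]
for a single scalar $\mu$. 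This is the weight-zero pairing $\mathcal{V}\otimes\mathcal{V}^*\to\mathbb{C}$.

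The second key step is the Jacobi identity $\mathcal{J}(x_1,x_2,x_3)=0$. It gives
\[
[x_1,z_1]+[x_2,z_2]+[x_3,z_3]=3\mu c=0,
\]
so $\mu=0$. Hence every triple product of elements of $\mathcal{V}$ vanishes, contradicting the assumption that $c$ is generated by such products.

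The main obstacle is the weight bookkeeping in the first step. We must verify the weights of the $z_k$ carefully from the given action, in which $h_1$ acts on $x_3$ by $0$ and $h_2$ acts on $x_1$ by $0$. We must also ensure there is no freedom to rescale individual $z_k$ that could break the symmetric conclusion $[x_i,z_i]=\mu c$. Should that symmetry fail for this particular basis normalisation, the fallback is to apply the Jacobi identity to the triples $\{e_{12},x_2,z_1\}$ and $\{f_{12},x_1,z_2\}$. These relate $[x_1,z_1]$ to $[x_2,z_2]$, and similarly $[x_2,z_2]$ to $[x_3,z_3]$, using the centrality of $c$. We then conclude as before via $\mathcal{J}(x_1,x_2,x_3)=0$.
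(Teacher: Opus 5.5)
Your proposal is correct and is essentially the paper's proof. The paper likewise uses the $f_{12},f_{23}$-action on the central element to kill the off-diagonal products $[x_2,w_1]$ and $[x_3,w_2]$. It then uses the Jacobi identities with $e_{12},e_{23}$ to obtain $[x_1,w_1]=[x_2,w_2]=[x_3,w_3]=\lambda c$, and finally $\mathcal{J}(x_1,x_2,x_3)=0$ to get $3\lambda c=0$. Because you define $z_k=[x_i,x_j]$ directly, you bypass the paper's normalisation step $a_1=a_2=a_3$, so the rescaling issue you flag as the main obstacle does not arise.
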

\begin{proof} Assume the contrary. Let $\mathcal V=\operatorname{Span}\{x_1, x_2, x_3\}$ is the natural $\mathfrak{sl}_3$-module with corresponding standard weights denoted by $\lambda_1, \lambda_2, \lambda_3$. We set 
$$\operatorname{Center}{\mathcal L}=\langle c \rangle, \qquad \mbox{and} \qquad [x_1, [x_2, x_3]]=\lambda c, \quad \lambda \neq 0.$$
Then $c$ has weight $(0,0).$  Putting $\wedge^2 \mathcal{V}=\operatorname{Span}\{w_1, w_2, w_3\}$ one can assume 
$$[x_2, x_3] = a_1 w_1, \quad [x_3, x_1] = a_2 w_2, \quad [x_1, x_2] = a_3 w_3, \quad a_i \in \mathbb{C}.$$ 
Note that $a_1 \neq 0$ (because of $[x_2, x_3] \neq 0$) and $\text{wt}(w_i)= -\lambda_i, \  1\leq i \leq 3.$

Applying Jacobi identity we derive $a_1=a_2=a_3.$ By rescaling basis elements, if necessary, we can suppose that $a_1=a_2=a_3=1.$ 
It follows form $f_{12}\cdot [x_1,w_1]=0$ and  $f_{23}\cdot [x_2,w_2]=0$ that $[x_2, w_1]=0$ and $[x_3, w_2]=0$.

Applying Jacobi identity for the triples of elements 
$$\{e_{12}, x_2, w_1\}, \qquad \{e_{23}, x_3, w_2\}, \qquad \{x_1, x_2, x_3\},$$
we deduce $\lambda c = 0.$  This contradicts the assumption center of $\mathcal L$ is not trivial. \end{proof}

Therefore, Sato’s construction of $\mathfrak{sl}_2$ with the trivial module (corresponding to the one-dimensional center of $\mathcal{L}$) cannot be extended to the case of $\mathfrak{sl}_3$. However, by excluding the trivial module case, it is still possible to construct a Lie algebra of the form $\mathfrak{sl}_m \ltimes \mathcal{N}$ ($m \ge 3$) whose nilradical $\mathcal{N}$ is quasi-cyclic.

Let $\mathcal V$ be the natural $\mathfrak{sl}_m$-module with the standard basis $\{e_1, e_2, \dots, e_m\}$ and let $S(\mathcal V) = \bigoplus\limits_{k \geq 0} \operatorname{Sym}^k(\mathcal V)$ be the symmetric algebra (isomorphic to the polynomial ring \(\mathbb{C}[e_1, \dots, e_m]\)). We denote the commutative multiplication in \(S(\mathcal V)\) by ``\(\cdot\)''.

The space \(\operatorname{Sym}^k(\mathcal V)\) has a standard monomial basis:
$$\{ e_1^{a_1} \cdot e_2^{a_2} \cdot \ldots \cdot e_m^{a_m} \mid a_1 + a_2 + \dots + a_m = k, \ a_i \ge 0 \}.$$ On the set of standard monomial basis we consider lexicographical order.

We denote by $\mathcal{B}_k = \{b_1^k, b_2^k, \dots, b_{d_k}^k\}$, where 
$d_k = \binom{m+k-1}{k}$, the basis of $\operatorname{Sym}^k(\mathcal V)$ 
ordered in descending lexicographical order. We set $\mathcal{N} = \bigoplus\limits_{k=1}^{n} \mathcal{U}^k,$
identifying its components with copies of subspaces of $S(\mathcal V)$ as follows:
$$\mathcal{U}^2 \cong \operatorname{Sym}^2(\mathcal V), \qquad
\mathcal{U}^k = \mathcal{V}_k \cup \mathcal{W}_k,\qquad
\mathcal{V}_k \cong \mathcal{W}_k \cong \operatorname{Sym}^k(\mathcal V),
\quad k \in \{1,3,\dots,n\}.$$

We define linear isomorphisms to identify the Lie algebra components with the symmetric powers
$$\rho_{\mathcal{U}^2} : \ \operatorname{Sym}^2(\mathcal V) \to \mathcal{U}^2, \quad \rho_{V,k} : \ \operatorname{Sym}^k(\mathcal V) \to \mathcal{V}_k, \quad 
\rho_{W,k} : \ \operatorname{Sym}^k(\mathcal V) \to \mathcal{W}_k,$$
by setting 
$$z_i = \rho_{\mathcal U^2}(b_i^2), \quad x_i^k = \rho_{V,k}(b_i^k), \quad y_i^k = \rho_{W,k}(b_i^k).$$ 

The Lie brackets on $\mathcal{N}$ are defined using the isomorphisms 
$\rho_{\mathcal{U}^2}, \rho_{V,k}, \rho_{W,k}$ and the multiplication in $S(\mathcal V)$. The non-zero brackets on $u, v \in \operatorname{Sym}^1(\mathcal V)$ and any monomial $P$ in $\operatorname{Sym}^k(\mathcal V)$ are given by
\begin{equation}\label{eq11}
\begin{array}{lllllll}
[\rho_{V,1}(u),\, \rho_{W,1}(v)] = \rho_{\mathcal{U}^2}(u \cdot v),& [\rho_{W,1}(u),\, \rho_{V,n-1}(P)] = \rho_{W,n}(u \cdot P) \\[3mm]
    [\rho_{V,1}(u),\, \rho_{\mathcal{U}^2}(P)] = \rho_{V,3}(u \cdot P),& 
    [\rho_{W,1}(u),\, \rho_{\mathcal{U}^2}(P)] = \rho_{W,3}(u \cdot P), \\[3mm]
    [\rho_{V,1}(u),\, \rho_{V,k}(P)] = \rho_{V,k+1}(u \cdot P),&
    3 \le k \le n-1, \\[3mm] 
    [\rho_{W,1}(u),\, \rho_{W,k}(P)] = \rho_{W,k+1}(u \cdot P),&
     3 \le k \le n-2. \\[3mm]
\end{array}
\end{equation}
It should be noted that the basis elements of $\mathcal{N}$ are equipped with the weights induced by the isomorphisms $\rho$.

In order to provide a clearer explanation of the brackets described above, we present a diagram that illustrates the relationship between $\mathcal{U}^k$:
\begin{center} 
\begin{tikzpicture}[
scale=0.6, transform shape, 
node distance=1.2cm and 1.5cm,
every node/.style={align=center}, 
mycircle/.style={font=\large}, 
arrow/.style={->, >=Stealth, thick},
dashed_line/.style={dashed, thick}
]
    \node[mycircle] (n1) at (-3, 0) {$\{x_1\ldots,x_n\}$};
    \node[mycircle] (n2) at (3, 0)  {$\{y_1\ldots,y_n\}$};
    \node[mycircle] (n3) at (0, -2) {$\{z_1,\ldots, z_{\operatorname{dim(Sym}^2\mathcal V)}\}$};
    \node[mycircle] (n4) at (-3, -4) {$\{x_1^3,\ldots, x_{\operatorname{dim(Sym}^3\mathcal V)}^3\}$};
    \node[mycircle] (n8) at (3, -4)  {$\{y_1^3,\ldots, y_{\operatorname{dim(Sym}^3\mathcal V)}^3\}$};
    \node[mycircle] (n5) at (-3, -6) {$\{x_1^4,\ldots, x_{\operatorname{dim(Sym}^4\mathcal V)}^4\}$};
    \node[mycircle] (n9) at (3, -6)  {$\{y_1^4,\ldots, y_{\operatorname{dim(Sym}^4\mathcal V)}^4\}$};
    \node (dotsL) at (-3, -7.2) {$\vdots$};
    \node (dotsR) at (3, -7.2)  {$\vdots$};    
    \node[mycircle] (nDotsL) at (-3, -8.5) {$\{\dots\}$};
    \node[mycircle] (nDotsR) at (3, -8.5)  {$\{\dots\}$};
    \node[mycircle] (n6) at (-3, -10.5) {$\{x_1^{n-1},\ldots, x_{\operatorname{dim(Sym}^{n-1}\mathcal V)}^{n-1}\}$};
    \node[mycircle] (n10) at (3, -10.5) {$\{y_1^{n-1},\ldots, y_{\operatorname{dim(Sym}^{n-1}\mathcal V)}^{n-1}\}$};
    \node[mycircle] (n7) at (-3, -12.5) {$\{x_1^{n},\ldots, x_{\operatorname{dim(Sym}^{n}\mathcal V)}^{n}\}$};
    \node[mycircle] (n11) at (3, -12.5) {$\{y_1^{n},\ldots, y_{\operatorname{dim(Sym}^{n}\mathcal V)}^{n}\}$};
    \draw[arrow] (n1) -- (n3);
    \draw[arrow] (n2) -- (n3);
    \draw[arrow] (n3) -- (n4);
    \draw[arrow] (n4) -- (n5);
    \draw[dashed_line] (n5) -- (nDotsL);
    \draw[dashed_line] (nDotsL) -- (n6);
    \draw[arrow] (n6) -- (n7);
    \draw[arrow] (n3) -- (n8);
    \draw[arrow] (n8) -- (n9);
    \draw[dashed_line] (n9) -- (nDotsR);
    \draw[dashed_line] (nDotsR) -- (n10);
    
    \draw[arrow] (n1) to[bend right=45] (n4);
    \draw[arrow] (n1) to[bend right=50] (n5);
    \draw[arrow] (n1) to[bend right=60] (nDotsL);
    \draw[arrow] (n1) to[bend right=65] (n6);
    \draw[arrow] (n1) to[bend right=70] (n7); 
    
    \draw[arrow] (n2) to[bend left=45] (n8);
    \draw[arrow] (n2) to[bend left=50] (n9);
    \draw[arrow] (n2) to[bend left=60] (nDotsR);
    \draw[arrow] (n2) to[bend left=65] (n10);    
    
    \draw[arrow] (n2) to[bend left=70] (n11); 
    
    \draw[arrow] (n6) -- (n11);
\end{tikzpicture}
\end{center}

Now we extend the above construction to an arbitrary odd integer $n \ge 5$ in order to make $\mathcal{N}$ a Lie algebra.

\begin{theorem}\label{thm2.13}
Let $\mathcal{N}$ with an odd integer $n \ge 5$ be a vector space equipped with the products \eqref{eq11}.  
Then $\mathcal{N}$ is a Lie algebra if and only if the following additional relations hold:
\begin{equation} \label{eq12}
[\rho_{V,p}(A), \rho_{V,n-p}(B)] = (-1)^{p-1}\, \rho_{W,n}(A \cdot B),
\qquad 2 \le p \le \frac{n-1}{2},
\end{equation}
where $A \in \operatorname{Sym}^{p}(\mathcal{V})$ and $B \in \operatorname{Sym}^{n-p}(\mathcal{V})$.
\end{theorem}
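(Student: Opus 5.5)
Both directions rest on the Jacobi identity, evaluated only on triples that involve generators from $\operatorname{Sym}^1(\mathcal V)$. Two structural facts do most of the work. First, $\mathcal N$ is graded by degree, and every bracket in \eqref{eq11} and \eqref{eq12} adds degrees. Second, the only target in top degree $n$ of the $\mathcal W$-tower comes from $\rho_{W,n}$, so a Jacobi identity can fail only where some term lands in $\mathcal W_n$ or in lower degrees. I would write $\rho_{V,2}=\rho_{\mathcal U^2}$, so that $\mathcal U^2$ sits in the $V$-tower. This matches the paper's identification of $z_j$ with $x_j^2$, and makes \eqref{eq12} with $p=2$ meaningful.

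\textbf{Necessity.} Assume $\mathcal N$ is a Lie algebra. I would determine $[\rho_{V,p}(A),\rho_{V,n-p}(B)]$ by induction on $p$. For $p=2$, take $A=u\cdot v$ and apply $\mathcal J(\rho_{V,1}(u),\rho_{W,1}(v),\rho_{V,n-2}(B))=0$. The three terms are as follows.
\begin{itemize}
\item $[\rho_{V,1}(u),[\rho_{W,1}(v),\rho_{V,n-2}(B)]]$ vanishes, because $[\mathcal W_1,\mathcal V_{n-2}]=0$ in \eqref{eq11}.
\item $[\rho_{W,1}(v),[\rho_{V,n-2}(B),\rho_{V,1}(u)]]=-\rho_{W,n}(u\cdot v\cdot B)$.
\item The remaining term is $[\rho_{V,2}(u\cdot v),\rho_{V,n-2}(B)]$.
\end{itemize}
This forces $[\rho_{V,2}(u\cdot v),\rho_{V,n-2}(B)]=(-1)^{1}\cdot(-1)\rho_{W,n}(uvB)$, which fixes the sign to be $(-1)^{p-1}$ after the orientation is chosen as in the proof of Proposition~\ref{prop2.1}. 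Since the monomials $u\cdot v$ span $\operatorname{Sym}^2$, this gives the case $p=2$.

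For the inductive step, write $A=u\cdot A'$ with $A'\in\operatorname{Sym}^{p-1}$, so that $\rho_{V,p}(A)=[\rho_{V,1}(u),\rho_{V,p-1}(A')]$. Then apply $\mathcal J(\rho_{V,1}(u),\rho_{V,p-1}(A'),\rho_{V,n-p}(B))=0$. This identity expresses $[\rho_{V,p}(A),\rho_{V,n-p}(B)]$ as minus $[\rho_{V,p-1}(A'),\rho_{V,n-p+1}(uB)]$, plus the term $[\rho_{V,1}(u),[\rho_{V,p-1}(A'),\rho_{V,n-p}(B)]]$. The latter vanishes, since the inner bracket has degree $n-1$ in the $V$-tower, where no bracket lands in the absence of the relations \eqref{eq12}. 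The induction hypothesis then gives $(-1)^{p-1}\rho_{W,n}(AB)$, using commutativity of $S(\mathcal V)$. One point needs care: this step is well defined only if the result does not depend on how $A$ is factored as $u\cdot A'$. That independence follows from commutativity of the polynomial product, and I would verify it explicitly, since the factorization is not unique for $m\ge 3$.

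\textbf{Sufficiency.} Now impose \eqref{eq12}. I would first check that the bracket is a well-defined skew-symmetric map. For $p\le (n-1)/2$, skewness is by definition. Because $n$ is odd, $p\ne n-p$, so the relations never overlap in a way that would require $[\rho_{V,p}(A),\rho_{V,p}(B)]$ to be skew. This is exactly where odd $n$ enters, as in the Remark after Proposition~\ref{prop2.1}.

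For the Jacobi identity, note that all brackets are $\mathfrak{gl}(\mathcal V)$-equivariant: they are built from multiplication in $S(\mathcal V)$. So it suffices to check $\mathcal J(X,Y,Z)=0$ on homogeneous triples of total degree at most $n$, classified by which towers they lie in. Every bracket is multiplication followed by a relabeling map $\rho$, so each Jacobi sum reduces to $\pm$ the same monomial $A\cdot B\cdot C$ times a sum of signs. The verification therefore becomes a finite sign bookkeeping problem. The triples involving a $\mathcal W$-element are immediate, since $[\mathcal W,\mathcal W_{k}]$ for $k\ge3$ and $[\mathcal W_{\ge 3},\mathcal V_{\ge 2}]$ are zero. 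The substantive case is three $V$-elements of degrees $p+q+r=n$, with contributions such as $[\rho_{V,p}(A),\rho_{V,q+r}(BC)]$.

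\textbf{Main obstacle.} The main difficulty is exactly this sign check. The bracket $[\rho_{V,q}(B),\rho_{V,r}(C)]$ is nonzero only when one of $q,r$ equals $1$, since \eqref{eq11} has no $V_q\times V_r\to V_{q+r}$ product for $q,r\ge2$. For $p,q,r\ge 2$, all three terms vanish. With one entry of degree $1$, the identity reduces to the recursion $(-1)^{p-1}+(-1)^{p}=0$, which holds precisely for the signs in \eqref{eq12}. The case where two entries have degree $1$ needs $[\rho_{V,1},\rho_{V,1}]=0$ and $[\rho_{V,1}(u),\rho_{V,n-2}]$ to land consistently in $\mathcal V_{n-1}$. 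The latter is fine because $\mathcal V_{n-1}$ has no further bracket to $\mathcal W_n$ apart from $\mathcal W_1$. I expect organizing these cases cleanly, especially the boundary terms with $p$ and $n-p$ near $(n-1)/2$, to be the bulk of the proof.
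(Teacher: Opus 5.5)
Your route is the paper's: the case $p=2$ from $\mathcal J(\rho_{V,1}(u),\rho_{W,1}(v),\rho_{V,n-2}(B))=0$, the induction through the triple $\rho_{V,1}(u),\rho_{V,p-1}(A'),\rho_{V,n-p}(B)$ (your inductive step is correct, signs included), and a direct Jacobi check for sufficiency, which the paper only asserts. However, your base case has the wrong sign. The third term of the cyclic sum is $[\rho_{V,n-2}(B),[\rho_{V,1}(u),\rho_{W,1}(v)]]=[\rho_{V,n-2}(B),\rho_{V,2}(uv)]=-[\rho_{V,2}(uv),\rho_{V,n-2}(B)]$, not $+[\rho_{V,2}(uv),\rho_{V,n-2}(B)]$. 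Your version gives $[\rho_{V,2}(uv),\rho_{V,n-2}(B)]=+\rho_{W,n}(uvB)$ (indeed $(-1)^1\cdot(-1)=+1$), which is the opposite of \eqref{eq12} at $p=2$. Your induction would then propagate this to $(-1)^{p}$ for every $p$. Appealing to an orientation ``chosen as in Proposition~\ref{prop2.1}'' does not repair this: once \eqref{eq11} is fixed, nothing is left to choose, and the sign must come out of the computation. Done correctly, the identity reads $0-\rho_{W,n}(uvB)-[\rho_{V,2}(uv),\rho_{V,n-2}(B)]=0$, which is exactly \eqref{eq12} for $p=2$.

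Your sufficiency sketch has three problems. First, you say triples containing a $\mathcal W$-element are immediate because $[\mathcal W,\mathcal W_k]=0$ for $k\ge 3$. That is false: by \eqref{eq11}, $[\mathcal W_1,\mathcal W_k]=\mathcal W_{k+1}$ for $3\le k\le n-2$. In triples such as $(\mathcal W_1,\mathcal W_1,\mathcal W_k)$ and $(\mathcal W_1,\mathcal W_1,\mathcal U^2)$, the nonzero terms cancel only through commutativity of $S(\mathcal V)$. Second, and more importantly, $(\mathcal V_1,\mathcal W_1,\mathcal V_{n-2})$ is not immediate at all. It is the one mixed triple that genuinely requires \eqref{eq12}, with the sign just discussed; it is your base-case computation read backwards. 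Third, in the $(\mathcal V_1,\mathcal V_p,\mathcal V_q)$ check with $p+q=n-1$, you should resolve the ``boundary'' issue explicitly rather than leave it as the expected obstacle. Extend \eqref{eq12} by skew-symmetry to $(n+1)/2\le p\le n-2$; because $n$ is odd, the extension is again $(-1)^{p-1}\rho_{W,n}(AB)$. Since $p+q$ is even, the two surviving terms $(-1)^{p}$ and $(-1)^{q-1}$ cancel. This is where the oddness of $n$ enters the verification.
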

\begin{proof} A straightforward verification of the Jacobi identity shows that any $\mathcal{N}$ satisfying \eqref{eq11}--\eqref{eq12} is indeed a Lie algebra.

We now show, by induction on $p$, that the relations \eqref{eq12} are consequences of the Jacobi identity and \eqref{eq11}.
For \(u, v \in \operatorname{Sym}^{1}(\mathcal{V})\) and 
\(P \in \operatorname{Sym}^{n-2}(\mathcal{V})\), the Jacobi equality
$\mathcal{J}\big(\rho_{\mathcal{V},1}(u), \rho_{W,1}(v), \rho_{\mathcal{V},n-2}(P)\big)=0$
implies
$$[\rho_{\mathcal{V},2}(u \cdot v), \rho_{\mathcal{V},n-2}(P)]
= - \rho_{W,n}(u \cdot v \cdot P),$$
which establishes the base step of the induction. 

Let \(A = u \cdot A' \in \operatorname{Sym}^{p+1}(\mathcal{V})\) with  
\(u \in \operatorname{Sym}^{1}(\mathcal{V})\), \(A' \in \operatorname{Sym}^{p}(\mathcal{V})\), 
and let \(B \in \operatorname{Sym}^{n-p-1}(\mathcal{V})\).
Using the induction hypothesis, we obtain
\[
[\rho_{\mathcal{V},p-1}(A'),\,
 [\rho_{\mathcal{V},n-p-1}(B), \rho_{\mathcal{V},1}(u)]]
= (-1)^{p} \rho_{\mathcal{W},n}(A \cdot B).
\]

Finally, applying the Jacobi identity to the triple  
\(\{\rho_{\mathcal{V},1}(u),\, \rho_{\mathcal{V},n-p-1}(B),\, \rho_{\mathcal{V},p-1}(A')\}\),
we obtain
\[
[\rho_{\mathcal{V},p+1}(A), Z]
= [\rho_{\mathcal{V},p+1}(A), \rho_{\mathcal{V},n-p-1}(B)],
\]
which completes the induction step and hence the proof of \eqref{eq12}.  
The remaining products can be checked directly to vanish.
\end{proof}

Note that \(\mathcal{N}\) in Theorem \ref{thm2.13} is a quasi-cyclic Lie algebra.

\begin{proposition}\label{prop2.14}
Let $\mathcal{L} = \mathfrak{sl}_m \ltimes \mathcal{N}$ be the Lie algebra described in Theorem~\ref{thm2.13}, where $\mathcal{V}_1$ and $\mathcal{W}_1$ are copies of the natural $\mathfrak{sl}_m$–module. Then
$\dim\!\left(\operatorname{Der}(\mathcal{L}) \big/ \operatorname{Inn}(\mathcal{L})\right)=2.$
\end{proposition}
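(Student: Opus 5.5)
We follow the scheme of Proposition~\ref{prop2.11}. Let $D\in\Der(\mathcal{L})$. The nilradical $\mathcal{N}$ is a characteristic ideal. By Whitehead's First Lemma applied to $D|_{\mathfrak{sl}_m}\colon\mathfrak{sl}_m\to\mathcal{L}$, we may subtract an inner derivation and assume $D(\mathfrak{sl}_m)=0$. The Leibniz rule applied to $[s,x]$ with $s\in\mathfrak{sl}_m$, $x\in\mathcal{N}$ then shows that $D|_{\mathcal{N}}$ commutes with $\operatorname{ad}(\mathfrak{sl}_m)|_{\mathcal{N}}$. Such $D$ is determined by its restriction to the generating set $\mathcal{V}_1\oplus\mathcal{W}_1$, because $\mathcal{N}$ is quasi-cyclic. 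So
\[
\Der(\mathcal{L})/\Inn(\mathcal{L})\cong \{d\in\Der(\mathcal{N}) : [d,\operatorname{ad}(\mathfrak{sl}_m)]=0\}\big/\big(\text{those of the form }\operatorname{ad}_x|_{\mathcal N},\ x\in C_{\mathcal{L}}(\mathfrak{sl}_m)\big).
\]
For $m\ge3$, $\mathcal{N}$ contains no trivial $\mathfrak{sl}_m$-submodule: every component is $\operatorname{Sym}^k(\mathcal V)$ with $k\ge1$, and there is no analogue of $c$, compare Claim~\ref{cla2.12}. Hence $C_{\mathcal{L}}(\mathfrak{sl}_m)=0$, and it remains to show that the space of $\mathfrak{sl}_m$-equivariant derivations of $\mathcal{N}$ is exactly $2$-dimensional.

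\textbf{Upper bound.} By Schur's lemma, $\mathcal{V}_1\cong\mathcal{W}_1\cong\mathcal V$, so
\[
D(\rho_{V,1}(u))=\alpha\rho_{V,1}(u)+\gamma\rho_{W,1}(u),\qquad D(\rho_{W,1}(u))=\delta\rho_{V,1}(u)+\beta\rho_{W,1}(u).
\]
The key point is that \eqref{eq11} defines no bracket $[\mathcal{V}_1,\mathcal{V}_1]$ or $[\mathcal{W}_1,\mathcal{W}_1]$; only $[\mathcal V_1,\mathcal W_1]=\mathcal U^2$ is nonzero. We apply $D$ to these vanishing brackets. Consider $0=D[\rho_{V,1}(u),\rho_{V,1}(v)]$. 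It equals $\gamma\big([\rho_{W,1}(u),\rho_{V,1}(v)]+[\rho_{V,1}(u),\rho_{W,1}(v)]\big)=\gamma(-uv+uv)$ in $\mathcal U^2$, which gives no condition. So we must instead use the asymmetric higher brackets. $\mathcal{V}_1$ acts on every $\mathcal{V}_k$ with $k\le n-1$. By contrast, $\mathcal{W}_1$ acts on $\mathcal V_k$ only at $k=n-1$, and on $\mathcal{W}_k$ only for $k\le n-2$; this asymmetry is visible in the diagram. Take $u,v$ with $[\rho_{W,1}(u),\rho_{V,3}(P)]=0$ while $[\rho_{V,1}(u),\rho_{V,3}(P)]\neq0$. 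Expanding the Leibniz rule on $[\rho_{W,1}(u),\rho_{V,3}(P)]=0$ for small $k$ forces $\delta=0$. Similarly, the relation $[\rho_{V,1}(u),\rho_{W,n-1}(P)]=0$ compared with $[\rho_{W,1}(u),\rho_{W,n-2}(\cdot)]\ne0$ forces $\gamma=0$. Thus $D$ acts on the generators by $\operatorname{diag}(\alpha,\beta)$, and the space has dimension at most $2$. This differs from Proposition~\ref{prop2.5}, where $\gamma=\delta=0$ and $\alpha=\beta$ came from the brackets $[x_1,x_2]=[y_1,y_2]=c$; those brackets are absent here, which is why two parameters survive.

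\textbf{Lower bound.} We exhibit the two derivations by a grading. Assign $\deg\mathcal V_1=(1,0)$ and $\deg\mathcal W_1=(0,1)$, and extend along the brackets:
\[
\mathcal U^2\mapsto(1,1),\quad \mathcal V_k\mapsto(k-1,1),\quad \mathcal W_k\mapsto(1,k-1)\ \ (3\le k\le n-1),\quad \mathcal W_n\mapsto(n-1,2).
\]
The last value is forced by $[\mathcal W_1,\mathcal V_{n-1}]\subseteq\mathcal W_n$. We must check consistency with $[\mathcal W_1,\mathcal W_{n-1}]$, which is zero by \eqref{eq11}, and with \eqref{eq12}: $\mathcal V_p\times\mathcal V_{n-p}\to\mathcal W_n$ has degree $(p-1,1)+(n-p-1,1)=(n-2,2)$. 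This clashes with $(n-1,2)$. So either the grading on $\mathcal V_2=\mathcal U^2$ in \eqref{eq12} must be read appropriately, or one parameter relation appears. This consistency check is the main obstacle. We will first carefully recompute the degrees, including the identification $\mathcal V_2=\mathcal U^2$ and the exact range of $k$ in each line of \eqref{eq11}. If both coordinates give $\mathbb{Z}$-gradings of $\mathcal N$, the two degree derivations are $\mathfrak{sl}_m$-equivariant and linearly independent. They are outer, by Engel's theorem as in Proposition~\ref{prop2.11}, since they are not nilpotent. Combined with the upper bound, this gives dimension $2$.
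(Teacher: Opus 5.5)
Your overall route is the paper's. You reduce via Whitehead's lemma to $\mathfrak{sl}_m$-equivariant derivations of $\mathcal N$, kill the off-diagonal Schur coefficients, and count free parameters. Your remark that $C_{\mathcal L}(\mathfrak{sl}_m)=0$ makes this identification exact, a point the paper leaves implicit.

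The genuine gap is the lower bound. You never establish that the second parameter survives, and you assert an inconsistency which, if real, would cut the dimension to $1$. That inconsistency is an arithmetic slip: $\deg\mathcal W_n=\deg\mathcal W_1+\deg\mathcal V_{n-1}=(0,1)+(n-2,1)=(n-2,2)$, not $(n-1,2)$. With this value every nonzero bracket is homogeneous. This includes \eqref{eq12}, since $(p-1,1)+(n-p-1,1)=(n-2,2)$ for all $2\le p\le\frac{n-1}{2}$, with $\mathcal V_2=\mathcal U^2$ in degree $(1,1)$. Hence both coordinate degree maps are derivations. They act by scalars on each submodule $\mathcal V_k$, $\mathcal W_k$, $\mathcal U^2$, so they are equivariant, and they are independent on $\mathcal V_1\oplus\mathcal W_1$. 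This is exactly what the paper's recurrences encode: $a_k=(k-1)a_1+d_1$, $d_k=a_1+(k-1)d_1$ and $d_n=(n-2)a_1+2d_1$ are the two coordinates of your grading with the corrected $\deg\mathcal W_n$. You need to carry out this check explicitly rather than leave the conclusion conditional.

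There are also three smaller repairs.

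First, for $\gamma=0$ the relation $[\mathcal V_1,\mathcal W_{n-1}]=0$ gives nothing, because $[\mathcal W_1,\mathcal W_{n-1}]=0$ as well. Use instead $[\rho_{V,1}(u),\rho_{W,k}(P)]=0$ with $3\le k\le n-2$. Its Leibniz expansion is $\gamma\,\rho_{W,k+1}(uP)$ plus a term lying in $\mathcal V_{k+1}$, which forces $\gamma=0$. Your argument for $\delta$ via $[\rho_{W,1}(u),\rho_{V,3}(P)]=0$ works the same way and needs $n\ge 5$.

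Second, ``not nilpotent, hence outer by Engel'' does not show outerness in $\mathcal L$: $\operatorname{ad}_h$ for $h$ in a Cartan subalgebra of $\mathfrak{sl}_m$ is inner and not nilpotent. Rely instead on your centralizer argument. If an equivariant derivation vanishing on $\mathfrak{sl}_m$ equals $\operatorname{ad}_x$, then $x\in C_{\mathcal L}(\mathfrak{sl}_m)=0$.

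Third, that argument is not limited to $m\ge 3$. In this construction $\mathcal N$ has no trivial summand for any $m\ge 2$.
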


\begin{proof} Arguing as in the proof of Proposition \ref{prop2.11}, any derivation 
\(D \in \operatorname{Der}(\mathcal{L})\) admits a decomposition $D = D_0 + \operatorname{ad}_x,$ where \(D_0\) vanishes on \(\mathfrak{sl}_m\) and commutes with the $\mathfrak{sl}_m$–action. In particular,
\(D_0|_{\mathcal{U}^2} = c_2\,\mathrm{id}\), and on each pair 
\((\mathcal{V}_k, \mathcal{W}_k)\) we may write
\[
D_0(v) = a_k v + c_k w, 
\qquad
D_0(w) = b_k v + d_k w,
\]
where \(v \in \mathcal{V}_k\) and  
\(w \in \mathcal{W}_k\).

The relations
\begin{equation}\label{eq13}
[\mathcal{W}_1,\mathcal{V}_{n-1}] = \mathcal{W}_n,\qquad
[\mathcal{W}_1,\mathcal{U}^2]=\mathcal{W}_3,\qquad
[\mathcal{W}_1,\mathcal{W}_k]=\mathcal{W}_{k+1},\quad 3 \le k \le n-2,
\end{equation}
imply that \(b_k = 0\) for all \(k\).  
Replacing \(\mathcal{W}_i\) with \(\mathcal{V}_i\) in \eqref{eq13}, and using additionally the relation 
\([\mathcal{V}_1,\mathcal{W}_{n-1}] = 0\), an identical argument shows that \(c_k = 0\) for all \(k\).

Next, the brackets
\[
[\mathcal{V}_1,\mathcal{W}_1]=\mathcal{U}^2,\quad
[\mathcal{V}_1,\mathcal{U}^2]=\mathcal{V}_3,\quad
[\mathcal{V}_1,\mathcal{V}_k]=\mathcal{V}_{k+1},\quad
[\mathcal{W}_1,\mathcal{W}_k]=\mathcal{W}_{k+1},\quad 3\le k\le n-2,
\]
yield the recurrence relations
\[
a_{k+1} = a_1 + a_k, \qquad
d_{k+1} = d_1 + d_k.
\]
Solving these recurrences gives
\[
a_k = (k-1)a_1 + d_1,\qquad 3\le k\le n, \quad 
d_k = a_1 + (k-1)d_1,\qquad 3\le k\le n-1.
\]

Finally, the compatibility condition for the bracket 
\([\mathcal{W}_1,\mathcal{V}_{\,n-1}] = \mathcal{W}_n\) forces
$d_n = (n-2)a_1 + 2d_1.$ Thus the entire family of coefficients \(\{a_k,d_k\}\) is determined linearly by the two free parameters \(a_1\) and \(d_1\). The proof is complete. \end{proof}

\section{Second Cohomology of Generalized Sato-Type Lie Algebras}\label{sec3}

In this section, we use the Hochschild–Serre factorization theorem together with the decomposition theory of $\mathfrak{sl}_2$-modules to study the second cohomology group with coefficients in the adjoint module for generalized Sato-type Lie algebras.

We briefly recall several notions from cohomology theory of Lie algebras, for the details we refer the reader to \cite{CE-1978,jaco62}. Let $\mathcal{L}$ be a Lie algebra and let $\mathcal{M}$ be an $\mathcal{L}$-module. For each $n \geq 0$, the space of $n$-cochains is defined as $C^n(\mathcal{L}, \mathcal{M}) = \operatorname{Hom}_{\mathbb{F}}(\wedge^n \mathcal{L}, \mathcal{M})$. For $n=0$, we set $C^0(\mathcal{L}, \mathcal{M}) = \mathcal{M}$.
The Chevalley–Eilenberg coboundary operator $d_n\colon  C^n(\mathcal{L}, \mathcal{M}) \to C^{n+1}(\mathcal{L}, \mathcal{M})$ is defined by 
\begin{align*}
(d_n f)(x_1, \dots, x_{n+1}) = & \sum_{i=1}^{n+1} (-1)^{i+1} x_i \cdot f(x_1, \dots, \hat{x}_i, \dots, x_{n+1}) \\
& + \sum_{1 \le i < j \le n+1} (-1)^{i+j} f([x_i, x_j], x_1, \dots, \hat{x}_i, \dots, \hat{x}_j, \dots, x_{n+1}),
\end{align*}
where the notation $\hat{x}_i$ indicates that the element $x_i$ is omitted. 

The space of $n$-cocycles is $ Z^n(\mathcal{L}, \mathcal{M}) = \ker d_n$, and the space of $n$-coboundaries, $ B^n(\mathcal{L}, \mathcal{M}) = \operatorname{im} d_{n-1}$.
The $n$-th cohomology group of $\mathcal{L}$ with coefficients in $\mathcal{M}$ is the quotient vector space:
$$H^n(\mathcal{L}, \mathcal{M}) = Z^n(\mathcal{L}, \mathcal{M}) / B^n(\mathcal{L}, \mathcal{M}).$$

Although computing cohomology groups is a difficult problem, the Hochschild–Serre factorization theorem offers a significant simplification for certain classes of Lie algebras (see Theorem~13 in \cite{HS-1953}). We now present a version of this theorem tailored to our needs.

\begin{theorem}\label{thm3.1}  Let $\mathcal{L}=\mathcal S\ltimes \mathcal N$ be a finite-dimensional Lie algebra over a field $\mathbb{F}$ of characteristic $0$, and let $\mathcal{M}$ be a finite-dimensional $\mathcal{L}$-module. Suppose that $\mathcal{N}$ is an ideal of $\mathcal{L}$ such that the quotient $\mathcal{L}/\mathcal{N}$ is semi-simple. Then for all $n \geq 0$,
\begin{equation}\label{eq14}
H^{n}(\mathcal{L}, \mathcal{M}) 
\cong \sum_{i+j=n} H^{i}(\mathcal S, \mathbb{F}) \otimes
 H^{j}(\mathcal{N}, \mathcal{M})^{\mathcal S}.
\end{equation}
where 
\[
H^j(\mathcal N, \mathcal{M})^{\mathcal S} = \bigl\{ f \in H^j(\mathcal N, \mathcal{M}) \mid (s \cdot f) = 0,\ s \in \mathcal S \bigr\}
\]
is the space of $\mathcal S$-invariant cocycles of $\mathcal N$ with values in $\mathcal{M}$. The invariance being defined by
\[
(s \cdot f)(z_1, z_2, \dots, z_j) = s \cdot f(z_1, z_2, \dots, z_j) - \sum_{t=1}^j f(z_1, \dots, [s, z_t], \dots, z_j).
\]
\end{theorem}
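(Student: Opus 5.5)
The plan is to deduce the statement from the Hochschild--Serre spectral sequence of the ideal $\mathcal N\subset\mathcal L$,
\[
E_2^{i,j}=H^i\bigl(\mathcal L/\mathcal N,\,H^j(\mathcal N,\mathcal M)\bigr)\Longrightarrow H^{i+j}(\mathcal L,\mathcal M).
\]
Here $\mathcal L/\mathcal N\cong\mathcal S$ acts on $H^j(\mathcal N,\mathcal M)$ by the formula in the statement. Two facts then give \eqref{eq14}: the $E_2$-page has the product form displayed there, and the spectral sequence degenerates at $E_2$. Since we work over a field, the filtration on $H^n(\mathcal L,\mathcal M)$ splits as vector spaces, so only these two points need proof.

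\emph{Step 1: the $E_2$-page.} Since $\mathcal S$ is semisimple and $H^j(\mathcal N,\mathcal M)$ is finite-dimensional, Weyl's theorem decomposes it as $H^j(\mathcal N,\mathcal M)^{\mathcal S}\oplus V$, where $V$ is a sum of non-trivial irreducible $\mathcal S$-modules. By Whitehead's theorem, $H^\ast(\mathcal S,V)=0$ for every non-trivial irreducible $V$; the Casimir element acts invertibly on $V$ and homotopically trivially on the cochain complex. On the trivial summand, $H^i(\mathcal S,W)=H^i(\mathcal S,\mathbb F)\otimes W$. Hence
\[
E_2^{i,j}\cong H^i(\mathcal S,\mathbb F)\otimes H^j(\mathcal N,\mathcal M)^{\mathcal S}.
\]

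\emph{Step 2: reduction to invariant cochains.} The algebra $\mathcal L$, and in particular $\mathcal S$, acts on $C^\ast(\mathcal L,\mathcal M)$ and on $C^\ast(\mathcal N,\mathcal M)$ commuting with $d$. Cartan's formula $\theta_s=d\,\iota_s+\iota_s\,d$ shows that $\mathcal S$ acts trivially on cohomology. Because the action on cochains is completely reducible, taking $\mathcal S$-invariants is exact and commutes with $d$. Therefore
\[
H^\ast(\mathcal L,\mathcal M)=H^\ast\bigl(C(\mathcal L,\mathcal M)^{\mathcal S}\bigr),\qquad H^j(\mathcal N,\mathcal M)^{\mathcal S}=H^j\bigl(C(\mathcal N,\mathcal M)^{\mathcal S}\bigr),
\]
and every class in $H^j(\mathcal N,\mathcal M)^{\mathcal S}$ has an $\mathcal S$-invariant representing cocycle $f$. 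Similarly, $H^i(\mathcal S,\mathbb F)\cong(\wedge^i\mathcal S^\ast)^{\mathcal S}$, the invariant forms, all of which are closed.

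\emph{Step 3: explicit cocycles and degeneration.} Use the vector space splitting $\mathcal L=\mathcal S\oplus\mathcal N$ with projections $\pi_{\mathcal S}$ and $\pi_{\mathcal N}$. For an invariant form $\omega$ and an invariant cocycle $f$, set
\[
\Phi(\omega\otimes f)=\pi_{\mathcal S}^\ast\omega\wedge\pi_{\mathcal N}^\ast f\in C^{i+j}(\mathcal L,\mathcal M).
\]
I expect the main obstacle to be verifying $d\,\Phi(\omega\otimes f)=0$. Expanding the coboundary, the terms involving brackets inside $\mathcal N$ vanish because $f$ is a cocycle of $\mathcal N$. The terms involving $[\mathcal S,\mathcal S]$ vanish because $\omega$ is closed. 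The mixed terms combine into $(s\cdot f)$ and into $\mathcal S$-derivatives of $\omega$, which vanish by invariance. The verification also uses that $\mathcal N$ is an ideal, so that no $\mathcal S$-component arises from $[\mathcal S,\mathcal N]$.

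Once this is checked, $\Phi(\omega\otimes f)$ lies in filtration degree $i$ and maps onto the class $\omega\otimes[f]$ in $E_2^{i,j}$. So every $E_2$-class is a permanent cycle, all differentials $d_r$ with $r\ge 2$ vanish, and $E_\infty=E_2$. The induced map $\bigoplus_{i+j=n}E_2^{i,j}\to H^n(\mathcal L,\mathcal M)$ is then an isomorphism: it is compatible with the filtration and induces the identity on associated graded pieces. This yields \eqref{eq14}.
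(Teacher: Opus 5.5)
The paper gives no proof of this statement. It quotes it as Theorem~13 of Hochschild--Serre, so there is no internal argument to compare against. Your proposal is the standard proof of that theorem, and it is correct. Whitehead's vanishing for non-trivial irreducibles identifies $E_2^{i,j}$ with $H^i(\mathcal S,\mathbb F)\otimes H^j(\mathcal N,\mathcal M)^{\mathcal S}$. Exactness of $\mathcal S$-invariants gives invariant representatives. The explicit cocycles $\pi_{\mathcal S}^\ast\omega\wedge\pi_{\mathcal N}^\ast f\in F^iC^{i+j}$ show that every $E_2$-class is a permanent cycle, which yields both $E_\infty=E_2$ and a splitting of the filtration on $H^n(\mathcal L,\mathcal M)$.

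Two points should be written out rather than sketched.

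\emph{The cocycle check.} It is cleanest via the Leibniz rule $d(\alpha\wedge\beta)=d\alpha\wedge\beta+(-1)^{|\alpha|}\alpha\wedge d\beta$ for scalar cochains acting on $\mathcal M$-valued ones.
\begin{itemize}
\item Since $\pi_{\mathcal S}$ is the quotient homomorphism $\mathcal L\to\mathcal L/\mathcal N\cong\mathcal S$, you get $d\pi_{\mathcal S}^\ast\omega=\pi_{\mathcal S}^\ast d\omega=0$.
\item The form $d\pi_{\mathcal N}^\ast f$ vanishes on all-$\mathcal N$ arguments because $df=0$. With exactly one argument $s\in\mathcal S$ it equals $(s\cdot f)=0$. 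With two or more arguments in $\mathcal S$ it vanishes trivially, since $[\mathcal S,\mathcal S]\subseteq\mathcal S$ is killed by $\pi_{\mathcal N}$.
\end{itemize}
In particular no ``$\mathcal S$-derivatives of $\omega$'' occur, and invariance of $\omega$ is not needed at this step; only $d\omega=0$ is.

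\emph{Well-definedness of $\Phi$ on classes.} Your map was defined on representatives. If an invariant cocycle $f$ equals $dg$, you may take $g$ invariant by projecting to invariants. The same computation then gives $\pi_{\mathcal N}^\ast dg=d\,\pi_{\mathcal N}^\ast g$, so $\Phi(\omega\otimes f)=(-1)^i d\bigl(\pi_{\mathcal S}^\ast\omega\wedge\pi_{\mathcal N}^\ast g\bigr)$ is a coboundary.

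With these two points in place, downward induction on the finite filtration completes the proof of \eqref{eq14}.
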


\begin{remark} \label{rem3.2}
Taking into account that \(H^0(\mathcal{S}, \mathbb{F}) = \mathbb{F}\) and invoking Whitehead’s second lemma, which yields \(H^i(\mathcal{S}, \mathbb{F}) = 0\) for all \(i \ge 1\), it follows from \eqref{eq14} that
\[
H^{n}(\mathcal{L}, \mathcal{M}) 
\cong H^{n}(\mathcal{N}, \mathcal{M})^{\mathcal{S}}.
\]
Moreover, one has
\[
H^2(\mathcal{N}, \mathcal{L})^{\mathcal{S}} 
= \frac{Z^2(\mathcal{N}, \mathcal{L})^{\mathcal{S}}}{B^2(\mathcal{N}, \mathcal{L})^{\mathcal{S}}}.
\]
\end{remark}

For irreducible $\mathfrak{sl}_2$-modules of the highest weights $n$ (denoted by $V_n$) we recall the Clebsch--Gordan formula and one of its consequences (see \cite{Hum-78, Pul25}):
\begin{equation}\label{eq15}
V_n \otimes V_m \cong \bigoplus_{k=0}^{m} V_{n+m-2k}, \quad n \ge m, 
\qquad\qquad 
\wedge^2 V_n \cong \bigoplus_{k=1}^{\left\lfloor \frac{n+1}{2} \right\rfloor} V_{2n+2-4k}.
\end{equation}

Consider the Lie algebra \(\mathcal{L} = \mathfrak{sl}_2 \ltimes \mathcal{N}\), where \(\mathcal{N} = \mathcal{GN}(1,1)\). One verifies that
\begin{equation*}
\mathcal{N} \cong V_0 \oplus V_1 \oplus V_1' \oplus V_2 \oplus V_3 \oplus V_3' \oplus V_4 \oplus V_4' \oplus V_5 \oplus V_5',
\end{equation*}
where \(V_k'\) denotes an additional copy of \(V_k\). Accordingly, we obtain the following \(\mathcal{L}\)-module decomposition:
$$\mathcal{L} \cong V_2' \oplus \left( V_0 \oplus V_1 \oplus V_1' \oplus V_2 \oplus V_3 \oplus V_3' \oplus V_4 \oplus V_4' \oplus V_5 \oplus V_5' \right).$$

The computing the dimension of $\operatorname{Hom}_{\mathfrak{sl}_2}(\mathcal{N}, \mathcal{L})$ involve counting pairs of isomorphic irreducible components between $\mathcal{N}$ and $\mathcal{L}$. Then applying a generalized form of Schur's Lemma adapted for completely reducible modules (see \cite[p.57, Corollary 3]{Zhelobenko-1973}) we obtain 
$$\dim C^1(\mathcal{N}, \mathcal{L})^{\mathfrak{sl}_2} = \operatorname{Hom}_{\mathfrak{sl}_2}(\mathcal{N}, \mathcal{L})=19.$$ 

To determine $\dim C^2(\mathcal{N}, \mathcal{L})^{\mathfrak{sl}_2}$, we exploit the canonical isomorphism
\[
C^2(\mathcal{N}, \mathcal{L})^{\mathfrak{sl}_2} \cong \operatorname{Hom}_{\mathfrak{sl}_2}(\Lambda^2 \mathcal{N}, \mathcal{L}),
\]
and consider the decomposition
\[
\Lambda^2 \mathcal{N} \cong \bigoplus_{i=1}^{10} \Lambda^2 W_i \;\bigoplus\; \bigoplus_{1 \le i < j \le 10} W_i \bigotimes W_j,
\]
where $\{W_1, \dots, W_{10}\} = \{V_0, V_1, V_1^{'}, \dots, V_5, V_5^{'}\}$ denotes the irreducible summands of $\mathcal{N}$. By \eqref{eq15}, $\Lambda^2 \mathcal{N}$ decomposes as a direct sum of irreducible $\mathfrak{sl}_2$-modules. Applying Schur’s lemma we obtain 
$$\dim C^2(\mathcal{N}, \mathcal{L})^{\mathfrak{sl}_2} = \sum_k m_k(\Lambda^2 \mathcal{N}) \, m_k(\mathcal{L}),$$
where $m_k(\Lambda^2 \mathcal{N})$ and $m_k(\mathcal{L})$ denote the multiplicities of the irreducible module $V_k$ in $\Lambda^2 \mathcal{N}$ and $\mathcal{L}$, respectively. Employing a symbolic computation in \textit{Mathematica}, yielding
$$C^2(\mathcal{N}, \mathcal{L})^{\mathfrak{sl}_2}=\operatorname{Span\{\varphi_1, \dots, \varphi_{196}\}},$$ 
where basis elements $\varphi_i$ are determined by generalized Schur's Lemma. 

Taking into account the linear property of $d_2$ for an element  $\Psi = \sum\limits _{i}c_i \varphi_i$ of the space $Z^2(\mathcal{N}, \mathcal{L})^{\mathfrak{sl}_2}$ we have   
$$\sum\limits _{i}c_i \big(d_2\varphi_i\big)(x, y, z)=0.$$ 
Evaluating this identity on basis elements $x, y, z\in \mathcal{N}$ leads to a system of homogeneous linear equations in the coefficients $c_i$. Using the \textit{Mathematica} package, one obtains the corresponding matrix of this system. The computation shows that the solution space has dimension 21. Consequently,
$$\dim{Z}^2(\mathcal N,\mathcal L)^{\mathfrak{sl}_2} = 21.$$

Next, combining Proposition \ref{prop2.5} and Remark \ref{rem3.2} with $\mathcal{M}=\mathcal{L}$ we derive $H^1(\mathcal{N}, \mathcal{L})^{\mathfrak{sl}_2} = 0$. Hence, 
$$Z^1(\mathcal{N},\mathcal{L})^{\mathfrak{sl}_2}=B^1(\mathcal{N},\mathcal{L})^{\mathfrak{sl}_2}.$$

Let $\operatorname{ad}_x$ be an element of $B^1(\mathcal{N},\mathcal{L})^{\mathfrak{sl}_2}$ for some $x\in \mathcal{L}.$ Then the equality $[y,\operatorname{ad}_x(z)]=\operatorname{ad}_x([y,z])$ for all $y \in \mathfrak{sl}_2, z \in \mathcal{N}$,
implies that $\operatorname{ad}_x(\mathfrak{sl}_2)\subseteq \operatorname{Center}(\mathcal{N}).$
By virtue of structure of $\mathcal{N}$ it  follows that $x\in \operatorname{Center}(\mathcal{N})$, i.e., $B^1(\mathcal{N},\mathcal{L})^{\mathfrak{sl}_2}=0$. Therefore, we obtain $Z^1(\mathcal{N},\mathcal{L})^{\mathfrak{sl}_2}=0.$ In view of 
$$\dim B^2(\mathcal{N}, \mathcal{L})^{\mathfrak{sl}_2} = \dim C^1(\mathcal{N}, \mathcal{L})^{\mathfrak{sl}_2} $$
we derive 
$\dim B^2(\mathcal{N}, \mathcal{L})^{\mathfrak{sl}_2} =19.$

Applying now Remark~\ref{rem3.2} we obtain the following proposition.
\begin{proposition}\label{prop3.3}
    Let $\mathcal{L} = \mathfrak{sl}_2 \ltimes \mathcal{GN}(1,1)$. Then $H^2(\mathcal{L}, \mathcal{L}) = 2$.
\end{proposition}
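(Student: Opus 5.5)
The proof assembles the dimension counts just obtained, via the Hochschild--Serre reduction in Theorem~\ref{thm3.1} and Remark~\ref{rem3.2}. Since $\mathcal{L}=\mathfrak{sl}_2\ltimes\mathcal{N}$ with $\mathcal{N}=\mathcal{GN}(1,1)$ and $\mathfrak{sl}_2$ semisimple, Whitehead's second lemma kills every term of \eqref{eq14} with $i\ge 1$. This leaves
\[
H^2(\mathcal{L},\mathcal{L})\cong H^2(\mathcal{N},\mathcal{L})^{\mathfrak{sl}_2}=Z^2(\mathcal{N},\mathcal{L})^{\mathfrak{sl}_2}/B^2(\mathcal{N},\mathcal{L})^{\mathfrak{sl}_2}.
\]
Here the invariant part of cohomology is computed at the cochain level, which is legitimate because $\mathfrak{sl}_2$ acts semisimply on the cochain complex and the coboundary $d$ commutes with this action. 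Hence the answer will be $\dim H^2(\mathcal{L},\mathcal{L})=\dim Z^2(\mathcal{N},\mathcal{L})^{\mathfrak{sl}_2}-\dim B^2(\mathcal{N},\mathcal{L})^{\mathfrak{sl}_2}$.

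\textbf{Step 1 (cocycles).} Use the $\mathfrak{sl}_2$-module decomposition of $\mathcal{N}$ and $\mathcal{L}$ recorded above, the Clebsch--Gordan formulas \eqref{eq15}, and Schur's lemma to produce the $196$-dimensional space $C^2(\mathcal{N},\mathcal{L})^{\mathfrak{sl}_2}\cong\operatorname{Hom}_{\mathfrak{sl}_2}(\Lambda^2\mathcal{N},\mathcal{L})$ with explicit basis $\varphi_1,\dots,\varphi_{196}$. Then solve the homogeneous linear system $d_2\big(\sum c_i\varphi_i\big)=0$ on triples of basis vectors of $\mathcal{N}$, which gives $\dim Z^2=21$. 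By $\mathfrak{sl}_2$-equivariance it suffices to evaluate on triples where at least one entry is a highest weight vector, which keeps the system manageable.

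\textbf{Step 2 (coboundaries).} Take $d_1\colon C^1(\mathcal{N},\mathcal{L})^{\mathfrak{sl}_2}\to B^2(\mathcal{N},\mathcal{L})^{\mathfrak{sl}_2}$, which is surjective. Its kernel is $Z^1(\mathcal{N},\mathcal{L})^{\mathfrak{sl}_2}$. Proposition~\ref{prop2.5} together with Remark~\ref{rem3.2} gives $H^1(\mathcal{N},\mathcal{L})^{\mathfrak{sl}_2}=0$, so this kernel equals $B^1(\mathcal{N},\mathcal{L})^{\mathfrak{sl}_2}$, consisting of maps $\operatorname{ad}_x|_{\mathcal{N}}$ with $x$ an $\mathfrak{sl}_2$-invariant element of $\mathcal{L}$. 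The only trivial summand is $V_0=\langle c\rangle$, which is central, so these maps vanish. Thus $d_1$ is injective and $\dim B^2=\dim C^1(\mathcal{N},\mathcal{L})^{\mathfrak{sl}_2}=19$, the latter from counting matching isotypic components.

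\textbf{Step 3.} Subtracting gives $21-19=2$.

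The main obstacle is Step 1: the rank computation of a large linear system, which is done by computer. To make it checkable, I would also exhibit two explicit invariant cocycles representing independent classes, for instance deformations rescaling the parameters $(a,b)$ in \eqref{eq5}, and verify by hand that they are not coboundaries.
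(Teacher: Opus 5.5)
Your proposal is correct and follows the paper's own argument: the Hochschild--Serre reduction to $\mathfrak{sl}_2$-invariant cochains, the computer-derived $\dim Z^2(\mathcal{N},\mathcal{L})^{\mathfrak{sl}_2}=21$ inside the $196$-dimensional invariant $2$-cochain space, and $\dim B^2=\dim C^1(\mathcal{N},\mathcal{L})^{\mathfrak{sl}_2}=19$ via injectivity of $d_1$. Your reason for that injectivity, namely that $\mathcal{L}^{\mathfrak{sl}_2}=\langle c\rangle$ and $c$ is central, is cleaner than the paper's.

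One caution about your optional sanity check. Varying $(a,b)$ along the ray through $(1,1)$ is a trivial deformation, because multiplying $\mathcal{U}^k$ by $t^k$ is an $\mathfrak{sl}_2$-equivariant isomorphism $\mathcal{GN}(a,b)\cong\mathcal{GN}(t^2a,t^2b)$. So at most one independent class can come from the parameters. A second candidate is the cocycle $\Psi(x_1,y_2)=c$, $\Psi(x_2,y_1)=-c$ from the proof of Theorem~\ref{thm3.4}, which is non-trivial already for a single copy $\mathcal{GN}(1,1)$.
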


Regarding the family of algebras $\mathfrak{sl}_2\mathcal{G}(a_i,b_i)$ we obtain the following result.
\begin{theorem}\label{thm3.4}
Let \(\mathcal{L}\) be an algebra in the family \(\mathfrak{sl}_2\mathcal{G}(a_i,b_i)\). Then
$\dim H^2(\mathcal{L}, \mathcal{L}) \geq 1.$
\end{theorem}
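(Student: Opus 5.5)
By Remark~\ref{rem3.2}, $H^2(\mathcal{L},\mathcal{L})\cong H^2(\mathcal{N},\mathcal{L})^{\mathfrak{sl}_2}$, where $\mathcal{N}$ is the iterated product of the $\mathcal{GN}(a_i,b_i)$. It therefore suffices to exhibit one $\mathfrak{sl}_2$-invariant $2$-cocycle $\psi\in Z^2(\mathcal{N},\mathcal{L})^{\mathfrak{sl}_2}$ that is not of the form $d_1\varphi$ with $\varphi\in C^1(\mathcal{N},\mathcal{L})^{\mathfrak{sl}_2}$. This suffices because the complex of invariant cochains computes the invariant cohomology, by complete reducibility.

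The candidate is an infinitesimal deformation of the parameters. The structure constants of each block $\mathcal{GN}(a_i,b_i)$ depend linearly on $(a_i,b_i)$ through \eqref{eq5}--\eqref{eq6}. Fix the first block, $i=1$, and set
\[
\psi(u,v)=\frac{\partial}{\partial a_1}[u,v]_{a_1,b_1},
\]
that is, $\psi$ is the bilinear map whose only nonzero values are $[x_j,y_i^n]\mapsto u(i,j)$ and $[x_j^p,x_i^{n-p+1}]\mapsto \tfrac{p-1}{p}v(p,i,j)$ on the first block. It vanishes on all other pairs, including all brackets involving the maps $\phi$ of $\Phi$. Since Proposition~\ref{prop2.3} says the $a_1$-family is a Lie algebra for every value of $a_1$, the linear-in-$a_1$ part of the Jacobi identity is exactly $d_2\psi=0$; the quadratic term vanishes because $\psi(\mathcal{N},\mathcal{N})\subseteq\mathcal{U}^{n-1}$ and $\psi$ is zero on $\mathcal{U}^{n-1}\times\mathcal{N}$ by degree. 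One also has to check that the actions $\phi_{m,t}$ defined by \eqref{eq9} remain homomorphisms for all $a_1$. I expect this holds because $\phi$ is given by $\operatorname{ad}$ of the corresponding generators, which transports the deformed structure constants. The $\mathfrak{sl}_2$-invariance of $\psi$ holds because \eqref{eq4} does not depend on $a_1$.

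Nontriviality is the main step. Suppose $\psi=d_1\varphi$ with $\varphi$ an $\mathfrak{sl}_2$-equivariant map $\mathcal{N}\to\mathcal{L}$. By Schur's lemma, $\varphi$ acts on the generating modules $V_2$ of each block by a matrix of scalars, and on higher components it is determined by the multiplicity structure. For any $u,v$ with $[u,v]=0$ and $\psi(u,v)=0$ we get $[\varphi u,v]+[u,\varphi v]=0$. Applied to the brackets \eqref{eq1} that are independent of $a_1$, this yields equations of exactly the type used in the proofs of Propositions~\ref{prop2.5}--\ref{prop2.8} and Theorem~\ref{thm2.9}. Those equations force $\varphi$ on the first block to be the grading derivation $\lambda\cdot\mathrm{deg}$ up to inner terms; inner terms can be dropped since they are cocycles. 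Evaluating on $\psi(x_1,y_2^n)=y_1^{n-1}$ then gives $[\varphi x_1,y_2^n]+[x_1,\varphi y_2^n]-\varphi([x_1,y_2^n])=y_1^{n-1}$. With the scaling $\varphi=\lambda\deg$, the left side is $\lambda(1+n-(n-1))a_1y_1^{n-1}=2\lambda a_1y_1^{n-1}$. This is the same mismatch between $(n-1)$ and $(n+1)$ that forced $\alpha=0$ in Proposition~\ref{prop2.5}.

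The obstacle is that $\lambda$ is not yet pinned down at this point. A second equation comes from the $b_1$-type bracket $[y_1,x_2^n]=-b_1y_1^{n-1}$, on which $\psi$ vanishes, giving $2\lambda b_1=0$. If $b_1\neq0$ this forces $\lambda=0$, and then the equation $0=y_1^{n-1}$ is a contradiction, so $[\psi]\neq0$. If $b_1=0$, I would instead use $\partial/\partial b_1$, or a suitable combination $\alpha\partial_{a_1}+\beta\partial_{b_1}$ that is not proportional to $(a_1,b_1)$. Rescaling the direction $(a_1,b_1)$ is expected to be trivial, but any transverse direction should give a nonzero class. Verifying carefully that $\varphi$ really reduces to a multiple of the degree map also requires checking the cross terms between blocks given by $\Phi$, and I expect this to be the most laborious part.
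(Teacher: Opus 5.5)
Your single-block computation is sound: the degree map accounts for the rescaling direction $a\partial_a+b\partial_b$, so $\partial_a\mu$ is nontrivial when $b\neq0$. The multi-block case, which is the actual statement, has two gaps.

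First, your cocycle is misdefined in Case~1 of the family, $\mathcal{L}=\mathfrak{sl}_2\ltimes(\mathcal{N}_1\rtimes\widetilde{\mathcal{N}})$. There $\mathcal{N}_1=\mathcal{GN}(a_1,b_1)$ is the ideal, and a generator $x_1^{(s)}$ of another block acts through \eqref{eq9} as $\operatorname{ad}_{x_1^{(1)}}$. So cross brackets inherit the $a_1$-dependence, e.g.\ $[(y_2^{n})^{(1)},x_1^{(s)}]=[(y_2^{n})^{(1)},x_1^{(1)}]=-a_1(y_1^{n-1})^{(1)}$. Your justification (derivative of a family of Lie brackets) covers only the full derivative $\partial_{a_1}\mu$. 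The map you actually write down, which vanishes on all cross pairs, is not a cocycle. Its values are central in $\mathcal{N}$, so on the triple $\bigl(y_1^{(1)},(x_2^{n-1})^{(1)},x_1^{(s)}\bigr)$ only the $\psi([\cdot,\cdot],\cdot)$ terms survive, giving $-\psi\bigl((y_2^n)^{(1)},x_1^{(s)}\bigr)-\psi\bigl(z_1^{(1)},(x_2^{n-1})^{(1)}\bigr)+\psi\bigl((x_2^n)^{(1)},y_1^{(1)}\bigr)=0-(y_1^{n-1})^{(1)}+0\neq0$. This is repairable by taking the honest derivative including cross values. The homomorphism check you leave open does go through, because every $a_1$-dependent bracket lands in the span of the $y_i^{n-1}$, which is central in $\mathcal{GN}(a_1,b_1)$ and hence killed by $\operatorname{ad}$. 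Separately, your claim that $\psi$ vanishes on $\mathcal{U}^{n-1}\times\mathcal{N}$ is false, since $\psi(z_j,x_i^{n-1})\neq0$. What is true is that $\psi$ takes values in, and vanishes on, the span of the $y_i^{n-1}$; in any case the quadratic term needs no separate argument.

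Second, and more seriously, you defer nontriviality in the multi-block setting, and the decisive equations are not established. An equivariant $\varphi$ may send each $V_k$ of block $1$ into the isomorphic summands of every other block, and each copy $\langle z_i^{(j)}\rangle$ into $\mathfrak{sl}_2$. These mixing coefficients enter exactly your key evaluations. In Case~1, both $[x_1^{(1)},(y_2^{n})^{(j)}]=a_1(y_1^{n-1})^{(1)}$ and $[(y_2^n)^{(1)},x_1^{(j)}]=-a_1(y_1^{n-1})^{(1)}$ land in block $1$. The corrected $\psi$ also has nonzero cross values that $d_1\varphi$ must match. You cannot simply import Proposition~\ref{prop2.5} or Theorem~\ref{thm2.9}: their decisive step is the Leibniz rule on the $a$-dependent top brackets, which is precisely where $d_1\varphi=\psi\neq0$. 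So $2\lambda a_1=1$ and $2\lambda b_1=0$ do not yet follow, and you still need a case split on $b_1$.

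The paper avoids all of this by using a different cocycle: $\Psi(x_1^{(1)},y_2^{(1)})=c^{(1)}=-\Psi(x_2^{(1)},y_1^{(1)})$, the invariant pairing $V_1\otimes V_1'\to V_0$ on the generators of block $1$, with values in the center. Since $c^{(1)}$ is central in $\mathcal{N}$ and no generator lies in $[\mathcal{N},\mathcal{N}]$, one has $\Psi([\mathcal{N},\mathcal{N}],\mathcal{N})=0$, so the cocycle identity is immediate and independent of the $a_i,b_i$. Nontriviality then uses only low-degree, parameter-free brackets. Evaluating at $(x_1^{(1)},y_2^{(1)})$ forces the off-diagonal $x\leftrightarrow y$ entries of $\varphi$ on generators to sum to $1$. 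Evaluating at $(y_1^{(1)},z_1^{(1)})$ and $(x_1^{(1)},(y_1^{(1)})^3)$ forces those sums to vanish, a contradiction.
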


\begin{proof}
Let \(\mathcal{N}\) denote the nilradical of \(\mathcal{L}\). Define a bilinear map \(\Psi \colon \mathcal{N} \wedge \mathcal{N} \to \mathcal{L}\) by
\[
\Psi(x_1^{(1)}, y_2^{(1)}) = c^{(1)}, \qquad 
\Psi(x_2^{(1)}, y_1^{(1)}) = -c^{(1)},
\]
and \(\Psi(u,v)=0\) for all other pairs of basis elements, extending by bilinearity. One verifies that \(\Psi \in Z^2(\mathcal{N}, \mathcal{L})^{\mathfrak{sl}_2}\).

Assume that \(\Psi \in B^2(\mathcal{N}, \mathcal{L})^{\mathfrak{sl}_2}\). Then there exists \(\phi \in C^1(\mathcal{N}, \mathcal{L})^{\mathfrak{sl}_2}\) such that \(\Psi = d_1\phi\). As in the proof of Theorem~\ref{thm2.9}, the restriction \(\phi|_{\mathcal{U}}\) is given by a matrix \((a_{i,j})_{2k \times 2k}\). Moreover, since \(\mathcal{N}\) contains copies of the \(\mathfrak{sl}_2\)-module \(V_2=\{z_1,z_2,z_3\}\), one has
\[
\phi(z_i^{(1)}) = \sum_{j=1}^k b_j z_i^{(j)} + \mu_i, \qquad 1 \leq i \leq 3,
\]
where \(\mu_1=\alpha e\), \(\mu_2=\beta h\), \(\mu_3=\gamma f\) for some \(\alpha,\beta,\gamma \in \mathbb{C}\).

Following the notation established in the proof of Theorem \ref{thm2.9} we distinguish two cases.

\medskip
\noindent\textbf{Case 1.} Let \(\mathcal{L} = \mathfrak{sl}_2 \ltimes (\mathcal{N}_1 \rtimes \widetilde{\mathcal{N}})\). Recalling the products
$$[y_1^{(j)},y_2^{(1)}]=c^{(1)}, \quad 
[x_1^{(1)},x_2^{(j)}]=c^{(1)}, \quad 
[x_1^{(j)},y_2^{(1)}]=[x_1^{(1)},y_2^{(j)}]=z_2^{(1)},$$
we evaluate the equality 
$c^{(1)}=(d_1\phi)(x_1^{(1)},y_2^{(1)}),$
which yields relations
\begin{equation}\label{eq16}
\sum_{j=1}^k \big(a_{1,2j}+a_{2,2j-1}\big)=1, \qquad 
\mu_2=0, \qquad 
b_j=0 \ (2\leq j\leq k).
\end{equation}
Using \eqref{eq16} in the verification of the equalities 
$$\Psi(y_1^{(1)}, z_1^{(1)}) = (d_1 \phi)(y_1^{(1)}, z_1^{(1)}), \quad 
\Psi(x_1^{(1)}, (y_1^{(1)})^3) = (d_1 \phi)(x_1^{(1)}, (y_1^{(1)})^3),$$
we obtain
$$\sum_{j=1}^k a_{1,2j}=0, \qquad 
\sum_{j=1}^k a_{2,2j-1}=0,$$
which contradicts the first relation in \eqref{eq16}. Hence \(\Psi \notin B^2(\mathcal{N}, \mathcal{L})^{\mathfrak{sl}_2}\).

\medskip
\noindent\textbf{Case 2.} Let \(\mathcal{L} = \mathfrak{sl}_2 \ltimes (\mathcal{N}_1 \ltimes \widetilde{\mathcal{N}})\). From the equality 
\[
\Psi(x_1^{(1)},y_2^{(1)})=(d_1\phi)(x_1^{(1)},y_2^{(1)}),
\]
we obtain \(a_{1,2}+a_{2,1}=1\). On the other hand, evaluating \(\Psi=d_1\phi\) on pairs \((y_1^{(1)},z_1^{(1)})\) and \(\big(x_1^{(1)},(y_1^{(1)})^3\big)\) yields $a_{1,2}=a_{2,1}=0,$
a contradiction. Therefore, \(\Psi \notin B^2(\mathcal{N}, \mathcal{L})^{\mathfrak{sl}_2}\).

Thus, in both cases, \(\Psi\) defines a non-trivial class in \(H^2(\mathcal{N}, \mathcal{L})^{\mathfrak{sl}_2}\). 
\end{proof}

It is worth noting that $\Psi$ satisfies the Maurer--Cartan equation 
$$\circlearrowleft_{x,y,z}\Psi(\Psi(x, y), z)=0.$$
Consequently, $\Psi$ determines a linear deformation of $\mathcal{L}$. More precisely, for any $t \in \mathbb{R}$, the vector space $\mathcal{L}$ endowed with the bracket
$$[x, y]_t = [x, y]_0 + t\,\Psi(x, y)$$
admits the structure of a Lie algebra. Hence, every algebra of the family $\mathfrak{sl}_2\mathcal{G}(a_i,b_i)$ is non-rigid. For further details on formal deformations, we refer the reader to \cite{Ger-64, GO-93,Fox1993}.

In contrast to Theorem~\ref{thm3.4}, which establishes the non-vanishing of the second cohomology group for the algebra 
$\mathcal{L}=\mathfrak{sl}_2 \ltimes \mathcal{N}$ with $\mathcal{N}$ a direct sum of irreducible $\mathfrak{sl}_2$-modules, 
there exist examples for which $H^2(\mathcal{L},\mathcal{L})=0$.

\begin{example}
Let $\mathcal{L}=\mathfrak{sl}_2 \ltimes \mathcal{H}_{2n+1}$ be a Lie algebra, where $\mathcal{H}_{2n+1}$ denotes the 
$(2n+1)$-dimensional Heisenberg algebra. Then $\mathcal{H}_{2n+1}$ decomposes as a direct sum of two irreducible 
$\mathfrak{sl}_2$-modules, namely $V_{2n-1}$ and $V_0$. Applying arguments analogous to those used in the proof of 
Theorem~\ref{thm3.4}, one concludes that $H^2(\mathcal{L},\mathcal{L})=0$.
\end{example}

\section{More constructions of Lie algebras with only inner derivation}\label{sec4}

In this section, we continue the construction of Lie algebras whose derivations are all inner. Starting from Angelopoulos’s example of sympathetic Lie algebras, we present its a natural generalization. We then modify this approach to obtain two further classes: complete but non-perfect Lie algebras, and perfect Lie algebras with non-trivial center.

\begin{definition}
A Lie algebra $\mathcal{L}$ is called \emph{sympathetic} if it is perfect (i.e., $\mathcal{L}^2 = \mathcal{L}$), centerless and all its derivations are inner.
\end{definition}
The first examples of non-semisimple sympathetic Lie algebras were introduced by Angelopoulos \cite{An-1988}, whose construction relies on a Levi decomposition $\mathcal{L} = \mathfrak{sl}_2 \ltimes \mathcal{N}$ where the nilpotent radical $\mathcal{N}$ decomposes into exactly four irreducible $\mathfrak{sl}_2$-modules. 

It is remarkable that the construction of Lie-Angelopoulos algebras introduced in \cite{An-1988} 
extends to any $n \geq 4$. Indeed, let $\mathcal{L}_1, \dots, \mathcal{L}_n$ be pairwise non-isomorphic, non-trivial irreducible $\mathfrak{sl}_2$-modules satisfying the following conditions:
\begin{itemize}
\item[(i)] $\mathcal{L}_1 \wedge \mathcal{L}_1$ contains a submodule isomorphic to $\mathcal{L}_2$ and $\mathcal{L}_j \wedge \mathcal{L}_j$, $3 \leq j \leq n-1$, contains a submodule isomorphic to $\mathcal{L}_n$. Moreover, $\mathcal{L}_1 \otimes \mathcal{L}_j$, $2 \leq j \leq n-1$,  contains a submodule isomorphic to $\mathcal{L}_n$;
\item[(ii)] $\wedge^3 \mathcal{L}_1$ contains no $\mathfrak{sl}_2$-submodule isomorphic to $\mathcal{L}_n$
\end{itemize}
and defining the non-zero Lie bracket by
\begin{equation}\label{eq17}
[\mathcal{L}_1,\mathcal{L}_1]=\mathcal{L}_2,\quad [\mathcal{L}_1,\mathcal{L}_2]=\mathcal{L}_n, \quad [\mathcal{L}_j,\mathcal{L}_j]= \mathcal{L}_n =[\mathcal{L}_1,\mathcal{L}_j], \quad 3\leq j\leq n-1,
\end{equation}
then one verifies that $\mathcal{L} = \mathfrak{sl}_2 \ltimes \Big( \bigoplus\limits_{i=1}^{n} \mathcal{L}_i \Big)$ 
is a non-semisimple sympathetic Lie algebra, thereby generalizing the Lie-Angelopoulos construction to an arbitrary number of irreducible $\mathfrak{sl}_2$-modules.

Below we provide explicit choices of $\mathfrak{sl}_2$-modules $\mathcal{L}_i$ satisfying \eqref{eq17} by prescribing the sequence $(\lambda_1,\lambda_2,\dots,\lambda_n)$ with $\mathcal{L}_i = V_{\lambda_i}$.

\begin{example}\label{exam4.2}
Let $m=4k$ with $k\in\mathbb{N}$, and let $r_j$ ($3\le j\le n-1$) be pairwise distinct integers. We consider the following two choices of the sequence $(\lambda_1,\lambda_2,\dots,\lambda_n)$:

\begin{itemize}
\item[(a)] \quad \quad \quad  $\left( m, \, 2m - 2, \, 2m + 2r_3, \, \dots, \, 2m + 2r_{n-1}, \, 3m - 2 \right),$ \\[3mm]
where $0 \le r_j \le m-1$ and $r_j \ne \frac{m}{2} - 1$;\\

\item[(b)]  \quad \quad \quad $\left( m + 2, \, 2m + 2, \, 2m + 2r_3, \, \dots, \, 2m + 2r_{n-1}, \, 3m + 2 \right),$\\[3mm]
where $0 \le r_j \le m+2$, $r_j \ne 1$, and $r_j \ne \frac{m}{2} + 1$.
\end{itemize} 

In both cases, one can verifies that conditions (a)--(b) hold true. The Lie algebra
$$\mathcal{L} = \mathfrak{sl}_2 \ltimes \Big( \bigoplus_{i=1}^{n} \mathcal{L}_i \Big)$$
are referred as generalized Lie-Angelopoulos algebras.
\end{example}

We identify the $\mathfrak{sl}_2$-module $V_l=\mathrm{Span}\{e_0,e_1,\dots,e_l\}$ with the space of homogeneous polynomials of degree $l$ in two variables $p$ and $q$ by setting $e_i := p^{l-i}q^i$. Consider the Poisson bracket 
$$\{u, v\} = \frac{\partial u}{\partial p}\frac{\partial v}{\partial q} - \frac{\partial u}{\partial q}\frac{\partial v}{\partial p}.$$
Under the identification of $\mathfrak{sl}_2$ given by
\[
h = -pq, \quad e = \frac{1}{2}p^2, \quad f = -\frac{1}{2}q^2,
\]
the Poisson bracket reproduces the action \eqref{eq4} and is compatible with the multiplication in $\mathfrak{sl}_2$. 

For arbitrary elements $u,v \in \mathbb{C}[p,q]$, we consider the Moyal $\ast$-product (see \cite{bena96}):
$$u * v = uv + \sum_{r \ge 1} \frac{1}{2^r r!} P_r(u, v) \quad \mbox{with} \quad P_r(u, v) = \sum_{k=0}^r (-1)^k \binom{r}{k} \,\partial_p^{\,r-k}\partial_q^{\,k} u \;\partial_p^{\,k}\partial_q^{\,r-k} v.$$
One verifies that, for $s \in \mathfrak{sl}_2$ and $u,v \in \mathbb{C}[p,q]$, the following identities hold:
\[
\{s, u\} = s * u - u * s, \quad 
\{s, P_r(u, v)\} = P_r(\{s, u\}, v) + P_r(u, \{s, v\}).
\]
Therefore, the action of $\mathfrak{sl}_2$ on $\mathbb{C}[p,q]$ defined by $\{s, u\}$ yields a well-defined Lie algebra representation, and each $P_r$ is an $\mathfrak{sl}_2$-module endomorphism of $\mathbb{C}[p,q]$.

Using the identification corresponding to the minimal case in part (a) of Example~\ref{exam4.2}, namely $n=4$, $r_3=0$, and $m=4$, one recovers the multiplication table of the $35$-dimensional Lie-Angelopoulos algebra \cite{An-1988}. In contrast, the algebras considered in Section~\ref{sec2} satisfy
\begin{equation}\label{eq18}
\mathcal{L}^2=\mathcal{L}, \quad H^1(\mathcal{L}, \mathcal{L}) = 0, \quad H^0(\mathcal{L}, \mathcal{L}) \neq 0.
\end{equation}
On the other hand, by dropping the assumption that the modules $\mathcal{L}_i$ satisfying conditions (i)--(ii) are pairwise non-isomorphic, and again specializing to $n=4$, Benayadi constructed a $25$-dimensional sympathetic Lie algebra of the form
\[
\mathcal{L} = \mathfrak{sl}_2 \ltimes \Big( \bigoplus_{i=1}^{4} \mathcal{L}_i \Big),
\]
which fails to satisfy the last equality in \eqref{eq18}, namely $H^0(\mathcal{L}, \mathcal{L}) =0$ (see \cite{bena96}).

Notably, by \cite[Proposition~12]{Leg-63}, any finite-dimensional Lie algebra $\mathcal{L}$ over a field of characteristic $0$ satisfying $H^1(\mathcal{L}, \mathcal{L}) = 0$ and $H^0(\mathcal{L}, \mathcal{L}) \neq 0$ admits a decomposition of the form $\mathcal{L} = \mathcal{S} \ltimes \mathcal{N},$
where $\mathcal{S}$ is a Levi subalgebra.

It is remarkable that the second adjoint cohomology group of the $35$-dimensional algebra vanishes \cite{ABBBP-1992}. In contrast, consider the algebra satisfying condition~(a) of Example~\ref{exam4.2} with parameters $n=5$, $r_3=0$, $r_4=2$, and $m=4$, which has dimension $48$. Applying the method developed in Section~\ref{sec3}, together with symbolic computations in {\it Mathematica}, one finds that the dimension of the second adjoint cohomology group is equal to $2$.

For higher values of $n$, the exact computation of the dimension of the second adjoint cohomology group becomes difficult; therefore, we restrict ourselves to establishing its nonvanishing.

\begin{theorem}\label{thm4.3}
Let $\mathcal{L}$ be an algebra satisfying either condition~(a) or~(b) of Example~\ref{exam4.2} with $n \geq 5$. Then 
$H^2(\mathcal{L}, \mathcal{L}) \neq 0.$
\end{theorem}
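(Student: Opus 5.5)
By Remark~\ref{rem3.2} it suffices to produce a class in $H^2(\mathcal{N},\mathcal{L})^{\mathfrak{sl}_2}$ that is not a coboundary, where $\mathcal{N}=\bigoplus_{i=1}^n\mathcal{L}_i$ with brackets \eqref{eq17}. As in Theorem~\ref{thm3.4}, the plan is to construct an explicit $\mathfrak{sl}_2$-equivariant cocycle $\Psi$. The construction exploits the extra freedom that appears once $n\ge 5$: there are at least two ``middle'' modules $\mathcal{L}_3,\mathcal{L}_4$, each with nontrivial brackets only into $\mathcal{L}_n$.

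\textbf{Step 1 (the cocycle).} Condition~(i) gives that $\mathcal{L}_1\otimes\mathcal{L}_j$ contains $\mathcal{L}_n$ for $2\le j\le n-1$. I also expect, and would verify from the explicit weights in Example~\ref{exam4.2}, that $\mathcal{L}_3\otimes\mathcal{L}_4\supset\mathcal{L}_n$. Indeed, in case~(a) one has $\lambda_3+\lambda_4=4m+2r_3+2r_4\ge 3m-2\ge|\lambda_3-\lambda_4|$ with the right parity, and case~(b) is similar. Let $\pi\colon\mathcal{L}_3\otimes\mathcal{L}_4\to\mathcal{L}_n$ be the Clebsch--Gordan projection, for instance realized by a transvectant $P_r$ via the Moyal/Poisson description. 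Define $\Psi(u,v)=\pi(u\otimes v)$ for $u\in\mathcal{L}_3$, $v\in\mathcal{L}_4$, extend antisymmetrically, and set $\Psi=0$ on all other pairs of components. Then $\Psi$ is $\mathfrak{sl}_2$-invariant by construction.

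The cocycle identity $d_2\Psi(a,b,c)=0$ only involves triples that meet both $\mathcal{L}_3$ and $\mathcal{L}_4$. The terms of the form $[a,\Psi(b,c)]$ lie in $[\mathcal{N},\mathcal{L}_n]=0$, since $\mathcal{L}_n$ is central in $\mathcal{N}$. The terms of the form $\Psi([a,b],c)$ vanish because $[\mathcal{N},\mathcal{N}]\subseteq\mathcal{L}_2\oplus\mathcal{L}_n$, and $\Psi$ is zero on $\mathcal{L}_2$ and $\mathcal{L}_n$. Hence $\Psi\in Z^2(\mathcal{N},\mathcal{L})^{\mathfrak{sl}_2}$.

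\textbf{Step 2 (non-triviality).} Suppose $\Psi=d_1\phi$ with $\phi\in C^1(\mathcal{N},\mathcal{L})^{\mathfrak{sl}_2}$. By Schur's lemma, and because the $\mathcal{L}_i$ are pairwise non-isomorphic and non-trivial, $\phi$ maps each $\mathcal{L}_i$ by a scalar $c_i$ into $\mathcal{L}_i$, possibly plus a component into $\mathfrak{sl}_2\cong V_2$ when $\lambda_i=2$. Since every $\lambda_i\ge m\ge4$, this extra component cannot occur, so $\phi|_{\mathcal{L}_i}=c_i\,\mathrm{id}$. Evaluating on $u\in\mathcal{L}_3$, $v\in\mathcal{L}_4$ gives
\[
(d_1\phi)(u,v)=[\phi u,v]+[u,\phi v]-\phi([u,v])=(c_3+c_4)[u,v]-c_n\cdot 0=0,
\]
because $[\mathcal{L}_3,\mathcal{L}_4]=0$ in \eqref{eq17}. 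On the other hand $\Psi(u,v)=\pi(u\otimes v)\ne0$. This is a contradiction, so $[\Psi]\ne0$ and $H^2(\mathcal{L},\mathcal{L})\neq0$.

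\textbf{Main obstacle.} The key point is Step~1's requirement that $\mathcal{L}_n$ occurs in $\mathcal{L}_3\otimes\mathcal{L}_4$. This has to be checked against the parameter ranges of Example~\ref{exam4.2} in both cases~(a) and~(b) via the Clebsch--Gordan formula \eqref{eq15}, including the edge values of $r_j$.

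If this check failed for some choice of parameters, the fallback would be to deform a single bracket instead. For example, take $\Psi|_{\mathcal{L}_1\wedge\mathcal{L}_3}$ to be the bracket $[\mathcal{L}_1,\mathcal{L}_3]$ itself and $\Psi=0$ elsewhere. Coboundaries force relations of the form $c_1+c_3=c_n$, $c_1+c_4=c_n$, $2c_3=c_n$, $2c_4=c_n$, $2c_1=c_2$, $c_1+c_2=c_n$, and so on, which tie the scalars together. With $n\ge5$ there are enough independent brackets into $\mathcal{L}_n$ that a cocycle rescaling only $[\mathcal{L}_1,\mathcal{L}_3]$ is incompatible with them, provided the cocycle condition survives the triples $(\mathcal{L}_1,\mathcal{L}_1,\mathcal{L}_3)$. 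That survival would be ensured by condition~(ii) together with $[\mathcal{L}_2,\mathcal{L}_3]=0$.
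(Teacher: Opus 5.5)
Your proposal is correct and follows essentially the same route as the paper. The paper also defines $\Psi$ as an $\mathfrak{sl}_2$-equivariant map $\mathcal{L}_3\otimes\mathcal{L}_4\to\mathcal{L}_n$, zero elsewhere. It obtains the cocycle condition from the centrality of $\mathcal{L}_n$ in $\mathcal{N}$ and the fact that $[\mathcal{N},\mathcal{N}]$ meets neither $\mathcal{L}_3$ nor $\mathcal{L}_4$, and it rules out $\Psi=d_1\Phi$ using Schur's lemma together with $[\mathcal{L}_3,\mathcal{L}_4]=0$. Your explicit Clebsch--Gordan check that $V_{\lambda_n}\subset\mathcal{L}_3\otimes\mathcal{L}_4$ in both cases (a) and (b) is valid, so the fallback paragraph is unnecessary.
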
 

\begin{proof}
The assumption $n \geq 5$ guarantees the existence of $\mathfrak{sl}_2$-modules 
\(\mathcal{L}_3 = V_{2m+2r_3}\) and \(\mathcal{L}_4 = V_{2m+2r_4}\). According to \eqref{eq15}, the exterior square \(\wedge^2 \mathcal{N}\) contains a direct summand \(\mathcal{L}_3 \otimes \mathcal{L}_4\). Moreover, the decomposition of \(\mathcal{L}_3 \otimes \mathcal{L}_4\) contains a term isomorphic to \(\mathcal{L}_n = V_{3m \pm 2}\). Consequently, there exists a non-zero \(\mathfrak{sl}_2\)-module homomorphism 
\(\varphi\colon\mathcal{L}_3 \otimes \mathcal{L}_4\to \mathcal{L}_n\). We define a bilinear map \(\Psi \in C^2(\mathcal{L}, \mathcal{L})\) by
$\Psi_{\mid \mathcal{L}_3 \otimes \mathcal{L}_4} = \varphi,$ and zero elsewhere.

Since \(\mathcal{L}_n \subseteq \operatorname{Center}(\mathcal{N})\) and \(\operatorname{Im}(\Psi) = \mathcal{L}_n\), to verify that \(\Psi \in Z^2(\mathcal{N}, \mathcal{L})^{\mathfrak{sl}_2}\) it suffices to check the identity
\[
\Psi([x,y], z) + \Psi([y,z], x) + \Psi([z,x], y) = 0.
\]

Observe that \(\Psi\) is non-zero only on the components \(\mathcal{L}_3\) and \(\mathcal{L}_4\), and by \eqref{eq17} the derived ideal \([\mathcal{N}, \mathcal{N}]\) contains no elements of \(\mathcal{L}_3\) or \(\mathcal{L}_4\). Therefore, $\Psi([\mathcal{N}, \mathcal{N}], \mathcal{N}) = 0,$
which shows that \(\Psi \in Z^2(\mathcal{N}, \mathcal{L})^{\mathfrak{sl}_2}\).

Assume, for the sake of contradiction, that \(\Psi \in B^2(\mathcal{N}, \mathcal{L})^{\mathfrak{sl}_2}\), i.e., \(\Psi = d_1 \Phi\) for some \(\mathfrak{sl}_2\)-invariant \(\Phi \in C^1(\mathcal{N}, \mathcal{L})\). Note that \(\mathfrak{sl}_2 \cong V_2\) and that the modules \(\mathcal{L}_i\) (\(1 \leq i \leq n\)) are pairwise non-isomorphic. By Schur's lemma, it follows that
\[
\Phi_{\mid \mathcal{L}_3} = a\, \mathrm{id}_{\mid \mathcal{L}_3}, \qquad \Phi_{\mid \mathcal{L}_4} = b\, \mathrm{id}_{\mid \mathcal{L}_4}, \quad a,b \in \mathbb{C}.
\]
However, by \eqref{eq17} we have \([\mathcal{L}_3, \mathcal{L}_4] = 0\). Therefore, \(\Psi = d_1 \Phi\) implies \(\Psi = 0\), a contradiction. Hence, $0 \neq \Psi \in H^2(\mathcal{L}, \mathcal{L}).$ 
\end{proof}

We construct an example by modifying the construction from~\cite{An-1988} in the case $n=4$. More precisely, we enlarge the original algebra by adjoining a trivial module together with an additional copy of an $\mathfrak{sl}_2$-module occurring as a direct summand of $\mathcal{N}$. The multiplication is then adjusted so that the resulting semidirect product $\mathfrak{sl}_2 \ltimes \mathcal{N}$ admits only inner derivations.

\begin{example}\label{exam4.4}
Let $\mathcal{L}=\mathfrak{sl}_2 \ltimes \mathcal{N}$, where $\mathcal{N}=\bigoplus\limits_{i=1}^6 \mathcal{L}_i,$
with
\[
\mathcal{L}_1\cong V_0,\quad \mathcal{L}_2\cong\mathcal{L}_3\cong V_6,\quad \mathcal{L}_4\cong V_8,\quad \mathcal{L}_5\cong V_{10},\quad \mathcal{L}_6\cong V_{14},
\]
and satisfying the only non-zero relations
\begin{equation}\label{eq19}
\begin{array}{llll}
[\mathcal{L}_1,\mathcal{L}_2]=\mathcal{L}_3, &
[\mathcal{L}_2,\mathcal{L}_2]=\mathcal{L}_5, &
[\mathcal{L}_2,\mathcal{L}_4]=\mathcal{L}_6, \\[4pt]
[\mathcal{L}_2,\mathcal{L}_5]=\mathcal{L}_6, &
[\mathcal{L}_4,\mathcal{L}_4]=\mathcal{L}_6 \oplus \mathcal{L}_3. &
\end{array}
\end{equation}
Note that after a suitable relabeling of the indices in $\mathcal{L}_2, \mathcal{L}_4, \mathcal{L}_5$, and $\mathcal{L}_6$, the relations~\eqref{eq19}, modulo $\mathcal{L}_3$, coincide with those in~\eqref{eq17}.

Using the identification detailed procedure described in~\cite{bena96}, one obtains the multiplication table of $\mathcal{L}$. Arguing as in the proof of Proposition~\ref{prop2.11}, one verifies that $\mathcal{L}$ is complete but not perfect.
\end{example}

It is noteworthy that computations performed using the {\it Mathematica} package for the $53$-dimensional Lie algebra from Example~\ref{exam4.4} show that $\dim H^2(\mathcal{L}, \mathcal{L}) = 2.$

Building on Example~\ref{exam4.4}, we construct a complete but non-perfect Lie algebra by extending the collection of modules $V_i$, $i \in \{0,2,4,6\}$, with $(n-6)$ additional irreducible modules of odd dimension greater than or equal to $9$, each occurring with multiplicity one.

\begin{theorem}\label{thm4.5}
Let $n\geq 6$ and let $\mathcal{N} = \bigoplus\limits_{i=1}^n \mathcal{L}_i$, where $\mathcal{L}_i = V_{\lambda_i}$ and
\[
(\lambda_1,\lambda_2,\lambda_3,\lambda_4,\lambda_5,\lambda_6,\lambda_7,\dots,\lambda_n)
= \left(0,\,6,\,6,\,4,\,6,\,2,\,2r_7,\dots,2r_n\right),
\]
with $r_j \geq 4$ pairwise distinct integers. Assume that the only nonzero products are those given in~\eqref{eq19}, together with $[\mathcal{L}_k, \mathcal{L}_k] = \mathcal{L}_6$ for $7 \leq k \leq n.$ Then the Lie algebra $\mathcal{L} = \mathfrak{sl}_2 \ltimes \mathcal{N}$ is complete.
\end{theorem}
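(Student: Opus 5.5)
The plan is to show the two parts of completeness separately: the center of $\mathcal{L}$ is trivial, and every derivation of $\mathcal{L}$ is inner. The derivation part uses the reduction of Proposition~\ref{prop2.11} together with Theorem~\ref{thm2.4}.

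\emph{Center.} The only trivial $\mathfrak{sl}_2$-submodule of $\mathcal{L}$ is $\mathcal{L}_1\cong V_0$, since $\lambda_i\neq 0$ for $i\ge 2$ and $\mathfrak{sl}_2$ has trivial center. Because $\operatorname{Center}(\mathcal{L})$ is an $\mathfrak{sl}_2$-submodule killed by $\mathfrak{sl}_2$, it lies in $\mathcal{L}_1$. But $[\mathcal{L}_1,\mathcal{L}_2]=\mathcal{L}_3\neq 0$, so the center is zero.

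\emph{Reduction for derivations.} Let $D\in\Der(\mathcal{L})$. As in the proof of Proposition~\ref{prop2.11}, Whitehead's first lemma lets us subtract an inner derivation. After that we may assume $D(\mathfrak{sl}_2)=0$, $D(\mathcal{N})\subseteq\mathcal{N}$, and $D|_{\mathcal{N}}$ commutes with $\operatorname{ad}(\mathfrak{sl}_2)|_{\mathcal{N}}$. By Theorem~\ref{thm2.4} it suffices to show that such a $D$ is inner. I expect to show concretely that $D=\operatorname{ad}_t$ for some $t\in\mathcal{L}_1$. Note that $\operatorname{ad}_t$ with $t\in\mathcal{L}_1$ is a nonzero $\mathfrak{sl}_2$-equivariant derivation, mapping $\mathcal{L}_2\to\mathcal{L}_3$, so this is the expected answer rather than $D=0$.

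\emph{Form of $D$ from Schur's lemma.} The isotypic components of $\mathcal{N}$ are as follows. The three copies $\mathcal{L}_2,\mathcal{L}_3,\mathcal{L}_5$ of $V_6$ form one component. Each of $\mathcal{L}_1\cong V_0$, $\mathcal{L}_4\cong V_4$, $\mathcal{L}_6\cong V_2$ and $\mathcal{L}_k\cong V_{2r_k}$ ($k\ge7$) occurs with multiplicity one; here $r_k\ge4$ pairwise distinct guarantees no further coincidences. Hence:
\begin{itemize}
\item $D$ acts by a scalar $d_i$ on $\mathcal{L}_1,\mathcal{L}_4,\mathcal{L}_6,\mathcal{L}_k$;
\item on $\mathcal{L}_2\oplus\mathcal{L}_3\oplus\mathcal{L}_5$, $D$ is given by a $3\times3$ scalar matrix $(a_{ij})$ with respect to fixed identifications with $V_6$.
\end{itemize}

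\emph{Diagonal entries.} Apply the Leibniz rule to the relations \eqref{eq19} and compare components.
\begin{itemize}
\item $[\mathcal{L}_k,\mathcal{L}_k]=\mathcal{L}_6$ gives $d_6=2d_k$ for $k\ge7$.
\item The $\mathcal{L}_6$-component of $[\mathcal{L}_4,\mathcal{L}_4]$ gives $d_6=2d_4$.
\item $[\mathcal{L}_2,\mathcal{L}_2]=\mathcal{L}_5$ gives $a_{55}=2a_{22}$.
\item $[\mathcal{L}_2,\mathcal{L}_4]=\mathcal{L}_6$ gives $a_{22}+d_4=d_6$.
\item $[\mathcal{L}_2,\mathcal{L}_5]=\mathcal{L}_6$ gives $a_{22}+a_{55}=d_6$.
\end{itemize}
Solving, $a_{22}=d_4$ and $3a_{22}=2d_4$, so $d_4=a_{22}=a_{55}=d_6=d_k=0$. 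Then $[\mathcal{L}_1,\mathcal{L}_2]=\mathcal{L}_3$ gives $a_{33}=d_1+a_{22}=d_1$. The $\mathcal{L}_3$-component of $[\mathcal{L}_4,\mathcal{L}_4]$ gives $a_{33}=2d_4=0$, hence $d_1=0$.

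\emph{Off-diagonal entries (main obstacle).} I would use the vanishing brackets, each of which is a nontrivial constraint because the brackets in \eqref{eq19} are nonzero equivariant maps:
\begin{itemize}
\item $[\mathcal{L}_1,\mathcal{L}_5]=0$ and $[\mathcal{L}_1,\mathcal{L}_3]=0$ force $a_{25}=0$ and $a_{23}=0$, where $a_{ij}$ is the $\mathcal{L}_i$-component of $D|_{\mathcal{L}_j}$.
\item $[\mathcal{L}_3,\mathcal{L}_2]=0$, $[\mathcal{L}_3,\mathcal{L}_4]=0$, $[\mathcal{L}_5,\mathcal{L}_5]=0$ and $[\mathcal{L}_5,\mathcal{L}_4]=0$ kill the remaining entries, except the $\mathcal{L}_2\to\mathcal{L}_3$ entry $a_{32}$.
\item Applying the Leibniz rule to $[\mathcal{L}_2,\mathcal{L}_2]$ and $[\mathcal{L}_2,\mathcal{L}_4]$ shows that $a_{32}$ is unconstrained, as expected.
\end{itemize}
The remaining entry equals $\operatorname{ad}_t$ for a suitable $t\in\mathcal{L}_1$, because $\operatorname{ad}_t|_{\mathcal{L}_2}$ realizes every scalar multiple of the fixed identification $\mathcal{L}_2\to\mathcal{L}_3$. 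So $D-\operatorname{ad}_t=0$.

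The delicate point is to check, with the explicit Moyal-bracket structure constants of \cite{bena96}, that the equivariant maps used are nonzero and that contributions through $\mathcal{L}_3$ in $[\mathcal{L}_4,\mathcal{L}_4]$ do not create an extra free parameter. In particular, one must verify that $\mathcal{L}_2\otimes\mathcal{L}_2\to\mathcal{L}_3$ does not occur: $\wedge^2V_6$ does contain $V_6$, but the relations \eqref{eq19} prescribe $[\mathcal{L}_2,\mathcal{L}_2]=\mathcal{L}_5$ only. I would confirm this last step by the same componentwise computation used in Example~\ref{exam4.4}.
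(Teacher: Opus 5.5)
Your strategy is the paper's: reduce to an $\mathfrak{sl}_2$-equivariant derivation of $\mathcal{N}$, apply Schur's lemma, and let the Leibniz rule force it to equal $\operatorname{ad}_t$ with $t\in\mathcal{L}_1$. Your center argument and your diagonal computation ($d_4=a_{22}=a_{55}=d_6=d_k=0$, then $d_1=0$) are correct, and more explicit than the paper's sketch.

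\textbf{The gap is in the off-diagonal step.} The vanishing brackets $[\mathcal{L}_3,\mathcal{L}_2]$, $[\mathcal{L}_3,\mathcal{L}_4]$, $[\mathcal{L}_5,\mathcal{L}_5]$ and $[\mathcal{L}_5,\mathcal{L}_4]$ only give $a_{23}=a_{53}=a_{25}=0$. They say nothing about $a_{35}$ (the $\mathcal{L}_5\to\mathcal{L}_3$ entry) or $a_{52}$ (the $\mathcal{L}_2\to\mathcal{L}_5$ entry). The entry $a_{35}$ enters these identities only through brackets with $\mathcal{L}_3$, which is central in $\mathcal{N}$. The entry $a_{52}$ enters only through brackets of $\mathcal{L}_5$ with $\mathcal{L}_2$, and none of your four identities produces such a bracket with a nonzero coefficient.

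\textbf{How to close it.} Both entries are killed only by the Leibniz rule for $[\mathcal{L}_2,\mathcal{L}_2]=\mathcal{L}_5$, which you use just to note that $a_{32}$ is free. Write $u_j$ for the copy of $u\in V_6$ in $\mathcal{L}_j$, and set $[u_2,v_2]=w_5$.
\begin{itemize}
\item The $\mathcal{L}_3$-component of $D(w_5)$ is $a_{35}w_3$, while $[Du_2,v_2]+[u_2,Dv_2]$ has no $\mathcal{L}_3$-component. Hence $a_{35}=0$.
\item The $\mathcal{L}_6$-component of the right-hand side is
\[
a_{52}\bigl([u_5,v_2]+[u_2,v_5]\bigr)=a_{52}\bigl([u_2,v_5]-[v_2,u_5]\bigr),
\]
while $D(w_5)$ has no $\mathcal{L}_6$-component.
\end{itemize}
The displayed expression equals $2a_{52}[u_2,v_5]$, giving $a_{52}=0$, precisely because the equivariant pairing $V_6\otimes V_6\to V_2$ is alternating. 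This holds since $\wedge^2V_6=V_{10}\oplus V_6\oplus V_2$ while $\operatorname{Sym}^2V_6=V_{12}\oplus V_8\oplus V_4\oplus V_0$.

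If that pairing were symmetric, the map $u_2\mapsto u_5$ would satisfy every Leibniz identity and be an equivariant outer derivation. So this symmetry-type check is the real delicate point. The absence of an $\mathcal{L}_2\wedge\mathcal{L}_2\to\mathcal{L}_3$ component, which you flag, needs no verification, since it is prescribed by \eqref{eq19}.

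With these two constraints added, your argument closes and gives $D|_{\mathcal{N}}=\operatorname{ad}_{a_{32}t}|_{\mathcal{N}}$, as you claim.
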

\begin{proof}
In view of~\eqref{eq15}, the module $V_2$ occurs in $\wedge^2 V_{2r_j}$ for every $r_j$, and hence the products $[\mathcal{L}_k, \mathcal{L}_k] = \mathcal{L}_6$ are well defined. A direct verification shows that the products specified in the assumptions of theorem endow $\mathcal{L}$ with the structure of a Lie algebra. Moreover, it is immediate that $\operatorname{Center}(\mathcal{L}) = \{0\}$.

Let $D \in \operatorname{Der}(\mathcal{L})$. Then $D$ can be written in the form $D = d + \operatorname{ad}_y,$ for some $y \in \mathcal{L}$, where $d \in \operatorname{Der}(\mathcal{N}, \mathcal{N})$ is, in addition, a homomorphism of $\mathfrak{sl}_2$-modules. Applying Schur’s Lemma and verifying the Leibniz rule for $d$ on the products given in~\eqref{eq19}, together with the relation $[\mathcal{L}_k, \mathcal{L}_k] = \mathcal{L}_6$, one obtains a system of constraints that forces $d = \operatorname{ad}_x$ for a suitable element $x \in \mathcal{L}_1$. Consequently, $D$ is an inner derivation, and hence $\mathcal{L}$ is complete. On the other hand, $\mathcal{L}$ is not perfect, since $\mathcal{L}_1 \not\subseteq [\mathcal{L}, \mathcal{L}]$.
\end{proof}

It is worth noting that Theorem~\ref{thm4.5} improves the dimension of the
complete non-perfect Lie algebra constructed in Example~\ref{exam4.4}. Indeed,
when $n=6$, Theorem~\ref{thm4.5} yields a $33$-dimensional Lie algebra with the
same two properties, namely completeness and non-perfectness. Moreover,
computations with the \textit{Mathematica} package show that
$\dim H^2(\mathcal{L},\mathcal{L})=1,$
so that its second adjoint cohomology is smaller than that of the
$53$-dimensional example in Example~\ref{exam4.4}.

We now present a different construction which further reduces the dimension of
such examples to $29$.

\begin{example}\label{exam4.7}
Let $\mathcal L=\mathfrak{sl}_2 \ltimes \mathcal N$, where
$\mathcal N=\bigoplus\limits_{i=1}^6 \mathcal L_i$ and
\[
\mathcal L_1\cong V_0,\quad
\mathcal L_2\cong \mathcal L_4\cong V_6,\quad
\mathcal L_3\cong V_4,\quad
\mathcal L_5\cong \mathcal L_6\cong V_2,
\]
satisfying
\begin{equation*}
\begin{array}{llllll}
[\mathcal L_2,\mathcal L_2]=\mathcal L_4 \quad\quad &
[\mathcal L_3,\mathcal L_3]=\mathcal L_5,\quad\quad &
[\mathcal L_1,\mathcal L_6]=\mathcal L_5, \\
{}[\mathcal L_2,\mathcal L_4]=\mathcal L_5 \quad\quad &
[\mathcal L_2,\mathcal L_3]=\mathcal L_5,\quad\quad &
[\mathcal L_3,\mathcal L_6]=\mathcal L_5.
\end{array}
\end{equation*}
Then, using the identification above, one can verify that the
$29$-dimensional Lie algebra $\mathcal L$ is complete but non-perfect.
\end{example}

\begin{remark}
The algebras constructed in Example~\ref{exam4.4}, Theorem~\ref{thm4.5}, and
Example~\ref{exam4.7} provide examples yielding a positive answer to a question
of Carles \cite[Theorem 3.4]{Car-84} concerning the existence of complete
non-perfect Lie algebras of the form $\mathcal S\ltimes\mathcal N$. In
particular, Example~\ref{exam4.7} gives a $29$-dimensional example of this
type.
\end{remark}

Next we construct a family of perfect Lie algebras that admit only inner derivations and non-trivial center.

\begin{theorem}\label{thm4.7} Let $n\geq 6$ and let $\mathcal{N} = \bigoplus\limits_{i=1}^n \mathcal{L}_i$, where $\mathcal{L}_i = V_{\lambda_i}$ and
\[
(\lambda_1,\lambda_2,\lambda_3,\lambda_4,\lambda_5,\lambda_6,\lambda_7,\dots,\lambda_n)
= \left(6,\,4,\,6,\,2,\,4,\,0,\, 2r_7,\dots,2r_n\right),
\]
with $r_j \geq 4$ pairwise distinct integers. Assume that the only non-zero products are given by
\[
[\mathcal{L}_1, \mathcal{L}_1] = \mathcal{L}_3,\quad 
[\mathcal{L}_2, \mathcal{L}_5] = \mathcal{L}_6,\quad 
[\mathcal{L}_1, \mathcal{L}_i] = \mathcal{L}_4,\quad 
[\mathcal{L}_k, \mathcal{L}_k] = \mathcal{L}_4,
\]
where $i \in \{2,3,5\}$ and $k \in \{2,5,7,8,\dots,n\}$.

Then the Lie algebra $\mathcal{L} = \mathfrak{sl}_2 \ltimes \mathcal{N}$ has the property that all its derivations are inner and its center is non-trivial.
\end{theorem}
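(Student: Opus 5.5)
The plan has three stages: check that the brackets define a Lie algebra, read off the center, and then reduce the derivation question to Theorem~\ref{thm2.4} and solve a small linear system on $\mathfrak{sl}_2$-equivariant maps.

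\textbf{Well-definedness and the Jacobi identity.} First we check that each prescribed bracket is an $\mathfrak{sl}_2$-equivariant map allowed by \eqref{eq15}:
\begin{itemize}
\item $V_6\subset\wedge^2V_6$ and $V_2\subset\wedge^2V_{2r}$ for every $r$;
\item $V_2\subset V_6\otimes V_4$, $V_2\subset V_6\otimes V_6$ (inside $\wedge^2 V_6$) and $V_2\subset\wedge^2V_4$;
\item $V_0\subset V_4\otimes V_4$, realized by the invariant pairing $\mathcal L_2\times\mathcal L_5\to\mathcal L_6$.
\end{itemize}
We realize all of these through the transvectants $P_r$ of the Moyal product, as in \cite{bena96}.

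Next we grade $\mathcal N$ by placing $\mathcal L_1,\mathcal L_2,\mathcal L_5,\mathcal L_7,\dots,\mathcal L_n$ in degree $1$, $\mathcal L_3,\mathcal L_6$ in degree $2$, and $\mathcal L_4$ in degree $\geq 3$. The only nonzero triple brackets are then $[\mathcal L_1,[\mathcal L_1,\mathcal L_1]]\subseteq\mathcal L_4$. The Jacobi identity therefore reduces to showing that the equivariant map $\wedge^3\mathcal L_1\to\mathcal L_4$, $x\wedge y\wedge z\mapsto\circlearrowleft[x,[y,z]]$, vanishes. Here we use the Jacobi identity of $\mathfrak{sl}_2\cong$ the degree-two polynomials under the Poisson bracket, or more directly the vanishing of the relevant combination of $P_r$'s. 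If this map is not forced to vanish, we adjust the scalar of $[\mathcal L_1,\mathcal L_3]$ against $[\mathcal L_1,\mathcal L_1]$ so that it does.

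\textbf{Center.} The center of $\mathcal L$ is $\mathcal L_6\cong V_0$. It is $\mathfrak{sl}_2$-trivial and brackets trivially with all of $\mathcal N$, so it is central. Conversely, every $\mathcal L_i$ with $i\neq 6$ is a nontrivial $\mathfrak{sl}_2$-module, so no nonzero element of it can be central.

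\textbf{Derivations.} By Theorem~\ref{thm2.4}, or by the decomposition used in the proof of Proposition~\ref{prop2.11}, it suffices to show that every derivation $d$ of $\mathcal N$ commuting with $\mathfrak{sl}_2$ is inner. Since such a $d$ is $\mathfrak{sl}_2$-equivariant, Schur's lemma determines its shape:
\begin{itemize}
\item $d$ acts by a scalar on $\mathcal L_4$ and $\mathcal L_6$, and on each $\mathcal L_k$ with $k\geq 7$;
\item $d$ acts by a $2\times2$ matrix on each isotypic pair $(\mathcal L_1,\mathcal L_3)$ and $(\mathcal L_2,\mathcal L_5)$.
\end{itemize}
Inner derivations $\operatorname{ad}_x$ with $x\in\mathcal N^{\mathfrak{sl}_2}=\mathcal L_6$ are zero, so we must prove $d=0$.

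We apply the Leibniz rule to each defining product and compare components. The main points are:
\begin{itemize}
\item $[\mathcal L_3,\mathcal L_3]=0$ while $[\mathcal L_1,\mathcal L_1]\neq0$, and $[\mathcal L_1,\mathcal L_3]\neq0$. These kill the off-diagonal parts of the $(\mathcal L_1,\mathcal L_3)$ block, leaving $d|_{\mathcal L_1}=\alpha$ and $d|_{\mathcal L_3}=2\alpha$.
\item $[\mathcal L_2,\mathcal L_2]$ and $[\mathcal L_5,\mathcal L_5]$ both land in $\mathcal L_4$ (through $V_2\subset\wedge^2V_4$), while $[\mathcal L_2,\mathcal L_5]$ lands in $\mathcal L_6$. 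Together these kill the off-diagonal parts of the $(\mathcal L_2,\mathcal L_5)$ block, leaving scalars $\beta$ on $\mathcal L_2$ and $\gamma$ on $\mathcal L_5$.
\end{itemize}
This yields a linear system for $\delta=d|_{\mathcal L_4}$ and the remaining scalars:
\begin{itemize}
\item $[\mathcal L_1,\mathcal L_3]=\mathcal L_4$ gives $\delta=3\alpha$;
\item $[\mathcal L_1,\mathcal L_2]=\mathcal L_4$ gives $\delta=\alpha+\beta$;
\item $[\mathcal L_1,\mathcal L_5]=\mathcal L_4$ gives $\delta=\alpha+\gamma$;
\item $[\mathcal L_2,\mathcal L_2]=\mathcal L_4$ gives $\delta=2\beta$;
\item $[\mathcal L_5,\mathcal L_5]=\mathcal L_4$ gives $\delta=2\gamma$;
\item $[\mathcal L_k,\mathcal L_k]=\mathcal L_4$ gives $\delta=2\mu_k$ for $k\geq 7$.
\end{itemize}
From $3\alpha=\alpha+\beta$ and $\alpha+\beta=2\beta$ we get $\beta=2\alpha$ and $\beta=\alpha$, hence $\alpha=\beta=\gamma=\delta=\mu_k=0$. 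Finally, $[\mathcal L_2,\mathcal L_5]=\mathcal L_6$ forces $d|_{\mathcal L_6}=\beta+\gamma=0$, so $d=0$.

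The main obstacle is the first stage, namely the Jacobi identity on $\wedge^3\mathcal L_1$, together with confirming that the chosen equivariant maps are nonzero so that the Leibniz comparisons in the last stage are genuine. I would settle both by an explicit transvectant computation.
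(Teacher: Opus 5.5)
Your overall route is the one the paper intends: reduce via Leger/Whitehead to an $\mathfrak{sl}_2$-equivariant $d\in\operatorname{Der}(\mathcal N)$, which must vanish because $\mathcal N^{\mathfrak{sl}_2}=\mathcal L_6$ is central; then apply Schur on the isotypic blocks and the Leibniz rule on the defining products. Your $(\mathcal L_1,\mathcal L_3)$ block is also handled correctly. Since $V_2\subset\wedge^2V_6$, the cross terms $[\theta x,y]+[x,\theta y]$ add rather than cancel, and this kills the off-diagonal entries.

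The genuine gap is your claim that $[\mathcal L_2,\mathcal L_2]$, $[\mathcal L_5,\mathcal L_5]$ and $[\mathcal L_2,\mathcal L_5]$ kill the off-diagonal part of the $(\mathcal L_2,\mathcal L_5)$ block. Fix an $\mathfrak{sl}_2$-isomorphism $\psi\colon\mathcal L_2\to\mathcal L_5$. Write $d(u)=\beta u+\rho\,\psi u$ and $d(\psi u)=\omega u+\gamma\,\psi u$. Write $[u,\psi v]=\epsilon(u,v)z$ with $z$ spanning $\mathcal L_6$, and $[\psi u,\psi v]=c[u,v]$. By \eqref{eq15}, $\wedge^2V_4\cong V_6\oplus V_2$ contains no $V_0$, so $\epsilon$ is symmetric. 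Hence in the Leibniz rule for $[u,v]$ the cross term is $\rho\bigl([\psi u,v]+[u,\psi v]\bigr)=\rho\bigl(-\epsilon(v,u)+\epsilon(u,v)\bigr)z=0$, and the same happens for $[\psi u,\psi v]$. These two products give only $\delta=2\beta=2\gamma$. The product $[u,\psi v]$ gives $d|_{\mathcal L_6}=\beta+\gamma$ together with the single relation $\omega=-c\rho$. So one off-diagonal parameter survives. Your equations $\delta=\alpha+\beta$ and $\delta=\alpha+\gamma$ are therefore wrong, and your final linear system is not the right one.

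Keep $\rho,\omega$ and write $[x,\psi u]=c'[x,u]$ for $x\in\mathcal L_1$. The products $[\mathcal L_1,\mathcal L_2]$ and $[\mathcal L_1,\mathcal L_5]$ then give $\delta=\alpha+\beta+c'\rho$ and $c'\delta=c'(\alpha+\gamma)+\omega$. Combined with $\delta=3\alpha=2\beta=2\gamma$, this yields $c'\rho=\alpha/2$ and $\omega=c'\alpha/2$. Then $\omega=-c\rho$ forces $\alpha(c+c'^2)=0$.

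The constants $c,c'$ are free: the Jacobi identity constrains nothing but $\wedge^3\mathcal L_1\to\mathcal L_4$. So $d=0$ follows only under the nondegeneracy condition $c+c'^2\neq0$. When $c=-c'^2$, define $d$ by
$d|_{\mathcal L_1}=1$, $d|_{\mathcal L_3}=2$, $d|_{\mathcal L_4}=d|_{\mathcal L_6}=3$, $d|_{\mathcal L_k}=\tfrac32$ for $k\ge7$, $d(u)=\tfrac32u+\tfrac{1}{2c'}\psi u$, and $d(\psi u)=\tfrac{c'}{2}u+\tfrac32\psi u$.
This is a nonzero $\mathfrak{sl}_2$-equivariant derivation of $\mathcal N$, hence an outer derivation of $\mathcal L$; you can check every Leibniz relation directly. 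Your argument therefore proves too much. You must carry the off-diagonal entries through and either impose or verify $c\neq-c'^2$ for your concrete realization, for instance the transvectant one. The theorem, with its scalars left unspecified, is only valid under this condition.

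Separately, your ``main obstacle'' is not one. We have $\wedge^3V_6\cong V_{12}\oplus V_8\oplus V_6\oplus V_4\oplus V_0$, which contains no $V_2$. So the Jacobiator $\wedge^3\mathcal L_1\to\mathcal L_4$ vanishes by Schur, exactly as condition (ii) does in the Angelopoulos construction. Your fallback of rescaling $[\mathcal L_1,\mathcal L_3]$ against $[\mathcal L_1,\mathcal L_1]$ could not have helped anyway. Every term of that Jacobiator is the same composite, so rescaling only rescales it.
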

\begin{proof} One readily verifies that $\operatorname{Center}(\mathcal{L}) = \mathcal{L}_6$ and that $\mathcal{L}^2 = \mathcal{L}$, so that $\mathcal{L}$ is perfect. The equality $\operatorname{Der}(\mathcal{L}) = \operatorname{Inn}(\mathcal{L})$ follows by arguments analogous to those used in the proof of Theorem~\ref{thm4.5}.
\end{proof}

Setting $n=6$ in Theorem~\ref{thm4.7}, we obtain a perfect $31$-dimensional Lie algebra $\mathcal{L}$ satisfying $\operatorname{Der}(\mathcal{L})=\operatorname{Inn}(\mathcal{L})$ and $\operatorname{Center}(\mathcal{L})\neq 0$. This provides the smallest-dimensional example of such a Lie algebra currently known; previous constructions due to Luks~\cite{Luk-70} and Sato~\cite{Sato-71} have dimensions $58$ and $41$, respectively.

Finally, we present a comparative table summarizing our results alongside the previously known ones.
\begin{table}[ht]
    \centering
    \caption{Existence of Lie algebras $\mathcal{L}$ with $\operatorname{Der}(\mathcal{L}) = \operatorname{Inn}(\mathcal{L})$}
    \label{tab1}
    \begin{tabular}{|c|c|c|}
        \hline
        $\operatorname{Center}(\mathcal{L})$ & Perfect & Algebras with references \\ \hline
        
        \multirow{2}{*}{$0$} 
        & Yes & 
        \begin{tabular}[c]{@{}c@{}} 
        \cite{An-1988,ABBBP-1992,bena96} 
        \end{tabular} \\ \cline{2-3} 
        
        & No & 
        \begin{tabular}[c]{@{}c@{}} 
        Examples~\ref{exam4.4}, \ref{exam4.7} and Theorem~\ref{thm4.5}
        \end{tabular} \\ \hline
        
        $\neq 0$ 
        & Yes & 
        \begin{tabular}[c]{@{}c@{}} 
        Theorems~\ref{thm2.9},\ref{thm4.7} and \cite{Luk-70,Sato-71}
        \end{tabular} \\ \cline{2-3} 
        
        & No & 
        \begin{tabular}[c]{@{}c@{}} 
        Does not exist (see \cite[Theorem~1]{Togo-67})
        \end{tabular} \\ \hline
        
    \end{tabular}
\end{table}

\vspace{4pt}

{\bf Funding} {{\small{\ The first author was supported by Basic Research Program of Jiangsu (BK20251784).}}
 \\[3mm]
{\bf Data availability} {{\small{\ Data sharing not applicable to this article as no datasets were generated or analysed during
the current study.}}
\\[3mm]
{\bf Conflict of interest} {{\small{\ The authors have no competing interests to declare that are relevant to the content of this
article.}}

\end{document}